\titleformat{\subsubsection}[runin]
{\normalfont\normalsize\bfseries}{\thesubsubsection}{1em}{}
\numberwithin{equation}{section}
\newcommand{\ip}[1] {\langle #1 \rangle }
\newcommand{\inclu}[0] {\ar@{^{(}->}}
\newcommand{\R}{\mathbb{R}}
\newcommand{\RR}{\mathbb{R}}
\newcommand{\dom}{\text{dom }}
\newcommand{\epi}{\text{epi }}
\newcommand{\hypo}{\text{hypo }}
\newcommand{\interior}{\text{int }}
\newcommand{\argmin}{\operatornamewithlimits{argmin}}
\newcommand{\argmax}{\operatornamewithlimits{argmax}}
\newtheorem{thm}{Theorem}[section]
\newtheorem{theorem}{Theorem}[section]
\newtheorem{lemma}[thm]{Lemma}
\newtheorem{corollary}[thm]{Corollary}
\newtheorem{assumption}{Assumption}
\crefname{claim}{claim}{claims}
\Crefname{claim}{Claim}{Claims}
\crefname{lem}{lemma}{lemmas}
\Crefname{lem}{Lemma}{Lemmas}
\crefname{algorithm}{algorithm}{algorithms}
\Crefname{algorithm}{Algorithm}{Algorithms}
\theoremstyle{remark}
\newcommand{\varspace}{\mathcal{E}}
\newcommand{\defeq}{:=}
\newcommand{\gauge}[2]{\gamma_{#1, #2}}
\newcommand{\target}{\mathbb{R}_{++}}
\newcommand{\targetcl}{\overline{\mathbb{R}}_{++}}
\newcommand{\feasReg}{\mathcal{S}}
\newcommand{\ratioCons}{\rho}
\newcommand{\pMRM}{||\mbox{-}\mathtt{MRM}}
\newcommand{\set}[1]{\left\{ #1 \right\}}
\begin{document}

	\title{Scalable Projection-Free Optimization Methods\\ via MultiRadial Duality Theory}

	 \author{Thabo Samakhoana\footnote{Johns Hopkins University, Department of Applied Mathematics and Statistics, \url{tsamakh1@jhu.edu}} \qquad Benjamin Grimmer\footnote{Johns Hopkins University, Department of Applied Mathematics and Statistics, \url{grimmer@jhu.edu}}}

	\date{}
	\maketitle

	\begin{abstract}
            Recent works have developed new projection-free first-order methods based on utilizing linesearches and normal vector computations to maintain feasibility. These oracles can be cheaper than orthogonal projection or linear optimization subroutines but have the drawback of requiring a known strictly feasible point to do these linesearches with respect to. In this work, we develop new theory and algorithms which can operate using these cheaper linesearches while only requiring knowledge of points strictly satisfying each constraint separately. Convergence theory for several resulting ``multiradial'' gradient methods is established. We also provide preliminary numerics showing performance is essentially independent of how one selects the reference points for synthetic quadratically constrained quadratic programs.
	\end{abstract}

    \section{Introduction}
Recently, several works~\cite{Friedlander2014,Aravkin2018,Renegar2016,Renegar2019,Grimmer2017,radial2,Zakaria2022,Lu2023,liu2023gauges} have proposed new projection-free first-order methods based on often cheap linesearches and normal vector computations with the feasible region. Such methods offer potential advantages in terms of their scalability over projected methods and conditional gradient/Frank-Wolfe-type methods as reliances on quadratic or linear optimization oracles as subroutines are avoided. Prior works based on such potentially cheaper linesearches have required knowledge of a ``good enough'' strictly feasible point to use as a reference. In the line of work by Grimmer~\cite{Grimmer2017,radial1,radial2}, these methods are called radial methods as linesearches based at the origin amount to searching along rays at each iteration. In this work, we circumvent the previous reliance on a known ``good enough'' strictly feasible point by developing a new family of ``MultiRadial Methods''. These methods instead rely on a collection of reference points, each only required to be feasible to one component of the problem's constraints.

Our primary interest is in the development of methods for maximization problems
\begin{align} \label{eq:main}
    p^* = \begin{cases} \text{max } f(x) \\
    \text{s.t. } x \in S_j \quad \text{for all } j = 1, \dots,  m
    \end{cases}
\end{align}
with concave objective function $f: \varspace \rightarrow  \mathbb{R} \cup \{-\infty\}$ and closed convex constraint sets $S_j\subseteq \varspace$ for some finite dimensional Euclidean space $\varspace$. No assumptions like Lipschitz continuity of $f$ are made.
We focus on the development of first-order methods where $f$ can be accessed through its function value, its (sup)gradients, and one-dimensional linesearches. Mirroring these three operations, we will only assume access to the sets $S_j$ via checking membership, its normal vectors, and one-dimensional linesearches. 

Alternative commonly utilized oracle models for the constraint sets $S_j$ can incur higher per-iteration computational costs. Orthogonal projections, commonly used in projected gradient methods, require quadratic optimization over each $S_j$ (or worse $\cap S_j$), which requires $S_j$ to be sufficiently simple this can be done in closed-form (or quickly approximated). Frank-Wolfe-type methods only require linear optimization at each iteration, which is often cheaper than projections but may still be prohibitive. Interior point-type methods are applicable when a self-concordant barrier function for each $S_j$ is available but require linear systems solves based at each iteration.

Lagrangian-type methods apply when the constraints take the functional form of $S_j = \{ x \mid g_j(x) \leq 0\}$, relying on first-order oracles for and the structure of each $g_j$. If each $g_j$ is convex but nonsmooth, a range of subgradient-type methods can be applied~\cite{Polyak1967,Metel2021}. If each $g_j$ is smooth, nearly optimal accelerated methods have been recently developed by Zhang and Lan~\cite{zhang2022solving}. An important distinction should be drawn between using first-order evaluations of functional constraints $g_j$ and our model of linesearches and normal vectors of $S_j$. Our oracle is independent of how one represents the set $S_j$. In contrast, the above referenced methods for functionally constrained problems may require careful preprocessing of constraints to perform well, as, for example, replacing $g_j(x)\leq 0$ with any positive rescaling $\lambda g_j(x) \leq 0$ will change their algorithm's trajectory.

Here we develop algorithms that access each constraint set $S_j$ by linesearches and normal vector computations. As linesearches, given some $e_j \in \interior S_j$ and $x \not\in S_j$, we assume one can find the unique point on the boundary of $S_j$ between $e_j$ and $x$. Even if this cannot be done in closed form, given a membership oracle for $S_j$, bisection or a similar rootfinding procedure could be used to reach a machine precision solution. Once a boundary point is produced, we assume a normal vector can be computed, mirroring the role of computing (sub)gradients of the objective. These two operations correspond to function evaluation and subgradient evaluation of the gauge of $S_j$ with respect to $e_j$, defined as
$$ \gamma_{S_j,e_j}(x) = \inf\set{v > 0 \mid e_j + \frac{x - e_j}{v} \in S_j} \  . $$
(A formal introduction and discussion of gauges is deferred to Section~\ref{subsec: gauges and radial reformulations}.)

These two oracles are often much cheaper (and hence lead to more scalable algorithms) than common alternatives. For example, consider any ellipsoidal constraint $S_j = \{ x \mid \|A_j x - b_j\|_2 \leq 1 \}$. Here our assumed linesearch and normal vector can be cheaply computed with closed forms: the one-dimensional linesearch is directly given by the quadratic formula and a normal vector follows from one matrix multiplication with $A_j^TA_j$. In contrast, linear optimization, projections, and interior point method steps on ellipsoids all require at least solving a linear system.

A family of projection-free algorithms only utilizing these cheaper oracles was first developed by Renegar~\cite{Renegar2016,Renegar2019}. We introduce these ideas following their more general development as ``radial algorithms'' of Grimmer~\cite{radial1,radial2}. These methods reformulate~\eqref{eq:main} as the equivalent radially dual problem\footnote{Note this radial dual is fundamentally different from the similarly named gauge dual of Freund~\cite{Freund1987} as knowledge of oracles for related conjugate functions and polar sets are avoided in the radial dual formulation.}
\begin{equation}\label{eq:classic-radial-dual}
    \min_{y} \max_j\{ f^{\Gamma,e}(y), \gamma_{S_j,e}(y)\}
\end{equation}
provided $f(e) > 0$ and  $e\in \mathrm{int} \cap S_j$. Here $\gamma_{S,e}$ is the gauge of $S_j$ with respect to $e$ and $f^{\Gamma,e}$ is a nonlinear transformation of $f$ (again see Section~\ref{subsec: gauges and radial reformulations} for formal definitions). This reformulation is quite amenable to the application of first-order methods since (i) it is unconstrained minimization, facilitating the use of projection-free methods, (ii) it only interacts with the constraints $S_j$ through their gauges, enabling the use of often cheaper oracles, and (iii) it is uniformly Lipschitz continuous, removing the need to assume such structure. However, the applicability of prior radial algorithms based on solving~\eqref{eq:classic-radial-dual} is limited by the required knowledge of a common strictly feasible point $e$. Indeed, the Lipschitz continuity of~\eqref{eq:classic-radial-dual} depends on how interior $e$ is to $\cap S_j$. So, a ``good'' reference point is very much needed for prior methods to be effective.

\paragraph{Our Contributions}
 The primary contribution of this work is generalizing the duality between the primal problem~\eqref{eq:main} and radially dual problem~\eqref{eq:classic-radial-dual} preserving the benefits (i)-(iii) above while avoiding any usage of a common point $e$. Instead, we consider the MultiRadially Dual problem
\begin{equation}\label{eq:multicenter-radial-dual}
    \min_{y} \max_j\{ f^{\Gamma,e_0}(y), \gamma_{S_j,e_j}(y)\}
\end{equation}
which only relies on separate points $e_0$ with $f(e_0)>0$ and $e_j \in \mathrm{int\ } S_j$ for each constraint. More generally, we develop theory relating~\eqref{eq:main} to any problem of the form $\min_{y} \max\{ f^{\Gamma,e_0}(y), \varphi(y)\}$ where $\varphi:\varspace \to \R$ is a convex function ``identifying'' the feasible region $\cap S_j = \{x \in \varspace \mid \varphi(x) \leq 1\}.$

\begin{enumerate}
    \item {\bf MultiRadial Duality Theory} We develop theory relating the optimal solutions of the primal problem~\eqref{eq:main} to those of~\eqref{eq:multicenter-radial-dual}. Our Theorems~\ref{thm: dualUpperBoundIfPrimalAboveOne} and~\ref{thm: primalLowerBoundIfDualBelowOne} provide direct, algorithmically useful bounds relating the primal and multiradial dual optimal values, controlled by a natural geometric condition number. In the special case where $p^*=1$, these bounds become tight and our Theorem~\ref{thm: primialDualEqualityAtOne} shows both problems have exactly the same solution sets.
    \item {\bf MultiRadial Methods} Based on this theory, we design and analyze new scalable, projection-free ``MultiRadial Methods''. For nonLipschitz nonsmooth convex optimization, our Corollary~\ref{cor: subgradRateWith||-MRM} guarantees a MultiRadial Subgradient Method converges at the optimal $O(1/\varepsilon^2)$ rate up to a log term, with each iteration computing at most one subgradient of $f$ or one normal vector of a constraint.
    When the objective and constraint sets are smooth, our Corollaries~\ref{cor: smoothRateWith||-MRM} and~\ref{cor: genGradRateWith||-MRM} show accelerated MultiRadial Smoothing and Generalized Gradient Methods converge at rates $O(1/\varepsilon)$ and $O(1/\sqrt{\varepsilon})$ up to a log term, where the latter relies on more expensive per-iteration computations with respect to $m$.
\end{enumerate}

\paragraph{Example - Convex Quadratically Constrained Quadratic Programming (QCQPs)}
Throughout this work, we periodically utilize quadratic optimization problems as a concrete, classic model to illustrate results. In particular, consider a convex QCQP
\begin{equation}\label{eq:qcqp}
    p^* = \begin{cases}
        \max & f_0(x) := r_0 - q_0^Tx - \frac{1}{2} x^TP_0x \\
        \mathrm{s.t.} & f_j(x) := r_j - q_j^Tx - \frac{1}{2} x^TP_jx \geq 0 \quad \forall j=1\dots m \ .
    \end{cases}
\end{equation}
for any positive semidefinite matrices $P_j$ and $p^*>0$.

For convex QCQPs, one natural selection for $e_0$ is the maximizer of the objective $f(x)$, given by solving $P_0 e + q_0 = 0$. Similarly, a natural selection of $e_j$ would be any solution of $P_j e +q_j = 0$. Our approach applies for any selection of $e_j$'s with $f_j(e_j)>0$. In Section~\ref{sec:numerics}, we numerically observe that the typical numerical performance of our MultiRadial Methods tends to be independent of the choice of centers $e_j.$ Consequently, it may suffice to cheaply approximate a solution of $P_j e +q_j = 0$.

Supposing each $P_j$ is positive definite, these selections correspond to $e_j= -P_j^{-1} q_j$ for $j=0,\dots, m$. Then the multiradial dual problem~\eqref{eq:multicenter-radial-dual} of~\eqref{eq:qcqp} takes the form
\begin{equation}\label{eq:perfect-dual-qcqp}
    \min_{y} \max_{j=1\dots m}\left\{ \frac{1 + \sqrt{1 + 2f_0(e_0)(y-e_0)^TP_0(y-e_0)}}{2f_0(e_0)} , \sqrt{\frac{(y-e_j)^TP_j(y-e_j)}{2f_j(e_j)}}\right\}\ .
\end{equation}
More generally, for any positive semidefinite $P_j$ and any selection of $e_j$ with $f_j(e_j) >0 $, the multiradial dual problem~\eqref{eq:multicenter-radial-dual} of~\eqref{eq:qcqp} remains describable in closed form as
\begin{align}
    \min_{y} \max_{j=1\dots m}\Bigg\{ &\frac{1-\nabla f_0(e_0)^T(y-e_0) + \sqrt{(1-\nabla f_0(e_0)^T(y-e_0))^2 + 2f_0(e_0) (y-e_0)^T P_0(y-e_0)}}{2f_0(e_0)} , \nonumber\\
    &\frac{-\nabla f_j(e_j)^T(y-e_j) + \sqrt{(\nabla f_j(e_j)^T(y-e_j))^2+2f_j(e_j)(y-e_j)^TP_j(y-e_j)}}{2f_j(e_j)}\Bigg\}\ .\label{eq:dual-qcqp}
\end{align}
In either case, each component of the objective and its gradient can be computed via one matrix-vector multiplication. In this sense, we claim the resulting multiradial first-order methods are ``scalable'' as many existing alternatives require at least a linear system solve each iteration. The development of method's only relying on matrix-vector multiplication has been a recent trend in linear programming~\cite{SeanLP,eclipse,deng2023new,LinLP} and quadratic programming~\cite{SeanQP}.

\paragraph{Outline} Section~\ref{section: prelims} introduces needed preliminaries. Our theory in Section~\ref{sec:theory} relates our unconstrained ``multiradial'' reformulations to the original problem and discusses immediate algorithmic consequences. Subsequently, in Section~\ref{sec:parallel-MRM}, we develop a parameter-free method based on approximately solving (rescalings of) these multiradial problems. Preliminary numerical results are presented in Section~\ref{sec:numerics} for QCQPs, validating our theory and highlighting one area where performance scales better than our theory predicts.
    \section{Preliminaries}\label{section: prelims}
Our notations follow those of the initial development of radial duality~\cite{radial1,radial2}, specialized to the convex settings considered here. We consider any finite-dimensional Euclidean space $\varspace$ with a norm $\|\cdot\|$ induced by an inner product $\ip{\cdot, \cdot}$. To apply previous radial theory, we restrict to consider objective functions with values in the (extended) positive reals, which we denote by $\targetcl = \target \cup \set{0, \infty}$. Here, $\target$ is the set of positive real numbers and $0,\infty$ should be interpreted as the limit points of $\target$, playing a similar role to $\pm\infty$ for the real numbers.

Throughout, we will primarily consider extended positive valued functions $f:\varspace \to \target \cup \set{0, \infty}$. We claim this restriction is minor: for any real-valued objective $\tilde f\colon \varspace \rightarrow \mathbb{R}\cup\{\pm\infty\}$ to be maximized, one can equivalently maximize the extended positive valued function $f(x) \defeq \max\{\tilde{f}(x) - \tilde{f}(x_0) + 1, 0\}$ when given any $x_0\in\varspace$ with $f(x_0)\in\mathbb{R}.$ 
For any extended real-valued function $f$, its effective domain, epigraph, and hypograph are
\begin{align*}
    \dom f & \defeq \set{x \in \varspace \mid f(x) \in \target} \\
    \epi f & \defeq \set{(x, u) \in \varspace \times \target  \mid f(x) \leq u} \\
    \hypo f & \defeq \set{(x, u) \in \varspace \times \target  \mid f(x) \geq u},
\end{align*}
respectively. We denote the closure of $\dom f$ by $S_0.$ A function $f:\varspace \to \targetcl$ is concave (convex) if $\hypo f$ $(\epi f)$ is convex. We say $f$ is upper (lower) semicontinuous at $x \in \varspace$ if $\limsup_{x' \to x} f(x') = f(x)$ ($\liminf_{x' \to x} f(x') = f(x)$) and say $f$ is globally upper (lower) semicontinuous if this holds for all $x\in\varspace$. We abbreviate upper (lower) semicontinuity as u.s.c.~(l.s.c.) at times.

\paragraph{Normals, Subdifferentials, and Smoothness.}
The inner product on $\varspace$ induces one on $\varspace \times \RR$ defined by $\ip{(x, u), (x', u')} \defeq \ip{x, x'} + u\cdot u'.$ We use the same notation for both inner products as it will be clear from context which is being used. 
We say that a vector $\xi$ is normal to a set $S$ at $x$ if $\ip{\xi, x' - x} \leq 0$ for all $x'\in S$. The set of all normal vectors to $S$ at $x$ is denoted by $N_S(x).$ A vector $\zeta \in \varspace$ is a subgradient of convex function $f$ at $x \in \varspace$ if $(\zeta, -1) \in N_{\epi f}((x, f(x)).$ The set of all subgradients of $f$ at $x$ is denoted by $\partial f(x)$ and referred to as the subdifferential of $f$ at $x.$ We say $\zeta \in \varspace$ is a supgradient of a concave function $f$ at $x \in \varspace$ if $(-\zeta, 1) \in N_{\hypo f}((x, f(x)).$ 
If $f$ is continuously differentiable, these differentials are exactly the singleton $\{\nabla f(x)\}$.

We say a function $f:\varspace \to \RR$ is $M$-Lipschitz continuous if $|f(x) - f(y)| \leq M\|x - y\|$ for all $x, y \in \varspace$ and a continuously differentiable function $f$ is $L$-smooth if its gradient is $L$-Lipschitz continuous on its domain. We say a set $S$ is $\beta$-smooth if any two unit length normal vectors $\xi_i\in N_S(x_i)$ for $i\in \{1,2\}$ satisfy $\|\xi_1 - \xi_2\| \leq \beta\|x_1-x_2\|$. A more detailed discussion on smooth sets is given in~\cite{liu2023gauges}.



\subsection{Minkowski Gauges and Radial Reformulations}\label{subsec: gauges and radial reformulations}
For any set $S\subseteq\varspace$, we define its gauge with respect to some $e\in S$ as 
\begin{equation}\label{def:gauge_with_center}
    \gauge{S}{e}(x) \defeq \inf\set{v > 0 \mid e + \frac{x - e}{v} \in S} \ .
\end{equation}
When $e=0$, this is the Minkowski gauge, denoted by $\gamma_{S}(y) = \inf\{v >0 \mid e/v\in S\}$. Otherwise, $\gauge{S}{e}$ can be viewed as a translation of the Minkowski gauge $\gamma_{S - e}.$ Note if $S$ is convex and $e \in \text{int } S,$ then $\gauge{S}{e}$ is convex, continuous and finite everywhere. 

This gauge of a set has a close relationship to the following indicator function. Namely, consider the nonstandard indicator function $\hat\iota_{S} : \varspace \to \set{0, \infty}$ defined as
\begin{equation}\label{def:indicfunc}
    \hat\iota_{S}(x) \defeq \begin{cases}+\infty & \text{if } x \in S \\ 0 & \text{otherwise .} \end{cases}
\end{equation}
To relate these functions, observe that the hypograph of this indicator has a bijection to the epigraph of the gauge of a closed convex $S$ with respect to any $e\in\interior S$ of
\begin{equation}\label{def:BigGammaTransform}
    \Gamma_e(x, u) \defeq \left(e + \frac{x - e}{u}, \frac{1}{u}\right) \ .
\end{equation}
Namely,
\begin{equation}\label{eq: indicatorEpi_GaugeHypoDuality}
    \hypo \hat\iota_S = \Gamma_e(\epi \gauge{S}{e}).
\end{equation}
This ``radial transformation'' $\Gamma_e$ was introduced in~\cite{radial1}, fixing $e=0$.

The epigraph-hypograph bijection~\eqref{eq: indicatorEpi_GaugeHypoDuality} motivates the following radial function transformation of a generic function $f:\varspace \to \targetcl$ with $e \in \varspace$ as\footnote{If the set on the right of \eqref{def: upperRadialTransform} is empty, we set $f^{\Gamma, e}(y)= 0$ rather than $-\infty$ to ensure the transformed function also maps into the extended positive reals.}
\begin{equation}\label{def: upperRadialTransform}
    f^{\Gamma, e}(y) \defeq \sup\set{v > 0 \mid (y, v) \in \Gamma_e(\epi f)} \ .
\end{equation}
Intuitively, one can view $f^{\Gamma,e}$ as the smallest function whose hypograph contains $\Gamma_e(\epi f).$ When $f =  \gauge{S}{e}$ for a closed convex set $S$ with $e \in \text{int } S,$ this radial transformation exactly turns gauges into indicator functions $\hat\iota_{S}^{\Gamma, e} = \gauge{S}{e}$.
Moreover, one can verify the reverse holds as well, $\hat\iota_{S} = \gauge{S}{e}^{\Gamma, e}$. So this transformation provides a bijection between indicator and gauge functions.

Expanding the definitions of $\Gamma_e$ and $\epi f$, one has $f^{\Gamma, e}(y) = \sup\set{v > 0 \mid vf(e + \frac{y - e}{v}) \leq 1}.$ When $e=0$, we ease notation, writing $f^\Gamma = f^{\Gamma, 0}$. From this, it becomes clear that $f^{\Gamma, e} = (f \circ \mathfrak{t}_{e})^{\Gamma}\circ \mathfrak{t}_{-e}$ where $\mathfrak{t}_{e}(y) = e + y$ denotes a translation by $e$, and so this radial transformation is just a translation of those proposed by Grimmer~\cite{radial1,radial2}. In the following, we summarize their results relating $f$ to $f^{\Gamma}$ and $(f^{\Gamma})^{\Gamma}$, emphasizing that $f^{\Gamma}$ can be replaced with $f^{\Gamma,e}$ for $e \in \varspace$ by the simple translation argument noted above. For more exposition, we refer the reader to the relevant parts of \cite{radial1} and \cite{radial2}.

The duality between indicators and gauges of convex sets carries over more generally to a wide range of (potentially nonconvex) functions. 
In particular, we say $f$ is upper radial with respect to $e$ if the translated perspective function $f^{p, e}(x, v) = v f(e + \frac{x - e}{v})$ is upper semicontinuous and nondecreasing in $v>0$ for all fixed $x\in \varspace$.
Theorem 1 of~\cite{radial1} establishes that this condition exactly characterizes when the radial function transformation is dual: For any $e\in \varspace$,
\begin{equation}
    (f^{\Gamma,e})^{\Gamma,e} = f \text{ if and only if } f \text{ is upper radial with respect to } e \ .
\end{equation}
The condition that $f^{p, e}(x, \cdot)$ is nondecreasing for all $x \in \varspace$ is equivalent to $\hypo f$ being star-convex with respect to $(e, 0),$ cf. \cite[Lemma 1]{radial1}. This duality between functions extends to give a duality between optimization problems as for any such objective: Proposition 24 of~\cite{radial1} ensures
\begin{equation}\label{eq:radial_duality_optimality}
    \left(\text{argmax } f\right) \times \set{\max f} = \Gamma_e\left(\left(\text{argmin } f^{\Gamma, e}\right) \times \set{f^{\Gamma, e}}\right) \ .
\end{equation}

\paragraph{Structural Properties of Gauges and Radial Reformulations.}
This work is primarily concerned with concave objective functions $f$ being maximized over convex sets $S_j$, for which the above star-convexity condition is easily verified. In this case, we can ensure a strengthened version of upper radiality holds: when $f$ is upper radial with respect to $e$ and $f^{p, e}(x, \cdot)$ is strictly increasing on $\dom f^{p, e}(x, \cdot) \defeq \set{v > 0 \mid f^{p,e}(x, v) \in\mathbb{R}_{++}}$ for every $x \in \varspace,$ we say $f$ strictly upper radial with respect to $e$. 
Then, it follows that all functions and sets considered here are well behaved as
\begin{align}
    f \text{ is concave and u.s.c.} &\implies f \text{ is strictly upper radial w.r.t.~any } e \in \interior \dom f \label{implication: f_convexImples_fisUpperRadial}\\
    S \text{ is convex and closed} &\implies \hat\iota_S \text{ is strictly upper radial w.r.t.~any } e \in \interior S\ .\label{implication: S_convexImples_iota_SisUpperRadial}
\end{align}
Given a bound on how interior $e$ is to the domain of $f$ (or to the constraint set $S$), we can further guarantee the radial transformation (or gauge) with respect to $e$ is well behaved, i.e., convex and uniformly Lipschitz continuous. Denote the interior radius of $S$ with respect to $e$ and diameter by 
\begin{align*}
    R_e(S) &\defeq \inf\set{\|x - e\| \mid x \notin S}\\
    D(S) &\defeq \sup\set{\|x - y\| \mid x,y \in S}
\end{align*}
Then~\cite[ Proposition 17]{radial1} and~\cite[Proposition 1, Lemma 1]{radial2} ensure the following
\begin{align}
    f \text{ is concave, u.s.c., and } R_e(S_0) >0 &\implies f^{\Gamma,e} \text{ is convex and } 1/R_{e}(S_0) \text{-Lipschitz}\ ,\label{implication:RLipschitzOfConvexDual}\\
    S \text{ is convex, closed, and } R_e(S)>0 &\implies \gauge{S}{e} \text{ is convex and } 1/R_{e}(S)\text{-Lipschitz}\ \label{implication: GaugesAreLipschitz}
\end{align}
where $S_0 = \mathrm{cl\ dom\ }f$.
Hence, provided ``good'' interior points to the domain of $f$ and each constraint are known, their transformations will be well-behaved and conditioned\footnote{In the nonconvex development of these radial transformations of~\cite{radial2}, these $R$ constants are generalized to measure how star-convex the given function's hypograph is.}.
Moreover, when $f$ is $L$-smooth or $S$ is $\beta$-smooth, this structure is preserved. Namely~\cite[Proposion 2]{radial2} ensures for twice continuously differentiable $f$ with bounded domain, $f^{\Gamma,e}$ is $O(L)$-smooth and~\cite[Theorem 3.2]{liu2023gauges} ensures for $\beta$-smooth, compact $S$, $\gauge{S}{e}^2$ is $O(\beta)$-smooth. Both big-O statements above suppress constants depending on the geometric radius and diameter quantities above.

Finally, we note three calculus/computational results of interest to our development. The family of upper and strictly upper radial functions is closed under many common operations, see \cite[Propositions 12 and 13]{radial1}:
If $f$ is (strictly) upper radial with respect to $e$, then so is $\lambda f$ for all $\lambda>0$ and
\begin{equation}
    (\lambda f)^{\Gamma, e} = \frac{1}{\lambda}f^{\Gamma, e} \circ \mathfrak{t}_e \circ \lambda \mathfrak{t}_{-e} \ .
\end{equation}
If $f_1,f_2$ are both (strictly) upper radial with respect to $e$, then so is $\min\{f_1,f_2\}$ and
\begin{equation}
    (\min\set{f_1, f_2})^{\Gamma, e} = \max\set{f_1^{\Gamma, e}, f_2^{\Gamma, e}}. \label{eq: radTransformDistributesOnMax}
\end{equation}
For any $f$ that is strictly upper radial with respect to some $e$, the subgradients of $f^{\Gamma, e}$ are easily computed from those of $f$ as~\cite[Proposition 19]{radial1} ensures
\begin{equation}
    \partial f^{\Gamma, e}(y) = \set{\frac{\zeta}{\ip{(\zeta, \delta),(x - e, u)}} \mid (\zeta, \delta) \in N_{\hypo f}((x, u)), \ip{(\zeta, \delta),(x - e, u)} > 0}
\end{equation}
where $(x, u) = \Gamma_e((y, f^{\Gamma, e}(y))).$

\subsection{First-Order Methods Minimizing Finite Maximums}
Instead of directly solving the primal problem~\eqref{eq:main}, our proposed MultiRadial Methods will solve (a sequence of) unconstrained convex minimization problems of the form~\eqref{eq:multicenter-radial-dual}. These reformulations will always be minimizing a finite maximum of convex functions:
\begin{equation} \label{eq: basicFiniteMaxMinimization}
    h_\star = \min_x \max\{h_0(x),\dots, h_m(x)\} \ .
\end{equation}
Let $h(x)=\max\{h_0(x),\dots, h_m(x)\}$ denote the whole objective being minimized. Depending on the structure of $f$ and $S_j$ in~\eqref{eq:main}, the multiradial dual will have components $h_j$ that are either Lipschitz or smooth. Below we review three well-known families of first-order methods capable of minimizing such objectives: first, the subgradient method for nonsmooth settings, and then accelerated smoothing and generalized gradient methods for smooth settings with large or small values of $m$, respectively.

Each first-order method $\mathtt{fom}$ considered maintains a sequence of iterates $y_i$ defined by two (simple) procedures for initializing/restarting itself and for taking one step. We denote the initialization process by $y_0 = \mathtt{fom.initialize}(x,\varepsilon,h)$, where $x\in\varspace$ is an initial solution, $\varepsilon>0$ is a target accuracy, and $h$ is the objective to minimize. For momentum methods, this procedure may involve initializing auxiliary variable sequences as well. We denote taking one step of $\mathtt{fom}$ by $y_{i+1}=\mathtt{fom.step}(y_i,\varepsilon,h)$, although auxiliary variable sequences may be updated as well. The considered methods all have convergence guarantees of the following form: If $\|y_0-y^*\| \leq D$ for some minimizer $y^*$ of $h$, then 
\begin{equation} \label{eq: subroutine-convergence}
    \text{Some } i \leq K_{\mathtt{fom}}(D,\varepsilon,h) \text{ has } h(y_i)-h(y^*)\leq \varepsilon \ .
\end{equation}

\paragraph{The Subgradient Method} The subgradient method, dubbed $\mathtt{subgrad}$, initializes simply with $y_0=x_0$ and iterates
\begin{equation}\label{eq:subgradient-method}
    y_{i+1} = y_i - \varepsilon g_i/\|g_i\|^2, \qquad g_i\in\partial h(y_i) \ .
\end{equation}
Note a subgradient of $h(x_k)$ can be computed as any subgradient of some $h_j(x_k)$ attaining the finite maximum. Provided each $h_j$ is convex and $M$-Lipschitz, which implies $h$ is convex and $M$-Lipschitz, the convergence of this method is well studied, having
$ K_{\mathtt{subgrad}}(D,\varepsilon,h) = M^2D^2/\varepsilon^2. $

\paragraph{The (Accelerated) Smoothing Method} Supposing instead that each $h_j$ is $L$-smooth and $M$-Lipschitz, one can utilize the smoothing techniques of~\cite{Nesterov2005,Beck2012}. Given a target accuracy $\varepsilon>0$, one can approximate $h$ by
$ h_{\theta}(y) = \theta \log\left(\sum_{j=0}^m \exp\left(\frac{h_j(y)}{\theta}\right)\right)$
for $\theta = \frac{\varepsilon}{2\log(m+1)}$. One can verify $h_\theta$ has $|h_\theta - h|\leq \varepsilon/2$ and is $L_\theta = L+\frac{M^2}{\theta}$-smooth. Then one can apply any accelerated gradient method to minimize $h_\theta$. For example, Nesterov's accelerated method initialized with $z_0=y_0, t_0=\frac{-1 + \sqrt{5}}{2}$ iterates
\begin{equation}\label{eq:smoothing-method}
    \begin{cases}
    y_{i+1} = z_i - \frac{1}{L_\theta}\nabla h_\theta(z_i) \\
    z_{i+1}=y_{i+1}+\beta_i(y_{i+1}-y_i) 
    \end{cases}
\end{equation}
where $t_{i+1}^2= (1-t_i)t_i^2$ and $\beta_i = t_i(1-t_i)/(t^2_i+t_{i+1})$. We denote this method by $\mathtt{smooth}.$ Noting any $\varepsilon/2$-minimizer of $h_\theta$ is an $\varepsilon$-minimizer of $h$, the accelerated convergence of $2\sqrt{L_\theta D^2/\varepsilon}$ in \cite[Theorem 2.2.3 ]{nesterov-textbook} gives a guarantee of the form~\eqref{eq: subroutine-convergence}
$$ K_{\mathtt{smooth}}(D,\varepsilon,h) = 2\sqrt{\frac{2LD^2}{\varepsilon}+\frac{4M^2D^2\log(m+1)}{\varepsilon^2}} \ . $$
In our numerics, we will instead use the Universal Fast Gradient Method (UFGM) of Nesterov~\cite{Nesterov2015UniversalGM}, which avoids requiring knowledge of $L_\theta$.

\paragraph{The (Accelerated) Generalized Gradient Method}
If, in addition to being $L$-smooth, the number of terms in the finite maximum $m$ is relatively small, one can utilize the generalized gradient method as outlined in~\cite{nesterov-textbook}. This method works by utilizing the generalized gradient mapping defined as
$$ \mathcal{G}(y,\alpha) = \frac{1}{\alpha}\argmin_{y'}\left\{\max_{j=0,\dots, m}\left\{h_j(y) + g_j^T(y'-y)\right\} + \frac{1}{2\alpha}\|y'-y\|^2\right\},\ \quad  g_j\in\partial h_j(y), $$
and then applying any accelerated method with $\mathcal{G}(y,\alpha)$ replacing the gradient, which we dub $\mathtt{genGrad}$. 
Computing $\mathcal{G}(y,\alpha)$ corresponds to solving a quadratic program of dimension $m+1$. This limits the applicability of such methods to settings where this can be efficiently calculated, primarily being useful when $m$ is small. Theorem 2.3.5 of~\cite{nesterov-textbook} ensures this method has a convergence guarantee of the form~\eqref{eq: subroutine-convergence} with $K_{\mathtt{genGrad}}(D,\varepsilon,h) =  2\sqrt{LD^2/\varepsilon}$.









    \section{MultiRadial Theory and Idealized Methods}\label{sec:theory}
We begin by developing our multiradial duality theory relating generic constrained maximization problems~\eqref{eq:main} to the unconstrained multiradially dual problem~\eqref{eq:multicenter-radial-dual}. Throughout, we will discuss immediate algorithmic implications by analyzing resulting simple multiradial algorithms. In the following section, we will propose and analyze a more practical parameter-free multiradial method.

First, we introduce some notations to describe the primal and (multi)radial dual objectives of~\eqref{eq:main} and~\eqref{eq:multicenter-radial-dual}. Let $\feasReg \defeq \bigcap_{j=1}^mS_j$ denote the primal feasible region and $\Psi: \varspace \to \targetcl$ denote the primal function
\begin{equation}\label{eq: PsiDefinition}
    \Psi(x) \defeq \min\set{f(x), \hat{\iota}_{\feasReg}(x)} \ .
\end{equation}
Maximizing $\Psi(x)$ is exactly the original primal problem~\eqref{eq:main} provided some $x\in \feasReg \cap \dom f$ exists, so $p^* \defeq \max_{x \in \feasReg}f(x) = \max_{x \in \varspace}\Psi(x).$ For any $e \in \interior (\feasReg \cap \dom f)$,
\begin{equation}\label{eq: common_center_dual}
    \Psi^{\Gamma, e} = \max\set{f^{\Gamma, e}, \gauge{\feasReg}{e}}
\end{equation}
by equation~\eqref{eq: radTransformDistributesOnMax} and the fact that $\gauge{\feasReg}{e} = \hat{\iota}_{\feasReg}^{\Gamma, e}.$
Thus the following duality relation holds
\begin{equation}\label{eq: common_center_solutions_duality}
    \left(\text{argmax } \Psi\right) \times \set{\max \Psi} = \Gamma_e\left(\left(\text{argmin } \Psi^{\Gamma, e}\right) \times \set{\min \Psi^{\Gamma, e}}\right) \ 
\end{equation}
by~\eqref{eq:radial_duality_optimality}. Requiring a point $e$ interior to every constraint is a notable limitation to the design of algorithms based on this relation. We address this by relaxing the dual objective function $\Psi^{\Gamma,e}.$ We instead consider the following dual function
\begin{equation}\label{eq:DualFunction}
    \Phi(y) = \max\{ f^{\Gamma,e_0}(y), \varphi_{\feasReg}(y)\}
\end{equation}
where $\varphi_{\feasReg}: \varspace \to \R$ is a l.s.c.~convex function satisfying $\interior \feasReg = \{x \in \varspace \mid \varphi_{\feasReg}(x) < 1\}.$ We call any such $\varphi_{\feasReg}$ a \emph{convex identifier} of $\feasReg.$ Based on equation~\eqref{eq: common_center_dual}, a natural choice for an identifier is $\varphi_{\feasReg} = \max\{\gamma_{S_1,e_1}, \dots, \gamma_{S_m,e_m}\}$ where $e_j \in \interior S_j$ for all $j.$ This particular $\varphi_{\feasReg}$ enables us to replace $e \in \interior (\feasReg \cap \dom f)$ with separate reference points for each functional component of the primal objective~\eqref{eq: PsiDefinition}. For this reason, we call $\Phi$ in~\eqref{eq:DualFunction} the multiradial dual function. We will at times refer to  $\max\{\gamma_{S_1,e_1}, \dots, \gamma_{S_m,e_m}\}$ as the canonical $\varphi_{\feasReg}$ and we encourage the reader to keep it as a concrete example of a convex identifier.


The primal problem and (multiradial) dual problem are then given by
\begin{equation}\label{eq: PrimalProblemWithPhi}
    p^* \defeq \max_{x \in \varspace} \Psi(x)
\end{equation}
\begin{equation}\label{eq: DualproblemWithPhi}
    d^* \defeq \min_{y \in \varspace}\Phi(y) \ .
\end{equation}
We will show that, under suitable assumptions, $\Phi$ is indeed an appropriate replacement to the radial dual $\Psi^{\Gamma, e}$ given by using a single reference point. A condition analogous to~\eqref{eq: common_center_solutions_duality} is derived in Theorem~\ref{thm: primialDualEqualityAtOne} in a restricted case, with general relationships being given in Theorems~\ref{thm: dualUpperBoundIfPrimalAboveOne} and~\ref{thm: primalLowerBoundIfDualBelowOne}. Note that with the canonical $\varphi_{\feasReg},$ the multiradial dual problem is an unconstrained, convex, uniformly Lipschitz minimization problem (and thus remains amenable to the direct application of many first-order methods).

Our theory relies on four assumptions, ensuring~\eqref{eq:main} is concave maximization with a maximizer and a Slater point, and that $\varphi_{\feasReg}$ and $f^{\Gamma,e_0}$ are well defined.
\begin{assumption}\label{assmpn: assumption1} 
$f$ is concave and u.s.c.~with bounded zero super-level set
$$
D_0 \defeq D(S_0) < \infty.
$$
\end{assumption}
\begin{assumption}\label{assmpn: assumption2}
    The constraint sets $S_1, \dots, S_m$ are convex and closed.
\end{assumption}
\begin{assumption}\label{assmpn: assumption3}
    A convex identifier $\varphi_{\feasReg}$ is known and a point $e_0 \in \interior S_0$ is known with
    \begin{align*}
        R_0 \defeq R_{e_0}(S_0) > 0.
    \end{align*}
\end{assumption}
\begin{assumption}\label{assmpn: assumption4}
    There exists $x^* \in \feasReg$ with $f(x^*) = p^* > 0$ and $x_{SL} \in \interior \feasReg \cap \dom f$ such that 
    $$
    \eta \defeq (1 - \gauge{S_0}{e_0}(x^*))(1 - \varphi_{\feasReg}(x_{SL})) > 0.
    $$
\end{assumption}

A few notes on these conditions. Firstly, under Assumptions~\ref{assmpn: assumption1} and~\ref{assmpn: assumption2}, Assumption~\ref{assmpn: assumption3} is satisfied if points $e_j \in \interior S_j$ are known for each $j = 0, 1, \dots, m.$ In this case, with $R \defeq \min\{R_{e_j}(S_j) \mid j = 0, 1, \dots, m\},$ $\Phi$ with the canonical $\varphi_{\feasReg}$ is $1/R$-Lipschitz continuous. Note the multiradial reformulation $\Phi$ can have a better Lipschitz constant than the radial dual~\eqref{eq:classic-radial-dual} relying on knowing a single $e \in \interior \cap_{j=0}^m S_m$ which is $1/R_e(\cap_{j=0}^m S_j)$-Lipschitz.
We leave the possibility of extending our optimality relationships between the primal and multiradial dual to nonconvex optimization to future works. Doing so would likely rely on replacing concavity assumptions by strictly upper radiality as done in~\cite{radial1}. However, such nonconvex problems are beyond the scope of the algorithms and analysis considered herein. Lastly, note that $x_{SL}$ will never be assumed to be known; it is only used in our analysis.

\subsection{Exact MultiRadial Dual Optimality Relationships}
These four assumptions suffice to show our primal and multiradially dual optimization problems are closely related. Our first result to this end is Theorem \ref{thm: primialDualEqualityAtOne}, which states that the two problems are equivalent when the optimal objective value is one, mirroring~\eqref{eq: common_center_solutions_duality}. This theorem is proved in Section~\ref{subsubsec: third_MRD_thm_proof}.
\begin{theorem}\label{thm: primialDualEqualityAtOne}
Under assumptions \ref{assmpn: assumption1} - \ref{assmpn: assumption4}, if $p^* = 1$ or $d^* = 1,$ then
$$
\argmax \Psi \times \set{p^*} = \argmin \Phi \times \set{d^*} \ .
$$
\end{theorem}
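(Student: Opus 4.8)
The plan is to reduce the statement to an identity between the value-one slices of the primal function $\Psi$ and the dual function $\Phi$, exploiting that $\Gamma_{e_0}$ restricts to the identity on the slice at height one (indeed $\Gamma_{e_0}(y,1)=(y,1)$). The first step is to build a ``level-one dictionary''. For the constraint part: since $\varphi_{\feasReg}$ is a convex identifier and Assumption~\ref{assmpn: assumption4} forces $\interior\feasReg\ne\emptyset$, the strict sublevel set $\{\varphi_{\feasReg}<1\}=\interior\feasReg$ has closure $\{\varphi_{\feasReg}\le 1\}$ (a standard fact for l.s.c.\ convex functions with a nonempty strict sublevel set), and $\mathrm{cl}\,\interior\feasReg=\feasReg$, so $\{\varphi_{\feasReg}\le 1\}=\feasReg$. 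For the objective part: $f$ is strictly upper radial with respect to $e_0$ by~\eqref{implication: f_convexImples_fisUpperRadial}, so $(f^{\Gamma,e_0})^{\Gamma,e_0}=f$, and evaluating the radial bijection relating $\hypo f^{\Gamma,e_0}$ and $\Gamma_{e_0}(\epi f)$ (together with its symmetric counterpart relating $\epi f^{\Gamma,e_0}$ and $\Gamma_{e_0}(\hypo f)$, obtained from the involution) along the slice $\varspace\times\{1\}$ gives $f^{\Gamma,e_0}(y)\le 1\iff f(y)\ge 1$ and $f^{\Gamma,e_0}(y)<1\iff f(y)>1$. Since $\Psi=\min\{f,\hat{\iota}_{\feasReg}\}$ and $\Phi=\max\{f^{\Gamma,e_0},\varphi_{\feasReg}\}$, this yields
\begin{equation*}
\{\,y:\Phi(y)\le 1\,\}=\{\,y:\Psi(y)\ge 1\,\}\qquad\text{and}\qquad\{\,y:\Phi(y)<1\,\}=\{\,y:f(y)>1\,\}\cap\interior\feasReg .
\end{equation*}

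The second step reconciles the optimal values, proving $p^*=1\iff d^*=1$. If $p^*=1$, then $\feasReg\subseteq\{f\le 1\}$, so the second identity gives $\{\Phi<1\}=\emptyset$, i.e.\ $d^*\ge 1$; moreover $\argmax\Psi=\{\Psi\ge 1\}$ (as $\Psi\le p^*=1$) is nonempty by Assumption~\ref{assmpn: assumption4}, so the first identity makes $\{\Phi\le 1\}\ne\emptyset$, i.e.\ $d^*\le 1$; hence $d^*=1$. If instead $d^*=1$, then $\Phi\ge 1$ everywhere, so the second identity forces $\{f>1\}\cap\interior\feasReg=\emptyset$; interpolating a maximizer $x^*$ with the Slater point $x_{SL}$ along $x_t=(1-t)x^*+tx_{SL}\in\interior\feasReg$ and using concavity ($f(x_t)\to p^*$ as $t\downarrow 0$) gives $p^*\le 1$, while $p^*\ge 1$ follows either from Theorem~\ref{thm: primalLowerBoundIfDualBelowOne} or directly: by the dictionary $\{f^{\Gamma,e_0}\le 1\}=\{f\ge 1\}$ is bounded (Assumption~\ref{assmpn: assumption1}) and closed, hence compact, so if this set were disjoint from the closed set $\feasReg$ then a shrinking-sublevel-set argument, combined with lower semicontinuity of $\varphi_{\feasReg}$ on the compact sets $\{f^{\Gamma,e_0}\le 1+\varepsilon\}$, would force $\inf\Phi>1$, contradicting $d^*=1$.

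With $p^*=d^*=1$ in hand in either case, the conclusion is immediate: $\argmax\Psi=\{\Psi\ge 1\}=\{\Phi\le 1\}=\{\Phi=d^*\}=\argmin\Phi$, using $\Psi\le p^*=1$ for the first equality, the first set identity for the second, and $\Phi\ge d^*=1$ for the third. Appending the common optimal value $\{1\}$ gives $\argmax\Psi\times\{p^*\}=\argmin\Phi\times\{d^*\}$, and since $\Gamma_{e_0}$ is the identity on this slice this has exactly the shape of~\eqref{eq:radial_duality_optimality} and~\eqref{eq: common_center_solutions_duality} with no transformation left to apply. (Alternatively, the argmax--argmin identification can be routed through the common-center relation~\eqref{eq: common_center_solutions_duality}, since Assumptions~\ref{assmpn: assumption1}--\ref{assmpn: assumption4} supply a point of $\interior(\feasReg\cap\dom f)$.)

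I expect the main obstacle to be the objective half of the level-one dictionary, $f^{\Gamma,e_0}(y)\le 1\iff f(y)\ge 1$ and its strict counterpart: establishing it cleanly requires the $\Gamma_{e_0}$ epigraph/hypograph bijection at the generality of (strictly) upper radial functions rather than merely for gauges and indicators, along with enough semicontinuity bookkeeping to know the supremum defining $f^{\Gamma,e_0}$ in~\eqref{def: upperRadialTransform} behaves correctly exactly at the threshold value $1$ (this is where concavity and upper semicontinuity of $f$, not just upper radiality, are used). A secondary subtlety is the implication $d^*=1\Rightarrow p^*\ge 1$ without a priori attainment of the dual infimum, which is precisely why boundedness of $S_0$ from Assumption~\ref{assmpn: assumption1} (or Theorem~\ref{thm: primalLowerBoundIfDualBelowOne}) enters.
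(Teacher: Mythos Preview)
Your proposal is correct and takes a genuinely different, more self-contained route than the paper.

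The paper's proof is extremely short because it treats Theorems~\ref{thm: dualUpperBoundIfPrimalAboveOne} and~\ref{thm: primalLowerBoundIfDualBelowOne} as black boxes: those quantitative inequalities immediately yield $p^*=1\iff d^*=1$, and the argmax/argmin equality is then checked element by element using (i) strict upper radiality for $f(x^*)=1\Rightarrow f^{\Gamma,e_0}(x^*)=1$, (ii) the (implicitly used) identity $\{\varphi_{\feasReg}\le 1\}=\feasReg$, and (iii) the second clause of Theorem~\ref{thm: primalLowerBoundIfDualBelowOne} for $f^{\Gamma,e_0}(y^*)\le 1\Rightarrow f(y^*)\ge 1$. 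Your approach instead packages exactly these ingredients into an explicit ``level-one dictionary'' $\{\Phi\le 1\}=\{\Psi\ge 1\}$ (plus the strict version) and derives both the value equality and the solution-set equality from it, without invoking the quantitative bounds. What you gain is a proof that is essentially independent of Theorems~\ref{thm: dualUpperBoundIfPrimalAboveOne}--\ref{thm: primalLowerBoundIfDualBelowOne} and that explains \emph{why} the result holds at the threshold $1$ (namely, $\Gamma_{e_0}$ fixes the slice $\varspace\times\{1\}$); what the paper gains is brevity, since its heavy lifting is already done. Note that the facts you isolate---in particular $\{\varphi_{\feasReg}\le 1\}=\feasReg$ via closure of the strict sublevel set, and $f^{\Gamma,e_0}(y)\le 1\Rightarrow f(y)\ge 1$---are precisely what the paper uses but leaves implicit or routes through Theorem~\ref{thm: primalLowerBoundIfDualBelowOne}; your direct verification of the latter from concavity and $f(e_0)>0$ is cleaner than appealing to the full strength of that theorem. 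The one place where your self-contained argument is slightly delicate is $d^*=1\Rightarrow p^*\ge 1$ without dual attainment, and you correctly flag that either boundedness of $S_0$ or Theorem~\ref{thm: primalLowerBoundIfDualBelowOne} closes it.
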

Problems with any $p^* > 0$ (not necessarily one) are still amenable to the application of this result by considering the rescaled primal function $\Psi_{\tau}$ and its multiradially dual function $\Phi_{\tau},$ given by
\begin{align}
    \Psi_{\tau}(x) &= \min\set{\tau f(x), \hat{\iota}_{\feasReg}(x)} \\
    \Phi_{\tau}(y) &= \max\{ (\tau f)^{\Gamma,e_0}(y), \varphi_\feasReg(y)\}
\end{align} 
for $\tau > 0.$ We let $p(\tau)$ and $d(\tau)$ respectively denote
\begin{align}
    p(\tau) \defeq \max_{x \in \varspace}\Psi_{\tau}(x) \\
    d(\tau) \defeq \min_{y \in \varspace}\Phi_{\tau}(y) \ .
\end{align}
By Theorem~\ref{thm: primialDualEqualityAtOne}, $p(\tau) = d(\tau) = 1$ whenever $\tau = \frac{1}{p^*}$ and these problems have the same set of solutions. Since $\argmax \Psi = \argmax \Psi_{\tau},$ the following duality relation holds.
\begin{equation} \label{eq:equal-optimizers}
    \argmax \Psi = \argmin \Phi_{1/ p^*} \ .
\end{equation}

For algorithmic purposes, requiring knowledge of $p^*$ is often prohibitive. As one example where such results are relevant, consider any minimization problem where strong duality holds. Then, minimizing the duality gap has a known optimal value, zero. To be concrete, consider a generic conic program over a closed convex cone $\mathcal{K}$ with dual cone $\mathcal{K}^*$ where the primal problem
minimizes $\langle c,x\rangle$ subject to $Ax=b$ and $x\in \mathcal{K}$ and the dual problem maximizes $\langle b,y\rangle$ subject to $c - A^*y \in \mathcal{K}^*$. Then one can formulate seeking optimal primal-dual solutions as the following problem with $p^*=1$
$$ 1=\begin{cases}
    \max & 1 + \langle b,y\rangle - \langle c,x\rangle\\
    \mathrm{s.t.} &Ax=b\\
    & x\in \mathcal{K}\\
    &c - A^*y \in \mathcal{K}^* \ . 
\end{cases} $$

\subsubsection{A Simple Method when the Optimal Value is Known} When $p^*$ is known and positive,~\eqref{eq:equal-optimizers} provides an alternative means to compute an approximate maximizer of the original problem. Given an initial point $x_0 \in \varspace$ and a given target accuracy $\varepsilon > 0,$ one could iterate
\begin{equation}\label{alg: theMRMwhenOptIsKnown}
    \begin{cases}
    y_0 = \mathtt{fom.initialize}(x_0, \varepsilon, \Phi_{1/p^*}) \\
    y_{i+1} = \mathtt{fom.step}(y_i, \varepsilon, \Phi_{1/p^*})
    \end{cases}.
\end{equation}
Guarantees on this scheme's convergence directly follow from the convergence rate $K_\mathtt{fom}(\cdot)$ of the given first-order method. The following theorem formalizes the primal objective gap and feasibility convergence of the above multiradial dual iterates $y_i$.
\begin{theorem}
    Under Assumptions~\ref{assmpn: assumption1}~-~\ref{assmpn: assumption4}, the points $z_i = e_0 + \frac{y_i - e_0}{\Phi_{1/p^*}(y_0)},$ where $y_i$ is the sequence \eqref{alg: theMRMwhenOptIsKnown}, have
    $$
    \frac{p^* - f(z_i)}{p^*} \leq \varepsilon \quad \text{and} \quad \inf_{x \in S_0 \cap \feasReg} \|z_i - x\| \leq \left[\frac{\varphi_\feasReg(e_0)D_0}{1 - \varphi_\feasReg(x_{SL})}\right]\varepsilon
    $$
    for some $i \leq K_{\mathtt{fom}}(\|x_0 - x^*\|, \varepsilon, \Phi_{1/p^*})$.
\end{theorem}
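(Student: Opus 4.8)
The plan is to convert the dual-side accuracy guarantee~\eqref{eq: subroutine-convergence} for the first-order method applied to $\Phi_{1/p^*}$ into the two claimed primal-side statements via the radial transformation $\Gamma_{e_0}$. By~\eqref{eq:equal-optimizers} we know $\argmax \Psi = \argmin \Phi_{1/p^*}$ and, by Theorem~\ref{thm: primialDualEqualityAtOne} applied to the rescaled problem, $d(1/p^*) = 1$. So if $x^* \in \argmax \Psi$, then $\Gamma_{e_0}^{-1}$ maps $(x^*, p^*/p^* ) = (x^*,1)$ to a minimizer $y^*$ of $\Phi_{1/p^*}$ with $\Phi_{1/p^*}(y^*) = 1$; concretely $y^* = e_0 + (x^* - e_0)\cdot 1$, i.e.\ $y^* = x^*$ when the ``height'' is $1$. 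In particular $\|y_0 - y^*\| = \|x_0 - x^*\|$, which is why the horizon $K_{\mathtt{fom}}$ is stated with that argument. The convergence guarantee then yields some $i \le K_{\mathtt{fom}}(\|x_0-x^*\|,\varepsilon,\Phi_{1/p^*})$ with $\Phi_{1/p^*}(y_i) - 1 \le \varepsilon$, i.e.\ $\Phi_{1/p^*}(y_i) \le 1 + \varepsilon$, and also $\Phi_{1/p^*}(y_0) \ge 1$ (since $1 = d(1/p^*) \le \Phi_{1/p^*}(y_0)$).

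For the objective gap, first observe $\Phi_{1/p^*}(y) = \max\{(\tau f)^{\Gamma,e_0}(y), \varphi_\feasReg(y)\}$ with $\tau = 1/p^*$, so $(\tau f)^{\Gamma,e_0}(y_i) \le \Phi_{1/p^*}(y_i) \le 1+\varepsilon$. Now unpack the definition of the radial transform: $(\tau f)^{\Gamma,e_0}(y) = \sup\{v>0 : v\cdot \tau f(e_0 + (y-e_0)/v) \le 1\}$. Setting $v = \Phi_{1/p^*}(y_0) \ge 1$ and using that this $v$ is at least $(\tau f)^{\Gamma,e_0}(y_i)$ whenever... — more carefully, the point $z_i = e_0 + (y_i - e_0)/\Phi_{1/p^*}(y_0)$ is exactly the first coordinate of $\Gamma_{e_0}(y_i, \Phi_{1/p^*}(y_0))$, and since $\Phi_{1/p^*}(y_0) \ge (\tau f)^{\Gamma,e_0}(y_i)$ and the perspective of $\tau f$ is nondecreasing in $v$, we get $\Phi_{1/p^*}(y_0)\cdot \tau f(z_i) \le 1$ is \emph{not} quite what we want; instead we use that $v \mapsto v\tau f(e_0+(y_i-e_0)/v)$ evaluated at $v = (\tau f)^{\Gamma,e_0}(y_i) \le 1+\varepsilon$ is $\le 1$ together with monotonicity to lower-bound $\tau f(z_i)$. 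Concretely this gives $\tau f(z_i) \ge 1/\Phi_{1/p^*}(y_0) \ge 1/(\text{something} \le 1+\varepsilon)$, hence $f(z_i)/p^* = \tau f(z_i) \ge 1/(1+\varepsilon) \ge 1 - \varepsilon$, i.e.\ $(p^* - f(z_i))/p^* \le \varepsilon$. I expect pinning down this monotonicity/perspective manipulation — making sure one uses $\Phi_{1/p^*}(y_0)$ versus $(\tau f)^{\Gamma,e_0}(y_i)$ in the right places and that $z_i$ lies in $S_0$ so $f$ is finite there — to be the main obstacle, and the cleanest route is to invoke the definition~\eqref{def: upperRadialTransform} of $(\cdot)^{\Gamma,e_0}$ directly rather than reasoning geometrically.

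For the feasibility bound, the same value $\varphi_\feasReg(y_i) \le \Phi_{1/p^*}(y_i) \le 1+\varepsilon$ controls how far $y_i$ is from $\feasReg$, and scaling toward $e_0$ by factor $1/\Phi_{1/p^*}(y_0)$ brings $z_i$ closer. Since $\varphi_\feasReg$ is convex with $\interior \feasReg = \{\varphi_\feasReg < 1\}$, and $\varphi_\feasReg(e_0) < 1$... — actually $e_0$ need only be interior to $S_0$, so I would instead use the Slater point $x_{SL}$ with $\varphi_\feasReg(x_{SL}) < 1$: writing $z_i$ as (approximately) a convex combination of a point with $\varphi_\feasReg \le 1+\varepsilon$ and $e_0$, convexity of $\varphi_\feasReg$ gives $\varphi_\feasReg(z_i) \le 1 + O(\varepsilon)$, and then moving along the segment from $z_i$ toward $x_{SL}$ a distance proportional to $\varepsilon/(1-\varphi_\feasReg(x_{SL}))$ reaches $\feasReg$; the factor $D_0$ enters because displacements within $S_0$ are bounded by $D_0 = D(S_0)$ and the step length is $\|z_i - x_{SL}\| \le D_0$ times the excess $\varphi_\feasReg(z_i)-1$ over the margin $1 - \varphi_\feasReg(x_{SL})$. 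Combining, $\inf_{x \in S_0 \cap \feasReg}\|z_i - x\| \le \frac{\varphi_\feasReg(e_0)D_0}{1-\varphi_\feasReg(x_{SL})}\varepsilon$ after tracking the constant $\varphi_\feasReg(e_0)$ that arises from the scaling step. The main obstacle here is bookkeeping the convex-combination coefficients so the final constant matches exactly; I would set it up by first bounding $\varphi_\feasReg(z_i)$ and then performing a single ``retraction'' step toward $x_{SL}$, being careful that the retracted point stays in $S_0 \cap \feasReg$.
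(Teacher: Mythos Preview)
Your overall strategy is the same as the paper's: use the guarantee~\eqref{eq: subroutine-convergence} to get $\Phi_{1/p^*}(y_i)\le 1+\varepsilon$ for some $i$, then convert this into the objective bound via the definition of the radial transform, and into the feasibility bound via a retraction toward the Slater point. Your feasibility argument is essentially a sketch of Lemma~\ref{lem: dualHasBoundedLevelSets}, which is exactly what the paper invokes.

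The genuine gap is in your objective-gap step, and it stems from the denominator $\Phi_{1/p^*}(y_0)$ in the definition of $z_i$. You assert $\Phi_{1/p^*}(y_0)\ge(\tau f)^{\Gamma,e_0}(y_i)$ and then that $\Phi_{1/p^*}(y_0)$ is ``something $\le 1+\varepsilon$'', giving $\tau f(z_i)\ge 1/(1+\varepsilon)$. Neither inequality is available: nothing bounds $\Phi_{1/p^*}(y_0)=\Phi_{1/p^*}(x_0)$ from above, and nothing relates $\Phi_{1/p^*}(y_0)$ to $(\tau f)^{\Gamma,e_0}(y_i)$ in general (for the subgradient method, say, $y_i$ need not improve on $y_0$). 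The paper's own proof does not use $\Phi_{1/p^*}(y_0)$ at all: it writes
\[
\tfrac{1}{p^*}f(z_i)\ \ge\ \limsup_{v\searrow \Phi_{1/p^*}(y_i)}\tfrac{1}{p^*}f\!\left(e_0+\tfrac{y_i-e_0}{v}\right)\ \ge\ \tfrac{1}{\Phi_{1/p^*}(y_i)}\ \ge\ 1-\varepsilon,
\]
using upper semicontinuity for the first inequality and the fact \eqref{ineq: fGammaBoundsDomainGauge} (i.e.\ $v\ge(\tau f)^{\Gamma,e_0}(y_i)\Rightarrow \tau f(e_0+(y_i-e_0)/v)\ge 1/v$) for the second. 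This chain only makes sense if $z_i=e_0+(y_i-e_0)/\Phi_{1/p^*}(y_i)$, so the $y_0$ in the stated $z_i$ is almost certainly a typo for $y_i$; Lemma~\ref{lem: dualHasBoundedLevelSets} (which provides the feasibility bound with $y_\varepsilon=e_0+(y-e_0)/(1+\varepsilon)$ for $\Phi(y)\le 1+\varepsilon$) confirms this reading. With that correction, drop the intermediate $\Phi_{1/p^*}(y_0)$ entirely, set $v=\Phi_{1/p^*}(y_i)\ge(\tau f)^{\Gamma,e_0}(y_i)$, and the bound follows in one line.
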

\begin{proof}
    Note some $i \leq K_{\mathtt{fom}}(\|x_0 - x^*\|, \varepsilon, \Phi_{1/p^*})$ must have $0 \leq \Phi_{1/p^*}(y_i) - 1 \leq \varepsilon.$ The claimed objective bound on the corresponding $z_i$ follows as
    \begin{align*}
        \frac{1}{p^*}f(z_i) &\geq \limsup_{v\searrow \Phi_{1/p^*}(y_i)} \frac{1}{p^*}f\left(e_0 + \frac{y_i - e_0}{v}\right)
         \geq \frac{1}{\Phi_{1/p^*}(y_i)}
         = 1 - \frac{\Phi_{1/p^*}(y_i) -1}{\Phi_{1/p^*}(y_i)} \geq 1 - \frac{\varepsilon}{\Phi_{1/p^*}(y_i)}
    \end{align*}
    where first inequality uses upper semicontinuity, the second uses the definition of $(f/p^*)^{\Gamma,e_0}$, and the third uses that $y_i$ is an $\varepsilon$-minimizer.
    The proof of our feasibility bound is deferred to Lemma~\ref{lem: dualHasBoundedLevelSets} showing $\inf_{x \in S_0 \cap \feasReg} \|z_i - x\| \leq \left[\frac{\varphi_\feasReg(e_0)D_0}{1 - \varphi_\feasReg(x_{SL})}\right]\frac{\Phi_{1/p^*}(y_i) - 1}{\Phi_{1/p^*}(y_i)}$.
\end{proof}

For example, consider the convex identifier $\varphi_S = \max\{\gamma_{S_j,e_j}\}$ as the maximum of the gauges of the constraint sets $S_j$ with respect to $e_j.$ Noting each gauge is $1/R_{e_j}(S_j)$-Lipschitz, the corresponding multiradial problem is $1/R$-Lipschitz where $R = \min_{j=0,\dots, m} R_{e_j}(S_j)$. Consequently, a multiradial subgradient method (that is, using the subgradient method~\eqref{eq:subgradient-method} in the multiradial method~\eqref{alg: theMRMwhenOptIsKnown}) requires at most
$$K_{\mathtt{subgrad}}(\|x_0 - x^*\|^2, \varepsilon, \Phi_{1/p^*}) = \frac{\|x_0 - x^*\|^2}{R^2\varepsilon^2} $$
iterations to produce some point with $\frac{p^* - f(z_i)}{p^*}\leq \varepsilon$ and $\inf_{x \in S_0 \cap \feasReg} \|z_i - x\| \leq \left[\frac{\varphi_\feasReg(e_0)D_0}{1 - \varphi_\feasReg(x_{SL})}\right]\varepsilon.$ Note this result is in line with prior radial subgradient method guarantees~\cite{Grimmer2017}, avoiding reliance on Lipschitz constant assumptions and instead only depending on ``geometric'' radius and diameter-type constants. Unlike these prior methods, a common $e\in\interior \cap S_j$ is not needed and as previously noted, the value of $R$ may be strictly larger.

\subsection{General MultiRadial Dual Optimality Relationships}
In the remainder of this section, we consider the relationship between $p^*$ and $d^*$ when $p^*$ is unknown, so simply rescaling the objective to have optimal value one beforehand is not doable. Our Theorems~\ref{thm: dualUpperBoundIfPrimalAboveOne} and \ref{thm: primalLowerBoundIfDualBelowOne} bound the absolute and relative distance of $p^*$ and $d^*$ from the value one in terms of each other. These theorems are proved in Section~\ref{subsec: MRD_theory_proofs}
\begin{theorem}\label{thm: dualUpperBoundIfPrimalAboveOne}
Under Assumptions~\ref{assmpn: assumption1}~-~\ref{assmpn: assumption4}, if $p^* - 1 \geq 0$ then 
$$
1 - d^* \geq \frac{R_0\eta}{R_0+D_0}\frac{p^* - 1}{p^*}.
$$
\end{theorem}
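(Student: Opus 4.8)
The plan is to exhibit a single point $\bar x\in\varspace$ and bound $\Phi(\bar x)$ directly, so that $d^*\le\Phi(\bar x)$ yields the claim. If $p^*=1$ the right-hand side is zero and $\Phi(x^*)\le 1$ (as $f(x^*)=1$ forces $f^{\Gamma,e_0}(x^*)\le 1$ and $x^*\in\feasReg$ forces $\varphi_{\feasReg}(x^*)\le 1$), so assume $p^*>1$. The candidate blends the primal optimizer $x^*$ (which pushes $f^{\Gamma,e_0}$ down, but may sit on $\partial\feasReg$) with the Slater point $x_{SL}$ (which is comfortably in $\interior\feasReg$, but carries no objective control): take $\bar x\defeq(1-\lambda)x^*+\lambda x_{SL}$ with the specific weight $\lambda\defeq\tfrac{R_0}{R_0+D_0}\cdot\tfrac{p^*-1}{p^*}\in\big(0,\tfrac{p^*-1}{p^*}\big)$.

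Three elementary estimates on $\bar x$ come first. Concavity of $f$ together with $f\ge 0$ gives $f(\bar x)\ge(1-\lambda)p^*$. Convexity of the identifier together with $\varphi_{\feasReg}(x^*)\le 1$ (since $x^*\in\feasReg$, and $\feasReg=\{\varphi_{\feasReg}\le 1\}$ as it is the closure of $\interior\feasReg=\{\varphi_{\feasReg}<1\}$) gives $\varphi_{\feasReg}(\bar x)\le 1-\lambda(1-\varphi_{\feasReg}(x_{SL}))$. Convexity of $\gauge{S_0}{e_0}$ together with $\gauge{S_0}{e_0}(x_{SL})\le 1$ (since $x_{SL}\in\dom f\subseteq S_0$) and $\gauge{S_0}{e_0}(x^*)<1$ gives $\gauge{S_0}{e_0}(\bar x)\le 1-(1-\lambda)(1-\gauge{S_0}{e_0}(x^*))<1$; in particular $\bar x\in\interior S_0$.

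The crux is an upper bound on $f^{\Gamma,e_0}(\bar x)$; a naive Lipschitz estimate (recall $f^{\Gamma,e_0}$ is only $1/R_0$-Lipschitz) can produce a bound exceeding $1$, so I argue geometrically instead. From the definition of the radial transform and the strict upper radiality of $f$ from~\eqref{implication: f_convexImples_fisUpperRadial} (the perspective $v\mapsto v f(e_0+\tfrac{\bar x-e_0}{v})$ is nondecreasing, u.s.c., and strictly increasing where positive), one gets, for $c>0$, the equivalence $f^{\Gamma,e_0}(\bar x)\le c\iff c\,f\!\big(e_0+\tfrac{\bar x-e_0}{c}\big)\ge 1$. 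Writing $c_0=\gauge{S_0}{e_0}(\bar x)\in[0,1)$, the ray from $e_0$ through $\bar x$ meets $\partial S_0$ at $b=e_0+\tfrac{\bar x-e_0}{c_0}$, and on $[e_0,b]$ the function $f$ is concave and nonnegative with $f(\bar x)\ge(1-\lambda)p^*$; interpolating between $\bar x$ (at fraction $c_0$) and $b$ (at fraction $1$) yields, for $c\in[c_0,1]$, $c\,f\!\big(e_0+\tfrac{\bar x-e_0}{c}\big)\ge\tfrac{c-c_0}{1-c_0}f(\bar x)$, which is $\ge 1$ once $c\ge c_0+\tfrac{1-c_0}{f(\bar x)}$. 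Hence $f^{\Gamma,e_0}(\bar x)\le c_0+\tfrac{1-c_0}{f(\bar x)}$; this is monotone in $c_0$ and in $1/f(\bar x)$, so substituting the bounds on $c_0$ and on $1/f(\bar x)\le\tfrac1{(1-\lambda)p^*}$ and simplifying (using $(1-\lambda)\big(1-\tfrac1{(1-\lambda)p^*}\big)=(1-\lambda)-\tfrac1{p^*}$) gives $f^{\Gamma,e_0}(\bar x)\le 1-(1-\gauge{S_0}{e_0}(x^*))\big((1-\lambda)-\tfrac1{p^*}\big)$.

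Combining the two component bounds, $1-\Phi(\bar x)=\min\{1-f^{\Gamma,e_0}(\bar x),\,1-\varphi_{\feasReg}(\bar x)\}\ge\min\{(1-\gauge{S_0}{e_0}(x^*))((1-\lambda)-\tfrac1{p^*}),\ \lambda(1-\varphi_{\feasReg}(x_{SL}))\}$. The chosen weight makes $(1-\lambda)-\tfrac1{p^*}=\tfrac{D_0}{R_0+D_0}\cdot\tfrac{p^*-1}{p^*}$, so the first argument of the minimum equals $(1-\gauge{S_0}{e_0}(x^*))\tfrac{D_0}{R_0+D_0}\tfrac{p^*-1}{p^*}$ and the second equals $(1-\varphi_{\feasReg}(x_{SL}))\tfrac{R_0}{R_0+D_0}\tfrac{p^*-1}{p^*}$; using $R_0\le D_0$, $0\le\gauge{S_0}{e_0}(x^*)\le 1$, and $0\le\varphi_{\feasReg}(x_{SL})$ (nonnegativity of $\varphi_{\feasReg}$ being harmless to assume), both are at least $\tfrac{R_0\eta}{R_0+D_0}\tfrac{p^*-1}{p^*}$, which is the asserted inequality. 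The step requiring the most care is the geometric estimate on $f^{\Gamma,e_0}(\bar x)$: both the ``$f^{\Gamma,e_0}(\bar x)\le c\iff c f(\cdots)\ge 1$'' reformulation (which rests on the precise u.s.c./monotonicity/strictness content of upper radiality) and the concavity interpolation along the ray, which is exactly where the interiority of $x^*$ in $S_0$ and the diameter $D_0$ enter.
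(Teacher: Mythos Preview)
Your proof is correct and follows the paper's strategy closely: both exhibit a convex combination $\bar x=(1-\lambda)x^*+\lambda x_{SL}$, bound $\varphi_{\feasReg}(\bar x)$ by convexity, and control $f^{\Gamma,e_0}$ via a concavity/ray argument along the segment from $e_0$ through the test point to $\partial S_0$.

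The packaging differs in one place. The paper first proves the intermediate inequality $f^{\Gamma,e_0}(x^*)\le 1-(1-\gamma_{S_0,e_0}(x^*))\tfrac{p^*-1}{p^*}$ by running the ray argument \emph{at $x^*$}, and then transports this to $x_\lambda$ using the $1/R_0$-Lipschitz bound on $f^{\Gamma,e_0}$ together with $\|x_{SL}-x^*\|\le D_0$; this motivates their choice $\lambda=\tfrac{R_0}{R_0+D_0}(1-\gamma_{S_0,e_0}(x^*))\tfrac{p^*-1}{p^*}$. You instead run the ray argument \emph{directly at $\bar x$}, obtaining $f^{\Gamma,e_0}(\bar x)\le c_0+\tfrac{1-c_0}{f(\bar x)}$, and never invoke Lipschitz continuity; this is why your simpler weight $\lambda=\tfrac{R_0}{R_0+D_0}\tfrac{p^*-1}{p^*}$ works. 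Your route is marginally more self-contained, while the paper's isolates the clean reusable bound on $f^{\Gamma,e_0}(x^*)$. Both proofs quietly use $\varphi_{\feasReg}(x_{SL})\ge 0$ (equivalently $(1-\gamma_{S_0,e_0}(x^*))\ge\eta$), which is not part of the stated assumptions but holds for the canonical identifier; you flag it, and the paper uses it without comment.
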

\begin{theorem}\label{thm: primalLowerBoundIfDualBelowOne}
Under Assumptions~\ref{assmpn: assumption1}~-~\ref{assmpn: assumption4}, if $1 - d^* \geq 0$ then 
$$
p^* - 1 \geq \frac{R_0}{D_0 + R_0}\frac{1 - d^*}{d^*}.
$$
In fact, if $y \in \mathcal{S}$ satisfies $f^{\Gamma, e_0}(y) \leq 1,$ then $f(y) - 1 \geq \frac{R_0}{D_0 + R_0}\frac{1 - f^{\Gamma, e_0}(y)}{f^{\Gamma, e_0}(y)}.$
\end{theorem}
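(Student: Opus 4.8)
The plan is to prove the refined pointwise statement first — namely, that any $y \in \feasReg$ with $f^{\Gamma,e_0}(y) \le 1$ satisfies $f(y) - 1 \ge \frac{R_0}{D_0+R_0}\,\frac{1 - f^{\Gamma,e_0}(y)}{f^{\Gamma,e_0}(y)}$ — and then deduce the $d^*$ bound as a corollary. For the corollary: since $\Phi$ is continuous and (under the canonical identifier, or by Lemma~\ref{lem: dualHasBoundedLevelSets}) has bounded sublevel sets, there is a minimizer $y$ with $\Phi(y) = d^* \le 1$; then both $f^{\Gamma,e_0}(y) \le d^* \le 1$ and $\varphi_\feasReg(y) \le d^* \le 1$, the latter forcing $y \in \feasReg$ since $\varphi_\feasReg$ is a convex identifier (so $\varphi_\feasReg(y) \le 1 \implies y \in \feasReg$, using that $\feasReg$ is closed and $\{x : \varphi_\feasReg(x) < 1\} = \interior \feasReg$). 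Applying the pointwise bound at this $y$ and using $f(y) \le p^*$, $f^{\Gamma,e_0}(y) \le d^*$, together with monotonicity of $t \mapsto (1-t)/t$ on $(0,1]$, yields $p^* - 1 \ge \frac{R_0}{D_0+R_0}\,\frac{1-d^*}{d^*}$.

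For the pointwise claim, fix $y \in \feasReg$ with $v := f^{\Gamma,e_0}(y) \in (0,1]$ (if $v = 0$ the inequality is vacuous or handled separately; if $v \ge 1$ there is nothing to prove). By the definition~\eqref{def: upperRadialTransform} of the radial transform, $v = f^{\Gamma,e_0}(y)$ means that $v\, f\!\left(e_0 + \frac{y-e_0}{v}\right) \ge 1$ in the limiting/supremum sense; set $z := e_0 + \frac{y-e_0}{v}$, so $f(z) \ge 1/v$. The point $z$ lies on the ray from $e_0$ through $y$, on the far side of $y$ since $v \le 1$, and $z \in S_0 = \mathrm{cl\,dom}\,f$. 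The key geometric step is to bound how far $z$ can be from the feasible set $\feasReg$: since $e_0 \in \interior S_0$ with $R_{e_0}(S_0) = R_0 > 0$, and $y$ itself is feasible, one controls $\gamma_{S_0,e_0}(z)$ and hence, via the Lipschitz estimate~\eqref{implication: GaugesAreLipschitz} for $\gauge{S_0}{e_0}$ (which is $1/R_0$-Lipschitz) and the diameter bound $D_0 = D(S_0)$, the displacement $\|z - y\|$. Concretely $z = y + (1/v - 1)(y - e_0)$, so $\|z-y\| = (1/v - 1)\|y - e_0\| \le (1/v-1)D_0$; this is the quantitative handle.

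The heart of the argument is then a concavity/one-dimensional interpolation along the segment $[y,z]$: we know $f(y) \ge 0$ trivially (objective is $\target$-valued) but we need more — we need to feed in that the \emph{primal value} $f$ restricted to the feasible segment is at least the interpolant between $f(z) \ge 1/v$ at $z$ and whatever lower bound holds at $y$. The cleaner route is to run the radial duality in reverse: from $f(z) \ge 1/v$, concavity of $f$, and the structural fact that $\gauge{S_0}{e_0}$ is $1/R_0$-Lipschitz with $\gauge{S_0}{e_0}(y) \le 1$ (as $y \in \feasReg \subseteq S_0$) while $\gauge{S_0}{e_0}(z) = v \cdot \gauge{S_0}{e_0}(z)$... — more precisely, one uses that along the ray the gauge scales affinely and compares the point $y$, which is a convex combination $y = (1 - 1/\mu) e_0 + (1/\mu) z$ where $\mu = 1/v \ge 1$, giving by concavity $f(y) \ge (1-v) f(e_0) + v f(z) \ge v \cdot (1/v) = 1$ once we also exploit $f(e_0) \ge 0$; but this only recovers $f(y) \ge 1$, not the sharper rate. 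To get the extra factor $\frac{R_0}{D_0+R_0}\frac{1-v}{v}$, I would instead push $z$ out \emph{further} along the ray to the boundary point $z'$ of $S_0$, at radial distance determined by $\gamma_{S_0,e_0}(z') = 1$, note $\|z' - e_0\| \ge R_0$ and $\|z' - e_0\| \le D_0 + \|e_0 - (\text{some feasible reference})\|$ is bounded via $D_0$ and $R_0$, then interpolate $f$ between $e_0$, $y$, and $z'$ using concavity together with $f(z') \ge 0$ and the known value $f(z) \ge 1/v$ at the intermediate point $z \in [y, z']$. Solving the resulting linear inequality for $f(y)$ produces exactly the claimed bound, with the ratio $\frac{R_0}{D_0+R_0}$ emerging as the ratio of segment lengths $\|z - y\| / \|z' - y\|$ (or its reciprocal complement). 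The main obstacle I anticipate is getting this interpolation bookkeeping tight: one must choose the three collinear points and apply concavity in the right order so that the weaker estimate $f(z') \ge 0$ (rather than a positive lower bound) does not destroy the rate, and so that the geometric constants collapse to precisely $R_0/(D_0+R_0)$ rather than something looser — this is where the boundedness of $S_0$ (Assumption~\ref{assmpn: assumption1}) and the interior radius lower bound $R_0$ (Assumption~\ref{assmpn: assumption3}) must both be used, with Assumption~\ref{assmpn: assumption4}'s $x_{SL}$ likely entering to certify $f^{\Gamma,e_0}(y) > 0$ near the relevant region.
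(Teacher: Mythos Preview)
Your overall architecture—prove the pointwise inequality first, then evaluate at a minimizer of $\Phi$ to get the $d^*$ bound—matches the paper, and that reduction step is fine. The gap is in the pointwise argument. You correctly observe that writing $y=(1-v)e_0+vz$ with $z=e_0+(y-e_0)/v$ and using $f(e_0)\ge 0$, $f(z)\ge 1/v$ only yields $f(y)\ge 1$. Your proposed repair—pushing further out along the same ray to a boundary point $z'\in\partial S_0$ and ``interpolating among $e_0,y,z,z'$''—cannot sharpen this. All four points sit on the half-ray from $e_0$ through $y$ in the order $e_0,y,z,z'$. Concavity gives lower bounds only at \emph{interior} points of a segment; the only such inequality with $y$ in the middle has endpoints $e_0$ and $z$ (or $z'$), which carry no better information than before. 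Writing $z=\mu y+(1-\mu)z'$ gives $f(z)\ge \mu f(y)+(1-\mu)f(z')$, an \emph{upper} bound on a combination involving $f(y)$, not a lower bound. No arrangement of points on that half-ray can produce the extra factor $\tfrac{R_0}{D_0+R_0}\tfrac{1-v}{v}$.

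The missing idea is to place the auxiliary point on the \emph{opposite} side of $e_0$. The paper takes $z=e_0-R_0\,\frac{y-e_0}{\|y-e_0\|}$, which lies in $S_0$ by the definition of $R_0$, so $f(z)\ge 0$. Now $y$ is a genuine convex combination of this $z$ and $w:=e_0+(y-e_0)/v$ (your old $z$): one checks $y=\lambda w+(1-\lambda)z$ with $\lambda=v\bigl(1+\tfrac{R_0(1-v)}{\|y-e_0\|+R_0 v}\bigr)$. Concavity then gives $f(y)\ge \lambda f(w)+(1-\lambda)f(z)\ge \lambda/v=1+\tfrac{R_0(1-v)}{\|y-e_0\|+R_0 v}$, and the final form follows after bounding $\|y-e_0\|\le D_0\,\gamma_{S_0,e_0}(y)\le D_0\,f^{\Gamma,e_0}(y)$. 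Note the argument never uses $y\in\feasReg$ or Assumption~\ref{assmpn: assumption4}; the role of $R_0$ is exactly to supply a point \emph{behind} $e_0$ at which $f\ge 0$ is guaranteed, which is what makes the interpolation nontrivial.
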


These two theorems provide bounds on the relative distance from the primal/dual optimal value from one in terms of the dual/primal's optimal value's absolute gap from one. Such conversions between absolute and relative accuracy have occurred throughout prior works on radial methods, see Renegar~\cite{Renegar2016,Renegar2019}. For our multiradial theory, these relationships are primarily controlled by the natural geometric condition number based on the objective function's domain $R_0/(D_0+R_0)$.

Consider applying these bounds to a rescaled problem with objective function $\tau f$ for some $\tau \geq 1/p^*$. Recall this rescaled problem's maximum value is denoted by $p(\tau).$ In such rescaled settings, bounding $d^* \leq 1$, we denote the two coefficients above as
 \begin{equation}\label{def: conversionConstants}
    \ratioCons = \frac{R_0}{D_0 + R_0} \quad \text{and} \quad c_{\tau} = \frac{1}{p(\tau)}\frac{R_0}{D_0 + R_0}\eta.
 \end{equation}
 This notation helps illuminate the following relation implied by our theory
 \begin{equation}
     c_{\tau}[p(\tau) - 1] \leq 1 - d(\tau) \leq \frac{1}{\rho}[p(\tau) - 1] \quad \text{whenever } p(\tau) - 1 \geq 0 \text{ or } 1 - d(\tau) \geq 0.
 \end{equation}
The following corollaries of Theorems~\ref{thm: dualUpperBoundIfPrimalAboveOne}~and~\ref{thm: primalLowerBoundIfDualBelowOne} provide the basis for our algorithms.
\begin{corollary}\label{cor: fomWorksIfDeltaSmall}
     Let $ \tau \geq 1/ p^* > 0$ and $r \in [0, 1].$ Under Assumptions~\ref{assmpn: assumption1}~-~\ref{assmpn: assumption4}, any $y$ with $\Phi_{\tau}(y) - d(\tau)\leq (1 - r)[1 - d(\tau)]$ has
    $
    y \in \feasReg $ and $\tau f(y) - 1 \geq r\rho[1 - d(\tau)].
    $
\end{corollary}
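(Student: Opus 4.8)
The plan is to combine the two general optimality relationships in Theorems~\ref{thm: dualUpperBoundIfPrimalAboveOne} and~\ref{thm: primalLowerBoundIfDualBelowOne} with the hypothesis that $\Phi_\tau(y)$ is close to $d(\tau)$. First I would record what is known about the rescaled problem: since $\tau \geq 1/p^*$, we have $p(\tau) = \tau p^* \geq 1$, so by Theorem~\ref{thm: dualUpperBoundIfPrimalAboveOne} applied to $\tau f$ we get $1 - d(\tau) \geq \frac{R_0\eta}{R_0+D_0}\frac{p(\tau)-1}{p(\tau)} \geq 0$, hence $d(\tau) \leq 1$. (One should double-check that Assumption~\ref{assmpn: assumption4}'s quantity $\eta$ is unchanged under rescaling the objective by $\tau$, since $\gauge{S_0}{e_0}$ depends only on $S_0 = \mathrm{cl\,dom}\, f$, which is invariant under positive rescaling of $f$ — so $\eta$ is the same, and likewise $R_0, D_0$.)

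Next I would chain the inequalities. The hypothesis gives $\Phi_\tau(y) - d(\tau) \leq (1-r)[1 - d(\tau)]$, which rearranges to
\begin{equation*}
    1 - \Phi_\tau(y) \geq 1 - d(\tau) - (1-r)[1-d(\tau)] = r[1 - d(\tau)] \geq 0 \ .
\end{equation*}
In particular $\Phi_\tau(y) \leq 1$, which by definition of $\Phi_\tau = \max\{(\tau f)^{\Gamma,e_0}, \varphi_\feasReg\}$ forces both $(\tau f)^{\Gamma,e_0}(y) \leq 1$ and $\varphi_\feasReg(y) \leq 1$. The latter, together with the convex-identifier property $\interior \feasReg = \{x : \varphi_\feasReg(x) < 1\}$ and the fact that $\feasReg$ is closed (Assumption~\ref{assmpn: assumption2}), gives $y \in \feasReg$ — this is where I would want to be slightly careful, arguing that $\{x : \varphi_\feasReg(x) \leq 1\} = \mathrm{cl}\{x : \varphi_\feasReg(x) < 1\} = \mathrm{cl}\,\interior\feasReg = \feasReg$ using lower semicontinuity/convexity of $\varphi_\feasReg$ and that $\feasReg$ has nonempty interior by Assumption~\ref{assmpn: assumption4}.

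For the objective bound, I would invoke the ``in fact'' clause of Theorem~\ref{thm: primalLowerBoundIfDualBelowOne} applied to the rescaled objective $\tau f$: since $y \in \feasReg$ and $(\tau f)^{\Gamma,e_0}(y) \leq 1$, we obtain
\begin{equation*}
    \tau f(y) - 1 \geq \frac{R_0}{D_0+R_0}\frac{1 - (\tau f)^{\Gamma,e_0}(y)}{(\tau f)^{\Gamma,e_0}(y)} \geq \rho\left(1 - (\tau f)^{\Gamma,e_0}(y)\right) \ ,
\end{equation*}
where the second step uses $(\tau f)^{\Gamma,e_0}(y) \leq 1$ and the definition $\rho = R_0/(D_0+R_0)$. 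Since $\Phi_\tau(y) \geq (\tau f)^{\Gamma,e_0}(y)$, we have $1 - (\tau f)^{\Gamma,e_0}(y) \geq 1 - \Phi_\tau(y) \geq r[1-d(\tau)]$ from the displayed rearrangement above, so $\tau f(y) - 1 \geq \rho\, r[1-d(\tau)] = r\rho[1-d(\tau)]$, as claimed.

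The main obstacle is not any deep calculation but rather the bookkeeping around the ``in fact'' statement of Theorem~\ref{thm: primalLowerBoundIfDualBelowOne}: one must verify it genuinely applies to the rescaled objective $\tau f$ (i.e. that Assumptions~\ref{assmpn: assumption1}--\ref{assmpn: assumption4} are preserved under rescaling, which they are since $S_0$, $R_0$, $D_0$, and $\eta$ are all rescaling-invariant and $p(\tau)=\tau p^* > 0$), and that the point $y$ produced here satisfies exactly the hypotheses $y\in\feasReg$ and $(\tau f)^{\Gamma,e_0}(y)\leq 1$ of that clause. A secondary subtlety is the passage from $\varphi_\feasReg(y)\leq 1$ to $y\in\feasReg$, which relies on closedness of $\feasReg$ and the identifier being defined via a strict sublevel set for the interior — I would want to state this as a short lemma or inline argument rather than leave it implicit.
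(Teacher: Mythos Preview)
Your proof is correct and follows essentially the same route as the paper's: rearrange the hypothesis to $1-\Phi_\tau(y)\geq r[1-d(\tau)]$, use Theorem~\ref{thm: dualUpperBoundIfPrimalAboveOne} to ensure $1-d(\tau)\geq 0$ so that $\varphi_\feasReg(y)\leq\Phi_\tau(y)\leq 1$ gives $y\in\feasReg$, then apply the ``in fact'' clause of Theorem~\ref{thm: primalLowerBoundIfDualBelowOne} for the objective bound. The paper's version is terser and treats both the rescaling-invariance of the assumptions and the implication $\varphi_\feasReg(y)\leq 1\Rightarrow y\in\feasReg$ as understood, whereas you (reasonably) flag these as points deserving a line of justification.
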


\begin{proof}
    If $\Phi_{\tau}(y) - d(\tau) \leq (1 - r)[1 - d(\tau)],$ then $1 - \Phi_{\tau}(y) \geq r[1 - d(\tau)].$ Since $\tau \geq 1 / p^*$ implies $1 - d(\tau) \geq 0$ by Theorem~\ref{thm: dualUpperBoundIfPrimalAboveOne}, it follows that $y \in \feasReg$ as $\varphi_\feasReg(y) \leq \Phi_{\tau}(y) \leq 1.$ Moreover, by Theorem~\ref{thm: primalLowerBoundIfDualBelowOne},
        $\tau f(y) - 1 \geq \rho [1 - \Phi_{\tau}(y)] \geq r\rho[1 - d(\tau)]$.
\end{proof}

\begin{corollary}\label{cor: contractionWithRestartingCondition}
    Let $\tau_0 \geq 1/p^*,$ $\delta \geq 0,$ and $\mu \geq 1.$ Under Assumptions~\ref{assmpn: assumption1}~-~\ref{assmpn: assumption4}, if $\tau_1 \leq \frac{1}{1 + \delta}\tau_0$ and $1 - d(\tau) \leq \mu \delta$ then 
    $$
    p(\tau_1) - 1 \leq \frac{1}{1 + \delta}\left(1 - \frac{c_\tau}{\mu}\right)[p(\tau_0) - 1].
    $$
\end{corollary}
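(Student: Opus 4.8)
The plan is to reduce the whole statement to the elementary identity $p(\tau)=\tau p^*$ together with a single invocation of Theorem~\ref{thm: dualUpperBoundIfPrimalAboveOne} applied to the rescaled objective. I read the hypothesis as $1-d(\tau_0)\le\mu\delta$ and the constant in the conclusion as $c_{\tau_0}$, matching the notation of~\eqref{def: conversionConstants}. First I would observe that for any $\tau>0$ the rescaled primal function $\Psi_\tau(x)=\min\{\tau f(x),\hat\iota_{\feasReg}(x)\}$ has $\max_x\Psi_\tau(x)=\tau\max_{x\in\feasReg}f(x)=\tau p^*$ (the maximum is attained inside $\feasReg$ since $p^*>0$), so $p(\tau)=\tau p^*$. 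In particular the hypothesis $\tau_1\le\frac{1}{1+\delta}\tau_0$ gives $p(\tau_1)\le\frac{1}{1+\delta}p(\tau_0)$, which I would rearrange into
$$
p(\tau_1)-1 \;\le\; \frac{1}{1+\delta}\bigl[p(\tau_0)-1\bigr]\;-\;\frac{\delta}{1+\delta}\ .
$$

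The next step is to bound the subtracted term $\delta/(1+\delta)$ from below using the dual. Since $\tau_0\ge 1/p^*$ we have $p(\tau_0)=\tau_0 p^*\ge 1$, so $p(\tau_0)-1\ge 0$ and Theorem~\ref{thm: dualUpperBoundIfPrimalAboveOne} applies to the problem with objective $\tau_0 f$. Crucially, rescaling the objective by $\tau_0>0$ changes neither $\dom f$ (hence neither $S_0$, $R_0$, nor $D_0$), nor $e_0$, nor the maximizer $x^*$, nor a Slater point $x_{SL}$, so the constant $\eta$ is unchanged and Assumptions~\ref{assmpn: assumption1}--\ref{assmpn: assumption4} continue to hold for $\tau_0 f$. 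The theorem therefore yields
$$
1-d(\tau_0)\;\ge\;\frac{R_0\eta}{R_0+D_0}\cdot\frac{p(\tau_0)-1}{p(\tau_0)}\;=\;c_{\tau_0}\bigl[p(\tau_0)-1\bigr]\ ,
$$
using the definition of $c_{\tau_0}$ in~\eqref{def: conversionConstants}. Combined with the standing hypothesis $1-d(\tau_0)\le\mu\delta$, this gives $c_{\tau_0}[p(\tau_0)-1]\le\mu\delta$, i.e.\ $\frac{c_{\tau_0}}{\mu(1+\delta)}[p(\tau_0)-1]\le\frac{\delta}{1+\delta}$.

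Finally I would substitute this last inequality into the displayed bound for $p(\tau_1)-1$:
$$
p(\tau_1)-1\;\le\;\frac{1}{1+\delta}\bigl[p(\tau_0)-1\bigr]-\frac{c_{\tau_0}}{\mu(1+\delta)}\bigl[p(\tau_0)-1\bigr]\;=\;\frac{1}{1+\delta}\Bigl(1-\frac{c_{\tau_0}}{\mu}\Bigr)\bigl[p(\tau_0)-1\bigr]\ ,
$$
which is exactly the claimed contraction. There is essentially no hard analytic step: the argument is just a chaining of two linear bounds, one coming from the rescaling $\tau_1\le\frac{1}{1+\delta}\tau_0$ and one from Theorem~\ref{thm: dualUpperBoundIfPrimalAboveOne}. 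The only point requiring care is the bookkeeping in the middle paragraph — namely checking that Theorem~\ref{thm: dualUpperBoundIfPrimalAboveOne} may legitimately be applied to $\tau_0 f$ and that $R_0,D_0,\eta$ (and hence $c_{\tau_0}$) are invariant under this rescaling, so that the inequality $1-d(\tau_0)\ge c_{\tau_0}[p(\tau_0)-1]$ is precisely what the theorem delivers.
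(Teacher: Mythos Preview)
Your proof is correct and follows essentially the same approach as the paper: both arguments use $p(\tau_1)\le\frac{1}{1+\delta}p(\tau_0)$ from the rescaling hypothesis, rewrite the resulting expression to isolate a $\delta/(1+\delta)$ term, and then chain the assumption $1-d(\tau_0)\le\mu\delta$ with Theorem~\ref{thm: dualUpperBoundIfPrimalAboveOne} (in the form $1-d(\tau_0)\ge c_{\tau_0}[p(\tau_0)-1]$) to replace that term by $\frac{c_{\tau_0}}{\mu(1+\delta)}[p(\tau_0)-1]$. Your write-up is a bit more explicit about why $p(\tau)=\tau p^*$ and why the constants $R_0,D_0,\eta$ are unaffected by the rescaling, but the underlying logic is identical.
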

\begin{proof}
    Applying first that $\tau_1 \leq \frac{1}{1 + \delta}\tau_0,$ second that $1 - d(\tau_0) \leq \mu \delta,$ and third Theorem~\ref{thm: dualUpperBoundIfPrimalAboveOne}, one has
    \begin{align*}
        p(\tau_1) - 1 \leq \frac{1}{1 + \delta}p(\tau_0) - 1 = \frac{1}{1 + \delta}[p(\tau_0) - 1 - \delta] 
         &\leq \frac{1}{1 + \delta}\left[p(\tau_0) - 1 - \frac{c_{\tau_0}}{\mu}\frac{1 - d(\tau)}{c_{\tau_0}}\right] \\
         & \leq \frac{1}{1 + \delta}\left(1 - \frac{c_{\tau_0}}{\mu}\right)[p(\tau_0) - 1] \ . 
    \end{align*}
\end{proof}

\subsubsection{A Simple Method when Rescaled Problems can be Solved Exactly} To demonstrate how to benefit algorithmically from Theorems~\ref{thm: dualUpperBoundIfPrimalAboveOne} and~\ref{thm: primalLowerBoundIfDualBelowOne}, let $\tau_0 \geq 1 / p^*$ and consider the sequence 
 \begin{equation}\label{def: idealSequence}
     \begin{cases}
     y^{(k+1)} \in \argmin \Phi_{\tau_{k}}\\
    \tau_{k+1} = \frac{1}{f(y^{(k+1)})}.
     \end{cases}
 \end{equation}
With $r = 1,$ Corollary~\ref{cor: fomWorksIfDeltaSmall} implies $\tau_{k+1} \leq \frac{1}{1 + \rho[1 - d(\tau_k)]}\tau_k.$ Therefore, taking $\delta_k = \rho[1 - d(\tau_k)]$ and $\mu = 1/\rho,$ we have $\tau_{k+1} \leq \frac{1}{1 + \delta_k}\tau_k$ and $1 - d(\tau_k) \leq \mu \delta_k$ for all $k \geq 0.$ This implies $p(\tau_{k+1}) - 1 \leq \frac{1}{1 + \delta_k}(1 - \rho c_{\tau_k})[p(\tau_k) - 1]$ by Corollary~\ref{cor: contractionWithRestartingCondition}. Noting $c_{\tau_k} \geq c_{\tau_0}$ since $\tau_k$ is decreasing and $\delta_k\geq 0$ yields the following theorem.
\begin{theorem}\label{thm: linearConvergenceOfIdealSeq}
    Under Assumptions~\ref{assmpn: assumption1}~-~\ref{assmpn: assumption4}, if $\tau_0 p^* -1 > 0$, the ideal sequence~\eqref{def: idealSequence} has 
     $$     (1 - \ratioCons c_{\tau_0})^k[\tau_0p^* -1] \geq p(\tau_k) - 1. $$
    Hence for any $\varepsilon > 0$, all $k \geq \frac{1}{\ratioCons c_{\tau_0}}\log(p^*[\tau_0p^* - 1]/\varepsilon)$ have
     $$
     p^* - f(y^{(k)}) < \varepsilon \quad \text{and} \quad y^{(k)} \text{ feasible}.
     $$
 \end{theorem}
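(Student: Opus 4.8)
My plan is to formalize the short induction that the discussion preceding the theorem already outlines; essentially all the analytic content is packaged into Corollaries~\ref{cor: fomWorksIfDeltaSmall} and~\ref{cor: contractionWithRestartingCondition}, so the work is purely bookkeeping. First I would establish the structural invariant that, for every $k$, the iterate $\tau_k$ of the ideal sequence~\eqref{def: idealSequence} is well defined and positive, the sequence $(\tau_k)$ is nonincreasing, and $\tau_k \geq 1/p^*$ (equivalently $p(\tau_k) = \tau_k p^* \geq 1$). This goes by induction: given $\tau_k \geq 1/p^*$, apply Corollary~\ref{cor: fomWorksIfDeltaSmall} with $r=1$ to $y^{(k+1)} \in \argmin \Phi_{\tau_k}$ (which satisfies $\Phi_{\tau_k}(y^{(k+1)}) - d(\tau_k) = 0$) to get $y^{(k+1)} \in \feasReg$ and $\tau_k f(y^{(k+1)}) - 1 \geq \rho[1 - d(\tau_k)] =: \delta_k$, with $\delta_k \geq 0$ by Theorem~\ref{thm: dualUpperBoundIfPrimalAboveOne} applied to the rescaled problem (legitimate since $p(\tau_k) \geq 1$). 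Then $f(y^{(k+1)}) \geq (1+\delta_k)/\tau_k > 0$, so $\tau_{k+1} = 1/f(y^{(k+1)})$ is well defined, $\tau_{k+1} \leq \tau_k/(1+\delta_k) \leq \tau_k$, and $\tau_{k+1} \geq 1/p^*$ since $y^{(k+1)} \in \feasReg$ forces $f(y^{(k+1)}) \leq p^*$.

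Next I would apply Corollary~\ref{cor: contractionWithRestartingCondition} at each $k$ with the substitutions $\tau_0 \leftarrow \tau_k$, $\tau_1 \leftarrow \tau_{k+1}$, $\delta \leftarrow \delta_k$, $\mu \leftarrow 1/\rho$; its hypotheses $\tau_{k+1} \leq \tau_k/(1+\delta_k)$ and $1 - d(\tau_k) \leq \mu\delta_k$ hold by the previous paragraph (the latter with equality, by choice of $\delta_k$ and $\mu$), so $p(\tau_{k+1}) - 1 \leq \frac{1}{1+\delta_k}(1 - \rho c_{\tau_k})[p(\tau_k) - 1]$. I then weaken this using $\frac{1}{1+\delta_k} \leq 1$ and $c_{\tau_k} \geq c_{\tau_0}$, the latter because $c_\tau = \rho\eta/p(\tau) = \rho\eta/(\tau p^*)$ is nonincreasing in $\tau$ while $\tau_k \leq \tau_0$. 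A sign check confirms $1 - \rho c_{\tau_0} \in [0,1)$: it is strictly below $1$ because $\rho > 0$ (Assumption~\ref{assmpn: assumption3}) and $c_{\tau_0} > 0$ (Assumption~\ref{assmpn: assumption4}), and it is nonnegative because $0 \leq p(\tau_{k+1}) - 1$ with $[p(\tau_k) - 1] > 0$ forces $1 - \rho c_{\tau_k} \geq 0$, whence $1 - \rho c_{\tau_0} \geq 1 - \rho c_{\tau_k} \geq 0$. Unrolling $p(\tau_{k+1}) - 1 \leq (1 - \rho c_{\tau_0})[p(\tau_k) - 1]$ starting from $p(\tau_0) = \tau_0 p^*$ gives the first claimed bound.

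Finally I would translate to the stated primal guarantee. Feasibility of each $y^{(k)}$ is already recorded above (Corollary~\ref{cor: fomWorksIfDeltaSmall} with $r=1$). For the objective gap, $f(y^{(k)}) = 1/\tau_k$ and $\tau_k \geq 1/p^*$ give
\[
p^* - f(y^{(k)}) = \frac{p(\tau_k) - 1}{\tau_k} \leq p^*\,[p(\tau_k) - 1] \leq p^*[\tau_0 p^* - 1]\,(1 - \rho c_{\tau_0})^k \leq p^*[\tau_0 p^* - 1]\,e^{-\rho c_{\tau_0} k},
\]
using $1 - x \leq e^{-x}$; requiring the right-hand side below $\varepsilon$ and solving for $k$ yields exactly the stated iteration count $k \geq \frac{1}{\rho c_{\tau_0}}\log(p^*[\tau_0 p^* - 1]/\varepsilon)$.

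I do not anticipate a real obstacle here, since the substantive estimates are Theorems~\ref{thm: dualUpperBoundIfPrimalAboveOne} and~\ref{thm: primalLowerBoundIfDualBelowOne} and their two corollaries, which this argument treats as black boxes. The only delicate points are the bookkeeping ones just flagged: maintaining the invariant $\tau_k \geq 1/p^*$ so the rescaled versions of those results remain applicable, using monotonicity of $c_\tau$ to replace $c_{\tau_k}$ by $c_{\tau_0}$, and checking $1 - \rho c_{\tau_0} \in [0,1)$ so the geometric (equivalently exponential) decay is genuine.
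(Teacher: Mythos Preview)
Your proposal is correct and follows essentially the same approach as the paper: the paper's argument (given in the paragraph immediately preceding the theorem) also applies Corollary~\ref{cor: fomWorksIfDeltaSmall} with $r=1$ to obtain $\tau_{k+1}\le \tau_k/(1+\delta_k)$ for $\delta_k=\rho[1-d(\tau_k)]$, then Corollary~\ref{cor: contractionWithRestartingCondition} with $\mu=1/\rho$, and finally uses $c_{\tau_k}\ge c_{\tau_0}$ and $\delta_k\ge 0$ to unroll the contraction. Your write-up simply makes explicit the bookkeeping the paper leaves implicit (the invariant $\tau_k\ge 1/p^*$, the sign check on $1-\rho c_{\tau_0}$, and the translation $p^*-f(y^{(k)})=(p(\tau_k)-1)/\tau_k\le p^*[p(\tau_k)-1]$), but the substance is identical.
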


\subsubsection{A Simple Method when Rescaled Problems are Solved Inexactly}
The sequence~\eqref{def: idealSequence} will often not be practical to implement as it requires exact solutions to the multiradial dual problems $\min_{y\in \varspace}\Phi_{\tau}(y).$ However, the linear convergence in Theorem~\ref{thm: linearConvergenceOfIdealSeq} suggests that a good primal solution may be obtained by approximately solving a (relatively) small number of dual problems. This is the main motivation behind our general multiradial methods; we mimic the sequence~\eqref{def: idealSequence}, replacing exact solutions with approximate ones. 

Here we sketch a general family of methods of this form, with the drawback that determining when an approximate solution is good enough still requires unrealistic problem-dependent knowledge. 
We suppose an initial feasible point $y^{(0)} \in \dom f \cap \feasReg$ is given and set $\tau_0 = 1/f(y^{(0)})$.
To approximate the iteration~\eqref{def: idealSequence}, we apply a given $\mathtt{fom}$ to minimize $\Phi_{\tau_k}$ initialized at $y^{(k)}$, yielding iterates $y_i^{(k)}$. Once a sufficient accuracy is reached at some iteration $i$ of the subproblem optimization, we set $y^{(k+1)}=y^{(k)}_i$. One natural way to define sufficient accuracy is to require $\Phi_{\tau_k}(y^{(k)}_i) - d(\tau_k) \leq \delta_k.$ If $d(\tau_k) + \delta_k \leq 1,$ then such $y^{(k)}_i$ will be feasible and Theorem~\ref{thm: primalLowerBoundIfDualBelowOne} implies $1 + \rho\delta_k \leq \tau_k f(y^{(k)}_i).$ Motivated by this observation, we bypass the `dual' notion of accuracy and directly say $y^{(k)}_i$ is sufficiently accurate if (i) $1/f(y^{(k)}_i) \leq \frac{1}{1 + \delta_k}\tau_k$ and (ii) $y^{(k)}_i\in \cap_{j=1}^m S_j$. This process is formalized in Algorithm~\ref{alg: theMRM}.

\begin{algorithm}[ht]
\caption{The MultiRadial Method}\label{alg: theMRM}
\begin{algorithmic}[1]
\REQUIRE $(f, e_0),$ $x_0 \in \dom f \cap \feasReg$, a convex identifier $\varphi_\feasReg,$ $\set{\delta_k}_{k=0}^{\infty},$ a first-order method $\mathtt{fom}$
\STATE Set $\tau_{0} = 1/f(x_{0})$ and $y^{(0)}_0 = \mathtt{fom.initialize}(x_0, \delta_0, \Phi_{\tau_0})$ and $i=0$
\FOR{$k = 0, 1, 2, \dots,$}
    \REPEAT
        \STATE $y^{(k)}_{i+1} = \mathtt{fom.step}(y^{(k)}_{i},\delta_k,\Phi_{\tau_k}),\ i=i+1$ \hfill ($\mathtt{fom}$ takes one step)
    \UNTIL{$1/f(y^{(k)}_i) \leq \frac{1}{1 + \delta_k}\tau_k$ and $y^{(k)}_i$ is feasible}
    \STATE Set $\tau_{k+1} = 1/f(y^{(k)}_{i})$ \hfill (Restart $\mathtt{fom}$ once satisfied by $y^{(k)}_{i}$)
    \STATE Set $y^{(k+1)}_{0} = \mathtt{fom.initialize}(y^{(k)}_{i}, \delta_{k+1}, \Phi_{\tau_{k+1}})$ and $i=0$
\ENDFOR
\end{algorithmic}
\end{algorithm}

Selecting a sequence of stopping criteria $\delta_k$ for which Algorithm~\ref{alg: theMRM} has provably good performance guarantees is nontrivial. As $1 - d(\tau_k)$ decreases to $0$, $\delta_k$ must decrease similarly for the condition $1/f(y^{(k)}_i) \leq \frac{1}{1 + \delta_k}\tau_k$ to be attainable; 
below we show that taking $\delta_k = \rho\frac{1 - d(\tau_k)}{2}$ maintains the outer linear convergence rate of~\eqref{def: idealSequence}.


\begin{theorem}\label{thm: approxIdealSeqConvergence}
    Suppose Assumptions~\ref{assmpn: assumption1}~-~\ref{assmpn: assumption4} hold and let $\mathtt{fom}$ be given. Then, for all $\varepsilon > 0,$ Algorithm~\ref{alg: theMRM} with $\delta_k = \rho\frac{1 - d(\tau_k)}{2}$ has 

    $$
    p^* - f(y^{(k')}_0) \leq \varepsilon \quad \text{ and } \quad y^{(k')}_0 \text{ feasible}
    $$
    for some $k' \leq N \defeq \lceil\frac{2}{\rho c_{\tau_0}}\log(\frac{p^*[\tau_0 p^* - 1]}{\varepsilon})\rceil.$ The total number of $\mathtt{fom}$ steps needed to find such $y^{(k')}_0$ is at most $\sum_{k=1}^{N} K_{\mathtt{fom}}(D, \delta_{k}/\rho, \Phi_{\tau_{k}}).$
\end{theorem}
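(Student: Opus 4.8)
The plan is to run, with inexact inner solves, the same argument that produced Theorem~\ref{thm: linearConvergenceOfIdealSeq} for the ideal sequence~\eqref{def: idealSequence}, and then to bound the total subroutine cost by summing over outer iterations. The choice $\delta_k = \rho(1-d(\tau_k))/2$ is made precisely so that each inexact solve still meets the hypotheses of Corollaries~\ref{cor: fomWorksIfDeltaSmall} and~\ref{cor: contractionWithRestartingCondition}. Throughout, recall $\rho = R_0/(D_0+R_0) \in (0,1]$ and that, as in Theorem~\ref{thm: linearConvergenceOfIdealSeq}, we may assume $\tau_0 p^* > 1$ (otherwise $x_0$ is already optimal and $k'=0$ works).

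First I would show the algorithm is well defined by induction on the outer index $k$, simultaneously tracking three facts: (a) $\tau_k \ge 1/p^*$ (so $d(\tau_k)\le 1$ by Theorem~\ref{thm: dualUpperBoundIfPrimalAboveOne}, hence $\delta_k \in [0,\tfrac12(1-d(\tau_k))]$ as $\rho \le 1$); (b) the restart point satisfies $\Phi_{\tau_k}(y^{(k)}_0) \le 1$; and (c) the inner loop terminates within $K_{\mathtt{fom}}(D,\delta_k/\rho,\Phi_{\tau_k})$ steps and outputs a feasible $y^{(k)}_i$ with $\tau_k f(y^{(k)}_i) \ge 1+\delta_k$. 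Given (c), setting $\tau_{k+1} = 1/f(y^{(k)}_i)$ gives $\tau_{k+1}\le\tau_k/(1+\delta_k)$, and since $\mathtt{fom.initialize}$ leaves the point unchanged, $y^{(k+1)}_0 = y^{(k)}_i$, which re-establishes (a) at $k+1$ (because $y^{(k)}_i \in \feasReg \cap \dom f$ forces $f(y^{(k)}_i)\le p^*$). For (b) at $k+1$, note $\tau_{k+1} f(y^{(k+1)}_0) = \tau_{k+1} f(y^{(k)}_i) = 1$ by the definition of $\tau_{k+1}$, so $\varphi_\feasReg(y^{(k+1)}_0)\le 1$ by feasibility, and also $(\tau_{k+1} f)^{\Gamma,e_0}(y^{(k+1)}_0)\le 1$: using concavity of $f$ together with $f(e_0)>0$ (since $e_0\in\interior S_0 = \interior\dom f$), every $v>1$ has $v\,\tau_{k+1} f\big(e_0 + \tfrac{y^{(k+1)}_0 - e_0}{v}\big) \ge (v-1)\,\tau_{k+1} f(e_0) + 1 > 1$, hence fails the supremum defining $(\tau_{k+1} f)^{\Gamma,e_0}$. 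The base case $k=0$ is identical with $x_0$ in the role of $y^{(k)}_i$.

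The crux is (c), and this is where I expect the main difficulty. The uniform diameter $D$ comes from the inclusion $\{\Phi_{\tau_k}\le1\}\subseteq\{\Phi_{\tau_0}\le1\}$: since $\tau_k\le\tau_0$ and $\tau f$ is pointwise nondecreasing in $\tau$ (as $f\ge0$), the map $\tau\mapsto(\tau f)^{\Gamma,e_0}$ is pointwise nonincreasing, so $\Phi_{\tau_k}(y)\le1$ forces $\Phi_{\tau_0}(y)\le1$, and Lemma~\ref{lem: dualHasBoundedLevelSets} bounds that set's diameter by $D$. Both $y^{(k)}_0$ (by (b)) and any minimizer $y^\star$ of $\Phi_{\tau_k}$ (which exists by compactness of the sublevel set) lie in $\{\Phi_{\tau_k}\le1\}$, so $\|y^{(k)}_0-y^\star\|\le D$ and the subroutine guarantee~\eqref{eq: subroutine-convergence}, run to target accuracy $\delta_k/\rho$, produces within $K_{\mathtt{fom}}(D,\delta_k/\rho,\Phi_{\tau_k})$ steps some iterate with $\Phi_{\tau_k}(y^{(k)}_i)-d(\tau_k)\le\delta_k/\rho$, i.e.\ $\Phi_{\tau_k}(y^{(k)}_i)\le\tfrac12(1+d(\tau_k)) = 1-\delta_k/\rho\le 1$. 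Such a $y^{(k)}_i$ is feasible ($\varphi_\feasReg\le\Phi_{\tau_k}\le1$), and since $(\tau_k f)^{\Gamma,e_0}(y^{(k)}_i)\le\Phi_{\tau_k}(y^{(k)}_i)\le1$, the second assertion of Theorem~\ref{thm: primalLowerBoundIfDualBelowOne} applied to $\tau_k f$ gives $\tau_k f(y^{(k)}_i)-1 \ge \rho\big(1-\Phi_{\tau_k}(y^{(k)}_i)\big) \ge \rho\cdot\tfrac{\delta_k}{\rho} = \delta_k$, so the inner stopping test $1/f(y^{(k)}_i)\le\tau_k/(1+\delta_k)$ holds and the loop terminates no later than this. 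Establishing this conversion of a dual accuracy for $\mathtt{fom}$ into the primal stopping test, while securing the single uniform $D$, is the hard part; what remains is bookkeeping.

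Finally I would assemble the two conclusions. From (a)--(c), each outer step has $\tau_{k+1}\le\tau_k/(1+\delta_k)$ and $1-d(\tau_k) = (2/\rho)\delta_k$, so Corollary~\ref{cor: contractionWithRestartingCondition} with $\mu = 2/\rho \ge 1$ gives $p(\tau_{k+1})-1 \le \tfrac{1}{1+\delta_k}\big(1-\tfrac{\rho c_{\tau_k}}{2}\big)[p(\tau_k)-1] \le \big(1-\tfrac{\rho c_{\tau_0}}{2}\big)[p(\tau_k)-1]$, using $\delta_k\ge0$ and $c_{\tau_k}\ge c_{\tau_0}$ ($c_\tau = \eta\rho/p(\tau)$ is nonincreasing in $\tau$ and $\tau_k\le\tau_0$); iterating from $p(\tau_0)-1 = \tau_0 p^*-1$ gives $p(\tau_k)-1 \le (1-\tfrac{\rho c_{\tau_0}}{2})^k[\tau_0 p^*-1]$. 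Since $f(y^{(k)}_0) = 1/\tau_k$ with $y^{(k)}_0$ feasible, $p^* - f(y^{(k)}_0) = \tfrac{1}{\tau_k}(p(\tau_k)-1) \le p^*\big(1-\tfrac{\rho c_{\tau_0}}{2}\big)^k[\tau_0 p^*-1]$ (bounding $1/\tau_k\le p^*$), which is $\le\varepsilon$ once $k\ge\tfrac{2}{\rho c_{\tau_0}}\log\tfrac{p^*[\tau_0 p^*-1]}{\varepsilon}$ via $-\log(1-x)\ge x$; hence some $k'\le N$ works, with $y^{(k')}_0$ feasible by (b). Reaching $y^{(k')}_0$ only requires the inner steps of outer iterations $0,\dots,k'-1$, at most $K_{\mathtt{fom}}(D,\delta_k/\rho,\Phi_{\tau_k})$ each by (c); since $k'-1\le N-1$ and these counts are nondecreasing in $k$ (as $\delta_k$ is nonincreasing), the total is at most $\sum_{k=1}^N K_{\mathtt{fom}}(D,\delta_k/\rho,\Phi_{\tau_k})$.
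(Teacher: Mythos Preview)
Your proof is correct and follows essentially the same approach as the paper: use Corollary~\ref{cor: fomWorksIfDeltaSmall} (with $r=1/2$) to show the inner loop terminates within $K_{\mathtt{fom}}(D,\delta_k/\rho,\Phi_{\tau_k})$ steps at a point satisfying the restarting criterion, then use Corollary~\ref{cor: contractionWithRestartingCondition} with $\mu=2/\rho$ to obtain the contraction $p(\tau_{k+1})-1\le(1-\rho c_{\tau_0}/2)[p(\tau_k)-1]$. You are more careful than the paper in setting up the induction with properties (a)--(c) and in justifying the uniform bound $\|y^{(k)}_0-y^\star\|\le D$ via the sublevel-set inclusion $\{\Phi_{\tau_k}\le 1\}\subseteq\{\Phi_{\tau_0}\le 1\}$ and Lemma~\ref{lem: dualHasBoundedLevelSets}; the paper simply asserts this bound. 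One small caveat: your final sentence reindexing $\sum_{k=0}^{k'-1}$ as $\sum_{k=1}^N$ via ``these counts are nondecreasing in $k$'' is not fully justified, since $K_{\mathtt{fom}}$ also depends on $\Phi_{\tau_k}$ (e.g., its smoothness constant can shrink with $\tau_k$), not just on $\delta_k$; but the paper's proof is equally loose on this index shift, so this is a shared cosmetic issue rather than a gap in your argument.
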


\begin{proof}
    Our proof is split into two parts. First, we bound the number of inner loop steps before the stopping criterion is met using Corollary~\ref{cor: fomWorksIfDeltaSmall}. Then, we bound the number of outer loop steps before an $\varepsilon$-minimizer by a similar contraction as seen for the exact method~\eqref{def: idealSequence} using Corollary~\ref{cor: contractionWithRestartingCondition}.

    By definition, the first-order method $\mathtt{fom}$ must have some iteration $i_k$ attain $\Phi_{\tau_k}(y^{(k)}_{i_k}) -d(\tau_k)\leq \delta_k/\rho$ with $i_k \leq  K_{\mathtt{fom}}(\|y^{(k)}_0 - y^{(k)}_*\|, \delta_{k}/\rho, \Phi_{\tau_{k}}) \leq K_{\mathtt{fom}}(D, \delta_{k}/\rho, \Phi_{\tau_{k}}).$
    Since $\delta_k/\rho = \frac{1 - d(\tau_k)}{2}$, Corollary~\ref{cor: fomWorksIfDeltaSmall} implies $y^{(k)}_{i_k}$ is feasible and $\tau_k f(y^{(k)}_{i_k}) - 1 \geq (1 - 1/2)\rho (1- d(\tau_k)) = \delta_k$ (or equivalently, $1/f(y^{(k)}_{i_k}) \leq \frac{\tau_k}{1+\delta_k}$). Therefore, $y^{(k)}_{i_k}$ would satisfy the stopping criteria for the inner loop of Algorithm~\ref{alg: theMRM} and so, the inner loop at iteration $k$ will always terminate within $K_{\mathtt{fom}}(D, \delta_{k}/\rho, \Phi_{\tau_{k}})$ steps.

    Notice that the inner loop stopping criterion ensures that $\tau_{k+1} \leq \frac{1}{1 + \delta_k}\tau_k,$ where $1 - d(\tau_k) \leq \frac{2}{\rho}\delta_k.$ Therefore, each outer loop contracts the (rescaled) objective gap towards one since
    \begin{align*}
        p(\tau_{k+1}) - 1 \leq \frac{1}{1 + \delta_k}\left(1 - \frac{\rho c_{\tau_k}}{2}\right)[p(\tau_k) - 1] &\leq \frac{1}{1 + \delta_k}\left(1 - \frac{\rho c_{\tau_0}}{2}\right)[p(\tau_k) - 1] \\
        &\leq \left(1 - \frac{\rho c_{\tau_0}}{2}\right)[p(\tau_k) - 1],
    \end{align*}
    where the first inequality used Corollary~\ref{cor: contractionWithRestartingCondition} and the second used that $c_{\tau_k}$ is increasing. As such, the primal gap converges linearly with $p(\tau_k) - 1 \leq (1 - \frac{\rho c_{\tau_0}}{2})^k[\tau_0 p^* - 1]$ and consequently, some $k' \leq \lceil\frac{2}{\rho c_{\tau_0}}\log(\frac{p^*[\tau_0 p^* - 1]}{\varepsilon})\rceil$ has a feasible $y^{(k')}_0$ with $p^* - f(y^{(k')}_0) \leq \varepsilon.$ Totalling the number of steps executed by $\mathtt{fom}$ to find each $y^{(k)}_0$ gives the claim.
\end{proof}

\subsection{Proofs for MultiRadial Duality Theory Optimality Relationships}\label{subsec: MRD_theory_proofs}
Below, we first prove Theorems \ref{thm: dualUpperBoundIfPrimalAboveOne} and~\ref{thm: primalLowerBoundIfDualBelowOne}, which bound the primal and dual difference from one in terms of each other. From these, Theorem~\ref{thm: primialDualEqualityAtOne} is almost immediate.

Our proofs use the following two facts repeatedly. Under Assumptions~\ref{assmpn: assumption1}~-~\ref{assmpn: assumption4},
\begin{equation}\label{ineq: concavityOnDomainClosure}
    f(\lambda x + (1 - \lambda)y) \geq \lambda f(x) + (1 - \lambda)f(y) \qquad \forall x,y\in S_0,
\end{equation}
\begin{equation}\label{ineq: fGammaBoundsDomainGauge}
    f(e_0 + \frac{y - e_0}{v}) \geq \frac{1}{v} \qquad \text{for all positive } v \geq f^{\Gamma, e_0}(y).
\end{equation}

\subsubsection{Proof of Theorem~\ref{thm: dualUpperBoundIfPrimalAboveOne}}\label{subsubsec: first_MRD_thm_proof}
    This result primarily follows from the following bound on the radial dual value at $x^*$
    \begin{equation}\label{ineq: fGammaAtOptUpperbound}
        f^{\Gamma, e_0}(x^*) \leq 1 - (1-\gauge{S_0}{e_0}(x^*))\frac{p^* - 1}{p^*}.
    \end{equation}
    We delay the proof of this inequality to first show it suffices to prove the theorem. Consider $x_{\lambda} = \lambda x_{SL} + (1 - \lambda)x^*$ with $\lambda = \frac{R_0}{R_0+D_0}(1 - \gauge{S_0}{e_0}(x^*))\frac{p^* - 1}{p^*}.$ This satisfies
    \begin{align*}
        f^{\Gamma, e_0}(x_\lambda) &\leq f^{\Gamma, e_0}(x^*) + \lambda\frac{D_0}{R_0} 
        \leq 1 - \frac{R_0\eta}{R_0+D_0}\frac{p^* - 1}{p^*}
    \end{align*}
    where the first inequality uses the $1/R_0$-Lipschitz continuity of $f^{\Gamma,e_0}$ and that $\|x_{SL} - x^*\|\leq D_0$, and the second uses~\eqref{ineq: fGammaAtOptUpperbound} and that $(1 - \gauge{S_0}{e_0}(x^*)) \geq \eta$. From the convexity of $\varphi_\feasReg$, it follows that
    $$
    \varphi_\feasReg(x_\lambda) \leq 1 - \frac{R_0\eta}{R_0+D_0}\frac{p^* - 1}{p^*}.
    $$
    Combined, these two bounds prove the result as $\Phi(x_{\lambda}) \leq 1 - \frac{R_0\eta}{R_0+D_0}\frac{p^* - 1}{p^*}.$

    Now we return to prove~\eqref{ineq: fGammaAtOptUpperbound}. Consider $\hat{v} \in (\gauge{S_0}{e_0}(x^*), 1)$ and let $\hat{x} = e_0 + \frac{x^* - e_0}{\hat{v}} \in S_0.$ For any $v \in [\hat{v}, 1]$, note that $e_0 + \frac{x^* - e_0}{v} = (1 - \alpha)\hat{x} + \alpha x^*$ where $\alpha = 1 - \frac{\hat{v}(1 - v)}{v(1 - \hat{v})} \in [0, 1].$ Since $x^*,\hat x \in S_0$, it follows that
    \begin{align*}
        f\left(e_0 + \frac{x^* - e_0}{v}\right) \geq \alpha p^* + (1 - \alpha)f(\hat{x}) \geq \alpha p^* = \frac{v - \hat{v}}{v(1 - \hat{v})} p^*
    \end{align*}
    where the first inequality uses \eqref{ineq: concavityOnDomainClosure} and the second uses $f(\hat{x}) \geq 0.$ 
    In particular, for $v\in [\hat{v} + [1 - \hat{v}]/p^*, 1],$ this ensures $vf\left(e_0 + \frac{x^* - e_0}{v}\right) \geq 1$. Hence $f^{\Gamma, e_0}(x^*) \leq 1 - (1 - \hat{v})(p^* - 1)/p^*$ since $f$ is strictly upper radial. Taking the limit as $\hat{v} \rightarrow \gauge{S_0}{e_0}(x^*)$ gives the claim.

\subsubsection{Proof of Theorem~\ref{thm: primalLowerBoundIfDualBelowOne}}\label{subsubsec: second_MRD_thm_proof}
We separate the proof into two lemmas which are of interest in their own right. The first lemma shows that the multiradial dual function has bounded level sets. Since this function is convex and bounded below, the lemma guarantees that the function has global minimizers. The second lemma establishes the inequality $f(y) - 1 \geq \frac{R_0}{R_0 + D_0}\frac{1 - f^{\Gamma, e_0}(y)}{f^{\Gamma, e_0}(y)}.$ Combining these two lemmas immediately completes the proof.
\begin{lemma}\label{lem: dualHasBoundedLevelSets}
    Under Assumptions~\ref{assmpn: assumption1}~-~\ref{assmpn: assumption4}, if $\Phi(y) \leq 1 + \varepsilon$ for $\varepsilon \geq 0,$ then $y_{\varepsilon} = e_0 + \frac{y - e_0}{1 + \varepsilon}$ has $\inf\limits_{x \in S_0 \cap \feasReg}\|y_{\varepsilon} - x\| \leq \frac{\varepsilon}{1 + \varepsilon}\frac{\varphi_\feasReg(e_0)}{1 - \varphi_\feasReg(x_{SL})}D_0.$ In particular, $\|y - e_0\| \leq D_0 + \varepsilon\left[1 + \frac{\varphi_\feasReg(e_0)}{1 - \varphi_\feasReg(x_{SL})}\right]D_0.$
\end{lemma}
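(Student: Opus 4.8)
The plan is to unpack $\Phi(y)\le 1+\varepsilon$ into the pair of inequalities $f^{\Gamma,e_0}(y)\le 1+\varepsilon$ and $\varphi_\feasReg(y)\le 1+\varepsilon$, and then to proceed in four short moves: (i) use the first inequality to place $y_\varepsilon$ in $S_0$; (ii) use the second, together with the fact that $y_\varepsilon$ is a convex combination of $y$ and $e_0$, to bound $\varphi_\feasReg(y_\varepsilon)$; (iii) slide $y_\varepsilon$ a controlled distance toward the Slater point $x_{SL}$ to land inside $\feasReg$, measuring the shift against the diameter $D_0$; (iv) derive the ``in particular'' bound from the triangle inequality.

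For (i) I would note $f^{\Gamma,e_0}(y)\le\Phi(y)\le 1+\varepsilon$ with $1+\varepsilon>0$, so inequality~\eqref{ineq: fGammaBoundsDomainGauge} applied at $v=1+\varepsilon$ gives $f(y_\varepsilon)\ge\tfrac1{1+\varepsilon}>0$, whence $y_\varepsilon\in\dom f\subseteq S_0$; recall $S_0$ is convex since $f$ is concave. For (ii), writing $y_\varepsilon=\tfrac1{1+\varepsilon}y+\tfrac{\varepsilon}{1+\varepsilon}e_0$ and using convexity of $\varphi_\feasReg$ and $\varphi_\feasReg(y)\le 1+\varepsilon$ yields $\varphi_\feasReg(y_\varepsilon)\le 1+\varepsilon'$ with $\varepsilon':=\tfrac{\varepsilon}{1+\varepsilon}\varphi_\feasReg(e_0)$.

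For (iii) I would set $\lambda:=\varepsilon'/(1+\varepsilon'-\varphi_\feasReg(x_{SL}))$, which lies in $[0,1)$ since $\varphi_\feasReg(x_{SL})<1$ (as $x_{SL}\in\interior\feasReg$ and $\varphi_\feasReg$ is a convex identifier), and let $w:=\lambda x_{SL}+(1-\lambda)y_\varepsilon$. Convexity gives $\varphi_\feasReg(w)\le\lambda\varphi_\feasReg(x_{SL})+(1-\lambda)(1+\varepsilon')$, and the choice of $\lambda$ makes this $\le 1$; since $\{\varphi_\feasReg\le 1\}=\feasReg$ (it is the closure of $\interior\feasReg=\{\varphi_\feasReg<1\}$ and $\feasReg$ is closed), we get $w\in\feasReg$, while $w\in S_0$ by convexity of $S_0$ together with $x_{SL}\in\dom f\subseteq S_0$ and $y_\varepsilon\in S_0$. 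Then, using $y_\varepsilon,x_{SL},w\in S_0$ with $D_0=D(S_0)$ and $\lambda\le\varepsilon'/(1-\varphi_\feasReg(x_{SL}))$,
\[
\inf_{x\in S_0\cap\feasReg}\|y_\varepsilon-x\|\le\lambda\|y_\varepsilon-x_{SL}\|\le\tfrac{\varepsilon}{1+\varepsilon}\tfrac{\varphi_\feasReg(e_0)}{1-\varphi_\feasReg(x_{SL})}D_0.
\]
For (iv), $\|y-e_0\|=(1+\varepsilon)\|y_\varepsilon-e_0\|\le(1+\varepsilon)(\|y_\varepsilon-w\|+\|w-e_0\|)$ with $\|w-e_0\|\le D_0$ (both in $S_0$), and substituting the bound from (iii) gives exactly $D_0+\varepsilon\bigl[1+\tfrac{\varphi_\feasReg(e_0)}{1-\varphi_\feasReg(x_{SL})}\bigr]D_0$.

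I do not anticipate a genuine obstacle: the lemma is two applications of Jensen's inequality layered over the radial estimate~\eqref{ineq: fGammaBoundsDomainGauge}. The points needing care are bookkeeping how the slack propagates through the two rescalings $y\mapsto y_\varepsilon$ and $y_\varepsilon\mapsto w$ (so the final constant is the stated $\tfrac{\varepsilon}{1+\varepsilon}\tfrac{\varphi_\feasReg(e_0)}{1-\varphi_\feasReg(x_{SL})}D_0$ and not something looser), keeping $\lambda$ in its sharp form so the convex combination defining $w$ is legitimate before relaxing the denominator, and recording the membership facts $y_\varepsilon\in S_0$ and $\{\varphi_\feasReg\le 1\}=\feasReg$; the stated bound also tacitly uses $\varphi_\feasReg(e_0)\ge 0$, which holds for the canonical identifier.
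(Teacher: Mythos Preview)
Your proof is correct and follows essentially the same route as the paper's: you construct the identical convex combination of $y_\varepsilon$ and $x_{SL}$ (your $w$ is exactly the paper's $x_\lambda$, with the roles of $\lambda$ and $1-\lambda$ swapped in the parametrization), verify membership in $S_0\cap\feasReg$ via convexity and \eqref{ineq: fGammaBoundsDomainGauge}, and then bound the distance using $D_0$. The only cosmetic differences are that you introduce the intermediate quantity $\varepsilon'$ and relax the denominator of $\lambda$ at the end, whereas the paper computes $\frac{1-\lambda}{\lambda}$ exactly and bounds $\|x_\lambda-x_{SL}\|\le D_0$ instead of $\|y_\varepsilon-x_{SL}\|\le D_0$; both yield the stated constant.
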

\begin{proof}[Proof of Lemma~\ref{lem: dualHasBoundedLevelSets}]
    Consider the point $x_\lambda = (1 - \lambda)x_{SL} + \lambda y_{\varepsilon}$ with $\lambda = \frac{(1 - \varphi_\feasReg(x_{SL}))(1 + \varepsilon)}{(1 - \varphi_\feasReg(x_{SL}))(1 + \varepsilon) + \varepsilon\varphi_\feasReg(e_0)} \in [0,1]$. First, observe $x_\lambda\in S_0$ follows from convexity of $S_0$ since $x_{SL}\in S_0$ by definition and $y_\varepsilon\in S_0$ by \eqref{ineq: fGammaBoundsDomainGauge} noting $f^{\Gamma,e_0}(y)\leq \Phi(y)\leq 1+\varepsilon$. Next, observe $x_\lambda\in \feasReg$ by our choice of $\lambda$ as
    \begin{align*}
        \varphi_\feasReg(x_\lambda) &\leq \varphi_\feasReg(x_{SL}) + \lambda[\varphi_\feasReg(y_{\varepsilon}) - \varphi_\feasReg(x_{SL})] \\
        &\leq \varphi_\feasReg(x_{SL}) + \lambda\left[1 + \frac{\varepsilon}{1 + \varepsilon}\varphi_\feasReg(e_0) - \varphi_\feasReg(x_{SL})\right] = 1
    \end{align*}
    where the first inequality uses convexity of $\varphi_\feasReg$ at $x_\lambda$, and the second uses convexity of $\varphi_\feasReg$ at $y_\varepsilon$ and that $\varphi_\feasReg(y) \leq \Phi(y)\leq 1+\epsilon$. Together since $x_\lambda\in S_0\cap\feasReg$, we conclude
    $$\inf\limits_{x \in S_0 \cap \feasReg}\|y_{\varepsilon} - x\| \leq \|y_\varepsilon-x_\lambda\| = \frac{\varepsilon}{1 + \varepsilon}\frac{\varphi_\feasReg(e_0)}{1 - \varphi_\feasReg(x_{SL})} \|x_\lambda - x_{SL}\|.$$
    Bounding $\|x_\lambda - x_{SL}\|\leq D_0$ gives the lemma's first claim. The second claim follows similarly, noting $\|x_\lambda-e_0\|\leq D_0$ as well and applying the triangle inequality
    \begin{align*}
        \|y - e_0\| = (1 + \varepsilon) \|y_{\varepsilon} - e_0\| & \leq (1 + \varepsilon)(\|y_{\varepsilon} - x_{\lambda}\| + \|x_{\lambda} - e_0\|) \\
        &\leq D_0 + \varepsilon\left[1 + \frac{\varphi_\feasReg(e_0)}{1 - \varphi_\feasReg(x_{SL})}\right]D_0.
    \end{align*}
\end{proof}
\begin{lemma}\label{lem: lowerBoundOnfByDualUpperBound}
    Suppose $\hypo f$ is convex and $f$ is globally upper semi-continuous. Then, for any $e_0 \in \dom f,$  $0 \leq f^{\Gamma, e_0}(y) \leq 1$ implies $f(y) - 1 \geq \frac{(1 - f^{\Gamma, e_0}(y))R_0}{\|y - e_0\| + f^{\Gamma, e_0}(y) R_0}.$ \footnote{If $R_0 = \infty$ this should be taken as $f(y) \geq 1 + \frac{1 - f^{\Gamma, e_0}(y)}{f^{\Gamma, e_0}(y)}$} In addition, if $D_0 \defeq D(S_0) < \infty,$ then $f(y) \geq 1 + \frac{R_0}{D_0 + R_0} \frac{1 - f^{\Gamma, e_0}(y)}{f^{\Gamma, e_0}(y)}.$
\end{lemma}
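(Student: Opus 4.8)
The bound should come out of a single application of concavity along the ray through $e_0$ and $y$, using the interior radius $R_0$ to ``overshoot'' past $e_0$. Put $u := f^{\Gamma,e_0}(y)$; I may assume $0 < u < 1$ and $0 < R_0 < \infty$, since if $u = 1$ or $R_0 = 0$ the claimed bound reduces to $f(y) \ge 1$, which holds by~\eqref{ineq: fGammaBoundsDomainGauge} at $v = 1 \ge u$ (the hypotheses make $f$ concave and globally u.s.c., hence strictly upper radial with respect to $e_0$ via~\eqref{implication: f_convexImples_fisUpperRadial}, which is exactly the perspective monotonicity underlying~\eqref{ineq: fGammaBoundsDomainGauge}), and the cases $u = 0$ and $R_0 = \infty$ are recovered by taking limits below. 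Evaluating that same monotonicity at $v = u$ produces a point where $f$ is large: $z := e_0 + \frac{y - e_0}{u}$ has $f(z) \ge 1/u > 0$, so $z \in \dom f$ (or $f(z) = +\infty$, a case in which the convex combination below forces $f(y) = +\infty$), and $y = (1 - u)e_0 + u z$, so $\|y - e_0\| = u\|z - e_0\|$.

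Next I would exploit the room around $e_0$: since $\{x : \|x - e_0\| < R_0\} \subseteq S_0$ is open, it lies in $\interior S_0 = \interior \dom f \subseteq \dom f$, so for each small $\varepsilon > 0$ the overshoot point $w_\varepsilon := e_0 - (R_0 - \varepsilon)\frac{y - e_0}{\|y - e_0\|}$ lies in $\dom f$ and thus has $f(w_\varepsilon) \ge 0$. The geometric crux is that $y$ lies on the segment $[w_\varepsilon, z]$: parametrizing along $\frac{y - e_0}{\|y - e_0\|}$ with $e_0$ at the origin (so $w_\varepsilon$ sits at $-(R_0 - \varepsilon)$, $y$ at $\|y - e_0\|$, $z$ at $\|y - e_0\|/u$), solving $\|y-e_0\| = -\lambda(R_0-\varepsilon) + (1-\lambda)\|y-e_0\|/u$ gives $y = \lambda w_\varepsilon + (1 - \lambda)z$ with $1 - \lambda = \frac{u\,(R_0 - \varepsilon + \|y - e_0\|)}{u(R_0 - \varepsilon) + \|y - e_0\|}$. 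Concavity (convexity of $\hypo f$) applied to $w_\varepsilon, z \in \dom f$ then yields $f(y) \ge \lambda f(w_\varepsilon) + (1 - \lambda)f(z) \ge (1 - \lambda)/u$; simplifying and letting $\varepsilon \searrow 0$ gives exactly $f(y) - 1 \ge \frac{(1 - u)R_0}{\|y - e_0\| + u R_0}$, the first claim. For $R_0 = \infty$ one instead lets the overshoot distance tend to $\infty$, sending this bound to $(1-u)/u$; for $u = 0$ one replaces $z$ by $e_0 + \frac{y - e_0}{v}$ with $v \searrow 0$ (still $f \ge 1/v$ there) and passes to the limit in the identical estimate.

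For the $D_0 < \infty$ refinement I would just feed $\|y - e_0\| = u\|z - e_0\| \le u D_0$ (valid since $z, e_0 \in S_0$) into the inequality just obtained: then $\|y - e_0\| + u R_0 \le u(D_0 + R_0)$, so $f(y) - 1 \ge \frac{(1 - u)R_0}{u(D_0 + R_0)} = \frac{R_0}{D_0 + R_0}\cdot\frac{1 - u}{u}$.

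\textbf{Main obstacle.} The geometry and the concavity step are routine; the care is all at the endpoints of the analysis --- justifying $f(z) \ge 1/u$ \emph{at} $v = u$ (not merely for $v > u$) from the upper semicontinuity and monotonicity of $v \mapsto v f(e_0 + \frac{y - e_0}{v})$, dispatching $R_0 = \infty$, $u \in \{0,1\}$, and the possibility $f(z) = +\infty$, and confirming that $w_\varepsilon$ and $z$ genuinely lie in $\dom f$ so concavity is applied to endpoints with well-defined (and harmlessly handled) values. None of these is deep, but a hasty write-up would stumble on exactly these points.
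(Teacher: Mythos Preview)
Your argument is correct and follows essentially the same approach as the paper: write $y$ as a convex combination of a point far out along the ray (where $f$ is at least $1/v$ by~\eqref{ineq: fGammaBoundsDomainGauge}) and an ``overshoot'' point on the other side of $e_0$ inside $S_0$ (where $f\ge 0$), apply concavity, and pass to a limit. The only cosmetic differences are that the paper swaps the roles of your $z$ and $w_\varepsilon$ (its overshoot point sits exactly on $\partial B(e_0,R_0)\subseteq S_0$ with no $\varepsilon$, while it takes the far point at parameter $v>u$ and lets $v\searrow u$), and for the $D_0$ refinement it bounds $\|y-e_0\|\le D_0\, f^{\Gamma,e_0}(y)$ via the gauge inequality $f^{\Gamma,e_0}(y)\ge \gamma_{S_0,e_0}(y)\ge \|y-e_0\|/D_0$ rather than via your $z\in S_0$; both give the identical estimate. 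One small point to make explicit in a write-up: the case $y=e_0$ (where your $w_\varepsilon$ is undefined) is immediate from $f(e_0)\ge 1/u$.
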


\begin{proof}[Proof of Lemma~\ref{lem: lowerBoundOnfByDualUpperBound}]
Fix $y$ with $0 \leq 1 - f^{\Gamma, e_0}(y) \leq 1.$ If $R_0 = \infty$ then $f$ is strictly positive and concave on $\varspace$ which is possible only if $f$ is constant. If $f$ is constant, then so is $f^{\Gamma, e_0},$ hence $f^{\Gamma, e_0}(y) = f^{\Gamma, e_0}(e_0) = \frac{1}{f(e_0)}.$ Therefore, if $R_0 = \infty$ or $y = e_0,$ then $f(y) = 1 / f^{\Gamma, e_0}(y) = 1 + \frac{1 - f^{\Gamma, e_0}(y)}{f^{\Gamma, e_0}(y)}.$

Suppose for the rest of the proof that $y \neq e_0$ and $R_0 < \infty.$ Let $z = e_0 - R_0 \frac{y - e_0}{\|y-e_0\|} \in S_0.$ For any positive $v \geq f^{\Gamma, e_0}(y),$ let $w = e_0 + \frac{y - e_0}{v}$ and $\lambda = v\left(1 +  \frac{R_0(1 - v)}{\|y - e_0\| + R_0 v}\right)$. Then noting $y=\lambda w + (1-\lambda)z$, it follows that
\begin{align*}
    f(y) &= f(\lambda w + (1-\lambda)z)
    \geq \lambda f(w) + (1-\lambda)f(z)
    \geq \frac{\lambda}{v} = 1 +  \frac{R_0(1 - v)}{\|y - e_0\| + R_0 v}
\end{align*}
where the first inequality uses \eqref{ineq: concavityOnDomainClosure}, and the second uses \eqref{ineq: fGammaBoundsDomainGauge} and that $f(z)\geq 0$. Taking the limit as $v \to f^{\Gamma, e_0}(y)$ gives $f(y) \geq \frac{R_0(1 - f^{\Gamma, e_0}(y))}{\|y - e_0\| + R_0 f^{\Gamma, e_0}(y)}.$ The second part of the lemma follows from
\begin{align*}
    f^{\Gamma, e_0}(y) \geq \gauge{S_0}{e_0}(y) \geq \frac{1}{D_0}\|y - e_0\|,
\end{align*}
where the first inequality follows from \eqref{ineq: fGammaBoundsDomainGauge} and the second follows because if $S_0  \subset B(e_0, D_0) \defeq \set{x \in \varspace \mid \|x - e_0\| \leq D_0},$ then $\frac{1}{D_0}\|\cdot - e_0\| = \gauge{B(e_0, D_0)}{e_0} \leq \gauge{S_0}{e_0}.$ 
\end{proof}

\subsubsection{Proof of Theorem~\ref{thm: primialDualEqualityAtOne}}\label{subsubsec: third_MRD_thm_proof}
    Theorems~\ref{thm: dualUpperBoundIfPrimalAboveOne} and~\ref{thm: primalLowerBoundIfDualBelowOne} imply that $p^* = 1$ if and only if $d^* = 1.$ It remains to show that the two problems have the same solutions. To that end, suppose $x^* \in \mathcal{S}$ and $f(x^*) = p^* = 1.$ Since $f$ is strictly upper radial, $f(x^*) = 1$ implies $f^{\Gamma, e_0}(x^*) = 1.$ From $x^* \in \mathcal{S},$ we get $\varphi_\feasReg(x^*) \leq 1,$ hence $\Phi(x^*) = f^{\Gamma, e_0}(x^*) = 1 = d^*.$
    On the other hand, suppose $\Phi(y^*) = d^* = 1.$ Then, $\varphi_\feasReg(y^*) \leq 1$ hence $y^* \in \mathcal{S}$ and $f(y^*) \leq p^* = 1.$ We also have $f^{\Gamma,e_0}(y^*) \leq 1$ which, by the second part of Theorem~\ref{thm: primalLowerBoundIfDualBelowOne}, implies $f(y^*) \geq 1 + \frac{R_0}{R_0+D_0}(1 - f^{\Gamma,e_0}(y^*)) \geq 1.$ Therefore, $f(y^*) = 1 = p^*,$ and the proof is complete.

    \section{A Parameter-Free, Optimal, Parallel MultiRadial Method} \label{sec:parallel-MRM}

The previously discussed multiradial method in Algorithm~\ref{alg: theMRM} required unrealistic knowledge to compute the needed $\delta_k.$ In this section, we present a parameter-free adaption of this method, using the parallel restarting ideas of~\cite{Renegar2022}, which we call the Parallel MultiRadial Method ($||\mbox{-}\mathtt{MRM}$).

Conceptually, the $||\mbox{-}\mathtt{MRM}$ can be thought of as consisting of $N$ parallel, but not independent, instances of Algorithm~\ref{alg: theMRM}. In each $l$-th instance, Algorithm~\ref{alg: theMRM} is run with a constant accuracy sequence $\set{\delta^{(l)}_k}_{k=0}^{\infty} = \set{\delta^{(l)}}.$ All instances start with the same initial data $(f, e_0),$ convex identifier $\varphi_\feasReg$, feasible $x_0$, and $\mathtt{fom}.$ With a slight abuse of notation, we denote each instance by $\mathtt{fom}^{(l)}.$ The crucial part of $\pMRM$ is that the instances cooperate by sharing their feasible iterates with one another. In particular, one instance, say $\mathtt{fom}^{(1)},$ can use an iterate of another, say $\mathtt{fom}^{(2)},$ to make the update in step 6 of Algorithm~\ref{alg: theMRM}, if such an iterate is sufficiently accurate for $\mathtt{fom}^{(1)}.$ In the end, the best feasible iterate among all is returned as the solution. The number of instances $N$ and the respective target accuracy $\delta^{(l)}$ for each instance can be treated as inputs to the method. A concrete implementation of the $\pMRM$ is given in Algorithm~\ref{alg: ||-MRM}. For simplicity, Algorithm~\ref{alg: ||-MRM} takes $b \geq 2$ and $N$ as inputs and automatically sets $\delta^{(l)} = b^{-l}.$ Motivated by Theorem~\ref{thm: approxIdealSeqConvergence}, we find that setting $N = O(\log_b(1/\varepsilon))$ is sufficient to reach $\varepsilon$-accuracy.


More formally, Algorithm~\ref{alg: ||-MRM} produces iterates $y^{(l)}_i,$ $i = 0, 1, \dots,$ $l = 1, \dots, N.$ For each $l,$ the next iterate $y^{(l)}_{i+1}$ is produced from the previous one by a single step of $\mathtt{fom}^{(l)} = \mathtt{fom},$ i.e., $y^{(l)}_{i+1} = \mathtt{fom}^{(l)}\mathtt{.step}(y^{(l)}_{i},\delta^{(l)},\Phi_{\tau^{(l)}_i}).$ These steps can be done in parallel or, as described in Algorithm~\ref{alg: ||-MRM}, sequentially. Then the best iterate among all past and present feasible iterates, denoted $y^{best}_{i+1},$ is computed. With $\tau^{best}_{i+1} = 1/f(y^{best}_{i+1}),$ each instance $\mathtt{fom}^{(l)}$ for which $y^{best}_{i+1}$ meets their restarting criteria (e.g., $\tau^{best}_{i+1} \leq \frac{1}{1+\delta^{(l)}}\tau^{(l)}_i$), will set their $\tau^{(l)}_{i+1}$ as $\tau^{best}_{i+1}$ and reinitialize $\mathtt{fom}^{(l)}$ at $y^{best}_{i+1}$. We refer to this event as $\mathtt{fom}^{(l)}$ restarting.



\begin{algorithm}[h]
\caption{The Parallel MultiRadial Method ($||\mbox{-}\mathtt{MRM}$)}\label{alg: ||-MRM}
\begin{algorithmic}[1]
\REQUIRE $(f, e_0),$ $x_0 \in \dom f \cap \feasReg$, a convex identifier $\varphi_\feasReg,$ $b \geq 2,$ $N>0$, a first-order method $\mathtt{fom}$
\STATE Set $\delta^{(l)} = b^{-l}$ for each $l=1, \dots, N$
\STATE Set each $\tau_0^{(l)} = 1/f(x_0)$ and  $y_0^{(l)} = \mathtt{fom}^{(l)}\mathtt{.initialize}(x_0, \delta^{(l)}_0, \Phi_{\tau^{(l)}_0})$
\FOR{$i = 0, 1, 2, \dots,$}
    \FOR{$l= 1\dots N$}
        \STATE $\tau_{i+1}^{(l)} = \tau_{i}^{(l)}$ \hfill (Each $\mathtt{fom}^{(l)}$ takes one step)
        \STATE $y^{(l)}_{i+1} = \mathtt{fom}^{(l)}\mathtt{.step}(y^{(l)}_{i},\delta^{(l)},\Phi_{\tau^{(l)}_i})$ 
    \ENDFOR
    \STATE $y^{best}_{i+1} = \argmin\{1/f(y^{(l)}_{i'}) \mid y^{(l)}_{i'} \text{ is feasible and } i'\leq i+1\}$ \hfill (Find the best iterate seen thus far)
    \STATE $\tau^{best}_{i+1} = 1/f(y^{best}_{i+1})$
    \FOR{each $l=1\dots N$ with $\tau^{best}_{i+1} \leq \frac{1}{1+\delta^{(l)}}\tau^{(l)}_i$} 
        \STATE Set $\tau_{i+1}^{(l)} = \tau^{best}_{i+1}$ \hfill (Restart $\mathtt{fom}^{(l)}$ if satisfied by $y^{best}_{i+1}$)
        \STATE Set $y_{i+1}^{(l)} = \mathtt{fom}^{(l)}\mathtt{.initialize}(y^{best}_{i+1}, \delta^{(l)}, \Phi_{\tau^{(l)}_{i+1}})$ 
    \ENDFOR
\ENDFOR
\end{algorithmic}
\end{algorithm}

\subsection{Convergence Guarantees and Theory}
For ease of exposition, we define a few additional quantities not explicitly used in $||\mbox{-}\mathtt{MRM}$:
For each instance of the first-order method $l$, we let $i^{(l)}_0<i^{(l)}_1<i^{(l)}_2<\dots<i^{(l)}_{K_l}$ be the sequence of iterations where a restart occurred (i.e., $\tau^{best}_{i^{(l)}_k+1} \leq \frac{1}{1+\delta^{(l)}}\tau^{(l)}_{i^{(l)}_k}$). Note there must only be a finite number of such events. Each restart has
$$\tau^{(l)}_{i^{(l)}_k +1} \leq \frac{1}{1+\delta^{(l)}}\tau^{(l)}_{i^{(l)}_k} \ . $$
Inductively applying this and noting $1/f(y^{best}_{i+1}) \geq 1/p^*$, the total number of restarts by first-order method instance $l$ is at most $K_l \leq \log(p^*/f(x_0))/\log(1 + \delta^{(l)}) $.
For each iteration $i$, we say the {\it critical first-order method instance} $l_i$ is the one with $\frac{b\delta^{(l_i)}}{\rho} \leq 1 - d(\tau^{best}_{i}) < \frac{b^2\delta^{(l_i)}}{\rho}.$

The following three quantities are useful in formalizing our convergence rate guarantees for Algorithm~\ref{alg: ||-MRM}. They correspond to bounds on how long it takes for the instance $l$ to be guaranteed to reach a $\delta^{(l)}$-minimizer of any of its subproblem, the first parallel instance that could be critical, and given some $\varepsilon>0$, the last parallel instance that can be critical before an $\varepsilon$-minimizer is found.
\begin{align}
    K^{(l)}_{\mathtt{fom}} &\defeq \max_{i \geq 0}K_{\mathtt{fom}}(D_0, \delta^{(l)}/\rho, \Phi_{\tau^{(l)}_i}) \\
    l_0 & \defeq \min\set{l = 1, 2, \dots \mid \frac{b \delta^{(l)}}{\rho} < 1} = \lfloor\log_b(b^2/\rho)\rfloor \\
    \tilde{N}(\varepsilon) & \defeq \max\set{l = 1, 2, \dots \mid \frac{b^2\delta^{(l)}}{\rho} \geq \frac{c_{\tau_0}\varepsilon}{p^*}} = \left\lfloor\log_b\left(\frac{b^2p^*}{c_{\tau_0}\rho \varepsilon}\right)\right\rfloor \ .
\end{align}

Based on these, we have the following convergence guarantee (proof deferred to Section~\ref{sec:proofs}), establishing that $\pMRM$ needs a logarithmic number $O(\tilde N(\varepsilon))$ of multiradial dual problem solves, each requiring a number of steps controlled by the chosen first-order method, $K^{(l)}_{\mathtt{fom}}$.
\begin{theorem}\label{thm: main||-MRMtheorem}
    Suppose Assumptions~\ref{assmpn: assumption1}~-~\ref{assmpn: assumption4} hold and let $\mathtt{fom}$ be given. Then for all $\varepsilon > 0,$ Algorithm~\ref{alg: ||-MRM} with $\mathtt{fom},$ $b \geq 2,$ and $N \geq \log_b(\frac{bp^*}{c_{\tau_0}\rho \varepsilon})$ has 
    $$
    p^* - f(y^{best}_i) \leq \varepsilon \quad \text{and} \quad y^{best}_i \text{ feasible}
    $$
    provided that\footnote{We use the convention that $\sum_{l = a}^{b} K^{(l)}_{\mathtt{fom}} = 0$ if $b < a.$}  
    $$
    i \geq \frac{\log(\tau_0 p^*)}{\log(1 + \frac{\rho}{b^2})} K^{(l_0)}_{\mathtt{fom}} + \frac{5b^2(1 + \frac{\rho}{b^2})}{4\rho c_{\tau_0}}\sum_{l = l_0 + 1}^{\tilde N(\varepsilon)} K^{(l)}_{\mathtt{fom}} \ .
    $$
\end{theorem}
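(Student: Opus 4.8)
The plan is to adapt the parallel-restarting analysis of Renegar~\cite{Renegar2022}, organizing everything around the critical instance $l_i$ and charging iterations to the subproblem budgets $K^{(l)}_{\mathtt{fom}}$. First I would record the structural invariants of Algorithm~\ref{alg: ||-MRM}: the best multiplier $\tau^{best}_i$ is nonincreasing with $1/p^* \le \tau^{best}_i \le \tau_0$; between consecutive restarts of instance $l$ the value $\tau^{(l)}$ is frozen while $\tau^{best}$ must drop by a factor at least $1+\delta^{(l)}$ before the next restart of $l$, so $\tau^{best}_i \le \tau^{(l)}_i \le (1+\delta^{(l)})\tau^{best}_i$ for all $i$; and $d(\cdot)$ is nonincreasing in $\tau$ (a larger $\tau$ shrinks $(\tau f)^{\Gamma,e_0}$, hence $\Phi_\tau$, pointwise), so every reference multiplier stays $\ge 1/p^*$ and Theorem~\ref{thm: dualUpperBoundIfPrimalAboveOne} applies to every rescaled subproblem. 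I would also observe that each reinitialization point ($x_0$, or some $y^{best}$, which has $f>0$) and each minimizer of $\Phi_{\tau^{(l)}_i}$ (which lies in the sublevel set $\{\Phi_{\tau^{(l)}_i}\le 1\}$) belongs to $S_0$, so the initialization-to-minimizer distance is at most $D_0$ and the guarantee~\eqref{eq: subroutine-convergence} is triggered within $K^{(l)}_{\mathtt{fom}}$ steps of any subproblem episode run by instance $l$.

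Second, I would reduce the goal to driving $1 - d(\tau^{best}_i)$ small. Since $f(y^{best}_i) = 1/\tau^{best}_i \le p^*$ and $\tau^{best}_i \ge 1/p^*$, Theorem~\ref{thm: dualUpperBoundIfPrimalAboveOne} in rescaled form together with $c_{\tau^{best}_i} \ge c_{\tau_0}$ gives $p^* - f(y^{best}_i) = \tfrac{p(\tau^{best}_i)-1}{\tau^{best}_i} \le p^*\tfrac{1-d(\tau^{best}_i)}{c_{\tau_0}}$, so it suffices to reach $1 - d(\tau^{best}_i) < \tfrac{c_{\tau_0}\varepsilon}{p^*}$. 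From the defining inequalities $\tfrac{b\delta^{(l_i)}}{\rho}\le 1-d(\tau^{best}_i)<\tfrac{b^2\delta^{(l_i)}}{\rho}$ one checks that $l_i\ge l_0$ always, that $l_i$ is nondecreasing (because $1-d(\tau^{best}_i)$ is nonincreasing), and that the moment $l_i>\tilde N(\varepsilon)$ one has $1-d(\tau^{best}_i)<\tfrac{b^2\delta^{(l_i)}}{\rho}<\tfrac{c_{\tau_0}\varepsilon}{p^*}$ while $y^{best}_i$ is feasible by construction; the hypothesis on $N$ ensures $\tilde N(\varepsilon)\le N$, so all these critical instances exist. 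Hence the total iteration count is at most $\sum_{l=l_0}^{\tilde N(\varepsilon)}(\text{number of iterations with } l_i=l)$.

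The heart of the argument is bounding the iterations spent at each fixed critical value $l$. While $l_i=l$ one has $1-d(\tau^{(l)}_i)\ge 1-d(\tau^{best}_i)\ge \tfrac{b\delta^{(l)}}{\rho}$, so a $\delta^{(l)}/\rho$-minimizer $y$ of $\Phi_{\tau^{(l)}_i}$ — produced within $K^{(l)}_{\mathtt{fom}}$ steps of a reinitialization — satisfies $\Phi_{\tau^{(l)}_i}(y)-d(\tau^{(l)}_i)\le \tfrac1b\bigl(1-d(\tau^{(l)}_i)\bigr)$, so Corollary~\ref{cor: fomWorksIfDeltaSmall} with $r=1-\tfrac1b$ makes $y$ feasible with $\tau^{(l)}_i f(y)-1\ge (b-1)\delta^{(l)}\ge\delta^{(l)}$, forcing a restart of instance $l$. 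Thus each of instance $l$'s subproblem episodes during this window lasts at most $K^{(l)}_{\mathtt{fom}}$ steps, and each restart of $l$ lowers $\tau^{best}$ by a factor at least $1+\delta^{(l)}$. For $l>l_0$ the critical window $1-d(\tau^{best})\in[\tfrac{b\delta^{(l)}}{\rho},\tfrac{b^2\delta^{(l)}}{\rho})$ translates, via $c_{\tau_0}[p(\tau)-1]\le 1-d(\tau)\le\tfrac1\rho[p(\tau)-1]$, into a multiplier window of ratio at most $\tfrac{1+b^2\delta^{(l)}/(\rho c_{\tau_0})}{1+b\delta^{(l)}}$, so the number of restarts of instance $l$ while critical is at most $\tfrac{\log(1+b^2\delta^{(l)}/(\rho c_{\tau_0}))}{\log(1+\delta^{(l)})}$; using $\log(1+x)\le x$ in the numerator, $\log(1+x)\ge x-x^2/2$ in the denominator, and $\delta^{(l)}\le\delta^{(l_0+1)}<\rho/b^2$, this is at most $\tfrac{5b^2(1+\rho/b^2)}{4\rho c_{\tau_0}}$, giving the Term~2 contribution after multiplying by $K^{(l)}_{\mathtt{fom}}$ and summing. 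For $l=l_0$ there is no upper end to the window — the algorithm may begin with $\tau^{best}_0=\tau_0$ arbitrarily large — so instead I use the total-restart bound $K_{l_0}\le\tfrac{\log(\tau_0 p^*)}{\log(1+\delta^{(l_0)})}\le\tfrac{\log(\tau_0 p^*)}{\log(1+\rho/b^2)}$ (from $\delta^{(l_0)}\ge\rho/b^2$) and multiply by $K^{(l_0)}_{\mathtt{fom}}$ to obtain Term~1. Adding the two contributions completes the proof.

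I expect the main obstacle to be the bookkeeping in this last step: disentangling the two multipliers $\tau^{(l)}$ and $\tau^{best}$ (which differ by up to $1+\delta^{(l)}$, are updated at different iterations, and whose updates may be driven by other instances' iterates), choosing exactly the quantity in which to measure the critical window so that the per-restart factor $1+\delta^{(l)}$ and the window ratio cancel into something independent of $l$, and carrying the elementary logarithm estimates through precisely enough to land on the explicit constant $\tfrac{5b^2(1+\rho/b^2)}{4\rho c_{\tau_0}}$ while keeping the $l=l_0$ case cleanly isolated in Term~1.
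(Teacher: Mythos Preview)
Your proposal is correct and follows essentially the same route as the paper. The paper packages the argument as an intermediate Lemma (time for $1-d(\tau^{best})$ to drop below $b\delta^{(l)}/\rho$, tracking restarts of instance $l$ via the decrease of $p(\tau^{(l)})$) plus a Theorem summing these bounds over the partition $\{b\delta^{(l)}/\rho\}_{l=l_0}^{N}$, then converts $1-d(\tau^{best})$ to a primal gap exactly as you do. Your critical-instance framing is the same partition, and your ``episode $\leq K^{(l)}_{\mathtt{fom}}$'' argument via Corollary~\ref{cor: fomWorksIfDeltaSmall} with $r=1-1/b$ matches the paper's episode bound; the only cosmetic difference is that the paper counts restarts through $p(\tau^{(l)})$ (picking up an extra $p(\tau_0)\delta^{(l)}$ term from $\tau^{(l)}\le(1+\delta^{(l)})\tau^{best}$) while you count them through the $\tau^{best}$-window directly, and the paper uses $\delta/\ln(1+\delta)\le 5/4$ where you use $\ln(1+\delta)\ge\delta-\delta^2/2$. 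Both land on the stated constant.
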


To illustrate the reach of this theorem, we present corollaries for three pairs of first-order methods (previously introduced in Section~\ref{section: prelims}) and appropriately chosen convex identifiers. Detailed proofs of these corollaries are deferred to Appendix~\ref{app:proofs}. Suppose points $e_j\in\mathrm{int\ } S_j$ are known.
First, noting that the gauges $\gamma_{S_j,e_j}$ are uniformly Lipschitz, one may reasonably consider
\begin{align}
    \mathtt{fom} = \mathtt{subgrad} \qquad \text{and} \qquad \varphi_\feasReg = \max\{\gamma_{S_1,e_1},\dots,\gamma_{S_m,e_m}\} \ . \label{eq:subgrad-setup}
\end{align}
A projected subgradient method requires $O(1/\varepsilon^2)$ subgradient evaluations to minimize a generic Lipschitz function. Applying Theorem~\ref{thm: main||-MRMtheorem}, we find a parallel multiradial subgradient method also only requires $O(1/\varepsilon^2)$ subgradient evaluations, up to a parallelizable factor of $N = O(\log(1/\varepsilon))$.
\begin{corollary}\label{cor: subgradRateWith||-MRM}
    Let Assumptions~\ref{assmpn: assumption1}~-~\ref{assmpn: assumption4} hold. If $\mathtt{fom}$ and $\varphi_\feasReg$ are set as in~\eqref{eq:subgrad-setup}, then, for all $\varepsilon > 0,$ Algorithm~\ref{alg: ||-MRM} with $N \geq \log_b(\frac{bp^*}{c_{\tau_0}\rho \varepsilon})$ finds a feasible $\varepsilon$-minimizer within $O(1/\varepsilon^2)$ iterations. 
\end{corollary}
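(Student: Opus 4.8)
The plan is to specialize the master convergence bound of Theorem~\ref{thm: main||-MRMtheorem} to the subgradient method and the canonical convex identifier $\varphi_\feasReg = \max\{\gamma_{S_1,e_1},\dots,\gamma_{S_m,e_m}\}$, then check the iteration count simplifies to $O(1/\varepsilon^2)$ up to the parallelizable factor $N$. First I would record that, under Assumptions~\ref{assmpn: assumption1}–\ref{assmpn: assumption4} with known $e_j\in\interior S_j$, each gauge $\gamma_{S_j,e_j}$ is $1/R_{e_j}(S_j)$-Lipschitz by implication~\eqref{implication: GaugesAreLipschitz}, and $f^{\Gamma,e_0}$ is $1/R_0$-Lipschitz by~\eqref{implication:RLipschitzOfConvexDual}; hence $\Phi_{\tau}$, a finite maximum of these, is $1/R$-Lipschitz with $R\defeq\min_{j=0,\dots,m}R_{e_j}(S_j)$, uniformly in $\tau\geq 1/p^*$ (rescaling $f$ by $\tau\geq 1/p^*$ only shrinks the relevant radial transform's Lipschitz constant, a point already noted after~\eqref{def: conversionConstants}). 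Thus the subgradient method applied to each $\Phi_{\tau^{(l)}_i}$ has the convergence guarantee~\eqref{eq: subroutine-convergence} with $K_{\mathtt{subgrad}}(D_0,\delta^{(l)}/\rho,\Phi_{\tau^{(l)}_i}) = \frac{M^2D_0^2}{(\delta^{(l)}/\rho)^2} = \frac{\rho^2 D_0^2}{R^2(\delta^{(l)})^2}$, which is independent of $i$, so $K^{(l)}_{\mathtt{subgrad}} = \frac{\rho^2D_0^2}{R^2(\delta^{(l)})^2} = \frac{\rho^2D_0^2}{R^2}b^{2l}$.

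Next I would substitute this into the iteration bound of Theorem~\ref{thm: main||-MRMtheorem}. The first term is $\frac{\log(\tau_0 p^*)}{\log(1+\rho/b^2)}K^{(l_0)}_{\mathtt{subgrad}}$, a constant independent of $\varepsilon$ (recall $l_0=\lfloor\log_b(b^2/\rho)\rfloor$ is fixed). The second term is $\frac{5b^2(1+\rho/b^2)}{4\rho c_{\tau_0}}\sum_{l=l_0+1}^{\tilde N(\varepsilon)}\frac{\rho^2D_0^2}{R^2}b^{2l}$. The sum is a geometric series in $b^{2l}$, dominated (up to the constant factor $\frac{b^2}{b^2-1}\leq 2$) by its largest term $b^{2\tilde N(\varepsilon)}$. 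Since $\tilde N(\varepsilon) = \lfloor\log_b(b^2p^*/(c_{\tau_0}\rho\varepsilon))\rfloor$, we get $b^{2\tilde N(\varepsilon)} \leq (b^2p^*/(c_{\tau_0}\rho\varepsilon))^2 = O(1/\varepsilon^2)$. Collecting the $\varepsilon$-independent constants (which depend only on $b,\rho,D_0,R,c_{\tau_0},p^*,\tau_0$) shows the whole iteration count is $O(1/\varepsilon^2)$. Finally I would note that the requirement $N\geq\log_b(bp^*/(c_{\tau_0}\rho\varepsilon)) = O(\log(1/\varepsilon))$ is exactly the hypothesis, and since the $N$ instances can be run in parallel this is the claimed parallelizable logarithmic overhead; each iteration of $\pMRM$ then computes at most one subgradient of $f$ or one normal vector of a constraint per instance (using that a subgradient of a finite max $\Phi_\tau$ is a subgradient of one attaining component, as discussed under~\eqref{eq:subgradient-method}).

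The step I expect to need the most care is verifying that $K_{\mathtt{subgrad}}(D_0,\delta^{(l)}/\rho,\Phi_{\tau^{(l)}_i})$ is genuinely uniform over all the rescalings $\tau^{(l)}_i$ that arise — i.e. that the Lipschitz constant of $\Phi_\tau$ does not degrade as $\tau$ decreases toward $1/p^*$. This follows because the rescaling acts on $f^{\Gamma,e_0}$ via $(\tau f)^{\Gamma,e_0}(y) = \tfrac{1}{\tau}f^{\Gamma,e_0}(e_0+\tau(y-e_0))$ (from the scaling calculus $(\lambda f)^{\Gamma,e} = \tfrac1\lambda f^{\Gamma,e}\circ\mathfrak t_e\circ\lambda\mathfrak t_{-e}$), whose Lipschitz constant is $\tau\cdot\tfrac1\tau\cdot\tfrac1{R_0} = \tfrac1{R_0}$ — unchanged — while the identifier $\varphi_\feasReg$ is untouched by $\tau$; hence $\Phi_\tau$ is $1/R$-Lipschitz for every $\tau>0$, and in particular $K^{(l)}_{\mathtt{subgrad}}$ is finite and $i$-independent. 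Everything else is bookkeeping of geometric-series constants.
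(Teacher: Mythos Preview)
Your proposal is correct and follows essentially the same route as the paper's proof: you invoke Theorem~\ref{thm: main||-MRMtheorem}, use the uniform $1/R$-Lipschitz bound on $\Phi_\tau$ to get $K^{(l)}_{\mathtt{subgrad}}=(\rho b^l D_0/R)^2$, observe the $l_0$-term is constant in $\varepsilon$, and bound the geometric sum by its largest term $b^{2\tilde N(\varepsilon)}=O(1/\varepsilon^2)$. Your extra paragraph verifying the Lipschitz constant of $(\tau f)^{\Gamma,e_0}$ is $\tau$-independent via the scaling calculus is a nice addition the paper leaves implicit (it simply asserts ``$\Phi_\tau$ is $1/R$-Lipschitz for all $\tau>0$''); an even quicker justification is that $\dom(\tau f)=\dom f$, so implication~\eqref{implication:RLipschitzOfConvexDual} applies directly to $\tau f$ with the same $R_0$.
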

Much like prior radial methods~\cite{Renegar2016,Grimmer2017}, no Lipschitz continuity assumptions are needed and projections are entirely avoided, instead just relying on linesearches and normal vectors. Unlike these prior radial methods, the usage of a common reference point $e\in\mathrm{int\ }\feasReg$ is avoided. Instead, separate centers are utilized, which also facilitates the potential for smaller Lipschitz constants for each gauge.

If, additionally, the objective $f$ is smooth and the sets $S_j$ are smooth and compact, accelerated methods can be applied. A set is $\beta$-smooth if its unit normal vectors are $\beta$-Lipschitz continuous on the set's boundary. Recently, \cite[Corollary 3.2]{liu2023gauges} showed every $\beta$-smooth compact set $S_j$ has $\frac{1}{2}\gamma_{S_j,e_j}^2$ as a $O(\beta)$-smooth function. Using this line of reasoning, one can also show that $f^{\Gamma, e_0}$ is $O(\beta)$-smooth if $f$ is $\beta$-smooth and $\dom f$ is compact. This motivates the usage of accelerated smoothing and generalized gradient methods applied with gauges squared occurring in the identifier. We consider the following two settings:
\begin{align}
    \mathtt{fom} = \mathtt{smooth} \qquad &\text{and} \qquad \varphi_{\feasReg}=\max\{\varphi_{S_1},\dots,\varphi_{S_m}\}\label{eq:smooth-setup}\\
    &\text{where}\quad \varphi_{S_j}(x) = \begin{cases} \gauge{S_j}{e_j}(x) & \text{if } \gauge{S}{e}(x) > 1 \\ \frac{1}{2}\gauge{S_j}{e_j}^2(x) + \frac{1}{2} & \text{otherwise} \end{cases} \ ,\nonumber  \\
    \mathtt{fom} = \mathtt{genGrad} \qquad &\text{and} \qquad \varphi_\feasReg = \max\set{\gauge{S_1}{e_1}^2, \dots, \gauge{S_m}{e_m}^2} \ .\label{eq:gengrad-setup} 
\end{align}
In both cases, up to a logarithmic term, these methods' $O(1/\varepsilon)$ and $O(1/\sqrt{\varepsilon})$ convergence rates are preserved, now providing new accelerated projection-free methods. For ease, our corollaries assume $f$ is twice continuously differentiable. However, this assumption is not needed because, as pointed out earlier, $f^{\Gamma, e_0}$ is smooth as long as $f$ is smooth and $\dom f$ is compact.
\begin{corollary}\label{cor: smoothRateWith||-MRM}
    Let Assumptions~\ref{assmpn: assumption1}~-~\ref{assmpn: assumption4} hold and $f$ be twice continuously differentiable. If $f$ is $\beta$-smooth and each $S_j$ is $\beta$-smooth and compact, and $\mathtt{fom}$ and $\varphi_\feasReg$ are set as in~\eqref{eq:smooth-setup}, then, for all $\varepsilon > 0,$ Algorithm~\ref{alg: ||-MRM} with $N \geq \log_b(\frac{bp^*}{c_{\tau_0}\rho \varepsilon})$ finds a feasible $\varepsilon$-minimizer within $O(1/\varepsilon)$ iterations.
\end{corollary}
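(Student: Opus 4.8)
This corollary is an instance of Theorem~\ref{thm: main||-MRMtheorem} with $\mathtt{fom}=\mathtt{smooth}$, so the plan is in three parts: (i) verify that $\varphi_\feasReg = \max\{\varphi_{S_1},\dots,\varphi_{S_m}\}$ of~\eqref{eq:smooth-setup} is a valid convex identifier of $\feasReg$; (ii) show the components of every subproblem $\Phi_\tau$ that $\pMRM$ encounters are simultaneously $O(\beta)$-smooth and $O(1/R)$-Lipschitz, uniformly over the $\tau$'s produced, so that the $\mathtt{smooth}$ subroutine's guarantee $K_{\mathtt{smooth}}(D,\varepsilon',h)=2\sqrt{2LD^2/\varepsilon'+4M^2D^2\log(m+1)/(\varepsilon')^{2}}$ is available; and (iii) substitute this $K_{\mathtt{smooth}}$ into the iteration count of Theorem~\ref{thm: main||-MRMtheorem} and sum the resulting geometric series.

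For (i), write $\varphi_{S_j}=\psi\circ\gauge{S_j}{e_j}$ with $\psi(t)=\tfrac12t^2+\tfrac12$ for $t\le1$ and $\psi(t)=t$ for $t>1$. One checks $\psi$ is convex, nondecreasing, continuously differentiable (the two one-sided derivatives at $t=1$ both equal $1$), $1$-Lipschitz on $[0,\infty)$, and obeys $\psi(t)<1\iff t<1$ on $[0,\infty)$. Since $\gauge{S_j}{e_j}$ is finite, continuous and convex (as $e_j\in\interior S_j$), the composition $\varphi_{S_j}$ is convex and continuous with $\{x:\varphi_{S_j}(x)<1\}=\{x:\gauge{S_j}{e_j}(x)<1\}=\interior S_j$. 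Taking the pointwise maximum and using $\interior\feasReg=\bigcap_j\interior S_j$ (which always holds, as a finite intersection of open subsets of $\feasReg$) gives that $\varphi_\feasReg$ is a convex identifier, so Theorem~\ref{thm: main||-MRMtheorem} applies once its smoothness hypotheses are met.

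For (ii), the $j=0$ component $(\tau f)^{\Gamma,e_0}$ is $1/R_0$-Lipschitz by~\eqref{implication:RLipschitzOfConvexDual}; and since $\tau f$ is $\tau\beta$-smooth with bounded domain $S_0$, \cite[Proposition 2]{radial2} together with the rescaling identity $(\tau f)^{\Gamma,e_0}=\tfrac1\tau f^{\Gamma,e_0}\circ\mathfrak t_{e_0}\circ\tau\mathfrak t_{-e_0}$ makes it $O(\tau\beta)$-smooth, hence uniformly $O(\tau_0\beta)$ because every $\tau$ produced by Algorithm~\ref{alg: ||-MRM} equals $1/f(\cdot)\in[1/p^*,\tau_0]$. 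For $j\ge1$, on $S_j$ we have $\varphi_{S_j}=\tfrac12\gauge{S_j}{e_j}^2+\tfrac12$, which is $O(\beta)$-smooth by \cite[Corollary 3.2]{liu2023gauges} (each $S_j$ being $\beta$-smooth and compact), while off $S_j$ we have $\varphi_{S_j}=\gauge{S_j}{e_j}$, which is $O(1/R_{e_j}(S_j))$-Lipschitz by~\eqref{implication: GaugesAreLipschitz}; keeping this linear tail rather than the quadratic one is precisely what makes $\varphi_{S_j}$ globally Lipschitz, as $\mathtt{smooth}$ demands. The $C^1$ matching across $\{\gauge{S_j}{e_j}=1\}$, and the boundedness of the relevant $\Phi_\tau$-sublevel sets (and of the restart points, which lie in $S_0\cap\feasReg$) supplied by Lemma~\ref{lem: dualHasBoundedLevelSets}, then let one conclude each $\varphi_{S_j}$ is globally $O(\beta)$-smooth and $O(1/R)$-Lipschitz with $R=\min_j R_{e_j}(S_j)$. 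The main obstacle I anticipate is exactly this last bookkeeping: bounding the Lipschitz modulus of $\nabla\gauge{S_j}{e_j}$ on $\{\gauge{S_j}{e_j}\ge1\}$ in terms of the $\beta$-smoothness of $\partial S_j$ and the geometric radii/diameters, and reconciling the Hessians of the two pieces across the gluing surface; everything else reuses constants already catalogued in Section~\ref{section: prelims}.

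For (iii), with $\delta^{(l)}=b^{-l}$ the above yields $K^{(l)}_{\mathtt{smooth}}:=\max_iK_{\mathtt{smooth}}(D_0,\delta^{(l)}/\rho,\Phi_{\tau^{(l)}_i})=O(b^l)$, the $M$-Lipschitz term $O(MD_0\sqrt{\log(m+1)}\,\rho b^l)$ dominating the $O(\sqrt{\beta D_0^2\rho\,b^l})$ smooth term, while $l_0=\lfloor\log_b(b^2/\rho)\rfloor$ is a constant independent of $\varepsilon$. Substituting into Theorem~\ref{thm: main||-MRMtheorem}, the first summand $\frac{\log(\tau_0p^*)}{\log(1+\rho/b^2)}K^{(l_0)}_{\mathtt{smooth}}$ is $O(1)$ in $\varepsilon$, and the second is $\frac{5b^2(1+\rho/b^2)}{4\rho c_{\tau_0}}\sum_{l=l_0+1}^{\tilde N(\varepsilon)}K^{(l)}_{\mathtt{smooth}}=O\!\big(\sum_{l\le\tilde N(\varepsilon)}b^l\big)=O\!\big(b^{\tilde N(\varepsilon)}\big)=O\!\big(p^*/(c_{\tau_0}\rho\varepsilon)\big)=O(1/\varepsilon)$, using $\tilde N(\varepsilon)=\lfloor\log_b(b^2p^*/(c_{\tau_0}\rho\varepsilon))\rfloor$. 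Hence after $O(1/\varepsilon)$ iterations (with the stated $N=O(\log_b(1/\varepsilon))$ parallel instances) one obtains a feasible $y^{best}_i$ with $p^*-f(y^{best}_i)\le\varepsilon$, which is the claim.
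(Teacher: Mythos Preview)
Your proposal is correct and follows the same high-level architecture as the paper: invoke Theorem~\ref{thm: main||-MRMtheorem}, show the components of each $\Phi_\tau$ are uniformly $O(\beta)$-smooth and $O(1/R)$-Lipschitz so that $K^{(l)}_{\mathtt{smooth}}=O(b^l)$, and sum the geometric series to get $O(1/\varepsilon)$. The part-(iii) arithmetic and the treatment of $(\tau f)^{\Gamma,e_0}$ match the paper's proof essentially line-for-line.

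The genuine difference is in how you establish the global smoothness of $\varphi_{S_j}$. You propose a piecewise argument: cite \cite[Corollary~3.2]{liu2023gauges} for the quadratic branch on $\{\gauge{S_j}{e_j}\le 1\}$, then separately bound the Lipschitz modulus of $\nabla\gauge{S_j}{e_j}$ on $\{\gauge{S_j}{e_j}\ge 1\}$ from the $\beta$-smoothness of $\partial S_j$, and glue across the $C^1$ interface. This works (one writes $\nabla\gauge{S_j}{e_j}(z)=\nu(z)/\langle\nu(z),z-e_j\rangle$ on $\partial S_j$, uses $\langle\nu,z-e_j\rangle\ge R_{e_j}(S_j)$ and the $\beta$-Lipschitzness of $\nu$, and then propagates along rays), but it is bookkeeping-heavy, exactly as you flag. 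The paper instead proves in one stroke (Lemma~\ref{lem: HuberGaugeSmoothing}) that $\varphi_{S_j}-\tfrac12$ is the infimal convolution of $\gauge{S_j}{e_j}$ with $\tfrac12\gauge{S_j}{e_j}^2$, and since infimal convolution inherits the smoothness constant of the smoother summand, $\varphi_{S_j}$ is globally $L$-smooth with the \emph{same} $L$ as $\tfrac12\gauge{S_j}{e_j}^2$. This Moreau-envelope-style identity sidesteps the piecewise gluing entirely and gives a cleaner constant. Your appeal to Lemma~\ref{lem: dualHasBoundedLevelSets} for sublevel-set boundedness is a red herring here: the $\mathtt{smooth}$ guarantee is stated for global $L,M$, and the paper neither restricts to a bounded region nor needs to.
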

\begin{corollary}\label{cor: genGradRateWith||-MRM}
    Let Assumptions~\ref{assmpn: assumption1}~-~\ref{assmpn: assumption4} hold and $f$ be twice continuously differentiable. If $f$ is $\beta$-smooth and each $S_j$ is $\beta$-smooth and compact, and $\mathtt{fom}$ and $\varphi_\feasReg$ are set as in~\eqref{eq:gengrad-setup}, then, for all $\varepsilon > 0,$ Algorithm~\ref{alg: ||-MRM} with $N \geq \log_b(\frac{bp^*}{c_{\tau_0}\rho \varepsilon})$ finds a feasible $\varepsilon$-minimizer within $O(1/\sqrt{\varepsilon})$ iterations.
    
\end{corollary}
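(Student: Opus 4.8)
The plan is to apply Theorem~\ref{thm: main||-MRMtheorem} with $\mathtt{fom}=\mathtt{genGrad}$ and $\varphi_\feasReg=\max\{\gauge{S_1}{e_1}^2,\dots,\gauge{S_m}{e_m}^2\}$. Two things must be established. First, that every subproblem objective $\Phi_\tau$ encountered by Algorithm~\ref{alg: ||-MRM} is a finite maximum of convex functions that are $L$-smooth for a single constant $L$ independent of $\tau$, so that $\mathtt{genGrad}$ enjoys a guarantee of the form~\eqref{eq: subroutine-convergence} with $K_{\mathtt{genGrad}}(D,\delta,\Phi_\tau)=2\sqrt{LD^2/\delta}$. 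Second, that substituting the resulting bound on $K^{(l)}_{\mathtt{fom}}$ into the iteration count of Theorem~\ref{thm: main||-MRMtheorem} and summing the geometric series that appears yields $O(1/\sqrt{\varepsilon})$.

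For the first part, I would begin by checking $\varphi_\feasReg$ is a valid convex identifier: each $\gauge{S_j}{e_j}$ is convex, finite, and continuous since $e_j\in\interior S_j$, hence so is its square, and $\gauge{S_j}{e_j}^2(x)<1\iff\gauge{S_j}{e_j}(x)<1\iff x\in\interior S_j$, so that $\{x:\varphi_\feasReg(x)<1\}=\bigcap_j\interior S_j=\interior\feasReg$, the last equality using the Slater point from Assumption~\ref{assmpn: assumption4}. Next, by~\cite[Corollary 3.2]{liu2023gauges} each $\gauge{S_j}{e_j}^2$ is $O(\beta)$-smooth since $S_j$ is $\beta$-smooth and compact, and by~\cite[Proposition 2]{radial2} the component $f^{\Gamma,e_0}$ is $O(\beta)$-smooth (and convex by~\eqref{implication:RLipschitzOfConvexDual}) since $f$ is twice continuously differentiable and $\beta$-smooth and $\dom f$ has compact closure $S_0$, using $D_0<\infty$ from Assumption~\ref{assmpn: assumption1}; the hidden constants here depend only on $\beta$ and geometric radius/diameter quantities. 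Finally, the rescaling identity $(\tau f)^{\Gamma,e_0}(y)=\tfrac1\tau f^{\Gamma,e_0}(e_0+\tau(y-e_0))$ shows, upon differentiating twice, that the smoothness modulus of $(\tau f)^{\Gamma,e_0}$ is $\tau$ times that of $f^{\Gamma,e_0}$; since every $\tau$ visited by Algorithm~\ref{alg: ||-MRM} lies in $[1/p^*,\tau_0]$ with $\tau_0=1/f(x_0)$, letting $L$ be the maximum of these finitely many component moduli gives the claimed uniform bound, and $\Phi_\tau$ is then a maximum of $L$-smooth convex functions with bounded level sets (Lemma~\ref{lem: dualHasBoundedLevelSets}), so $\mathtt{genGrad}$ applies via Theorem~2.3.5 of~\cite{nesterov-textbook}.

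For the second part, with $\delta^{(l)}=b^{-l}$ one gets $K^{(l)}_{\mathtt{fom}}=\max_{i\ge0}K_{\mathtt{genGrad}}(D_0,\delta^{(l)}/\rho,\Phi_{\tau^{(l)}_i})=2\sqrt{LD_0^2\rho}\,b^{l/2}$, where $D_0$ bounds the relevant distances by Lemma~\ref{lem: dualHasBoundedLevelSets} (already invoked inside Theorem~\ref{thm: main||-MRMtheorem}). In the iteration bound of Theorem~\ref{thm: main||-MRMtheorem}, the term $\tfrac{\log(\tau_0 p^*)}{\log(1+\rho/b^2)}K^{(l_0)}_{\mathtt{fom}}$ is a constant not depending on $\varepsilon$, while the term $\tfrac{5b^2(1+\rho/b^2)}{4\rho c_{\tau_0}}\sum_{l=l_0+1}^{\tilde N(\varepsilon)}K^{(l)}_{\mathtt{fom}}$ is a geometric sum with ratio $b^{1/2}>1$, hence bounded by a constant times its last term, which is proportional to $b^{\tilde N(\varepsilon)/2}$; since $b^{\tilde N(\varepsilon)}\le b^2 p^*/(c_{\tau_0}\rho\varepsilon)$ by the definition of $\tilde N(\varepsilon)$, this is $O(1/\sqrt{\varepsilon})$. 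Thus the iteration guarantee of Theorem~\ref{thm: main||-MRMtheorem} becomes $O(1/\sqrt{\varepsilon})$, and its hypothesis $N\ge\log_b(bp^*/(c_{\tau_0}\rho\varepsilon))$ is precisely the one assumed in the corollary.

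The step I expect to be the main obstacle is the smoothness bookkeeping in the first part: one must carefully combine the passage from set smoothness to squared-gauge smoothness of~\cite{liu2023gauges} with the passage from function smoothness to radial-transform smoothness of~\cite{radial2}, tracking the geometric constants each introduces, and confirm that rescaling the objective by $\tau$ only multiplies the modulus by the bounded factor $\tau\le\tau_0$ rather than anything that blows up as $\varepsilon\to0$. A lesser subtlety is that the stated $\mathtt{genGrad}$ rate $2\sqrt{LD^2/\varepsilon}$ must be read for a finite maximum whose components may carry distinct smoothness moduli, which is handled simply by letting $L$ denote their maximum; everything else follows directly from Theorem~\ref{thm: main||-MRMtheorem}.
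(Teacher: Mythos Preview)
Your proposal is correct and follows essentially the same approach as the paper: establish a uniform smoothness constant $L$ for all components of $\Phi_\tau$ (squared gauges via~\cite[Corollary 3.2]{liu2023gauges}, the transformed objective via~\cite{radial2} together with the bound $\tau\le\tau_0$), invoke the $\mathtt{genGrad}$ rate $K^{(l)}_{\mathtt{fom}}=2\sqrt{LD_0^2\rho}\,b^{l/2}$, and then sum the resulting geometric series from Theorem~\ref{thm: main||-MRMtheorem}. The only cosmetic differences are that the paper cites the smoothness of $(\tau f)^{\Gamma,e_0}$ directly with an explicit constant rather than deriving the factor $\tau$ from the rescaling identity, and it rewrites the geometric sum by reindexing from $\tilde N$ downward rather than simply bounding by the last term.
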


\subsection{Practical Consideration} \label{subsec:practical}
To apply $||\mbox{-}\mathtt{MRM}$ with $\varphi_\feasReg$ constructed from gauges (squared) requires three main ingredients, computing the reference points $e_j$ each interior to the related constraint $S_j$, computing a feasible initialization $x_0$ and $\tau_0 = 1/f(x_0)>0$, and computing function values and subgradients of $f^{\Gamma,e_0}$ and $\gamma_{S_j,e_j}$ for the underlying first-order method. Below, we address these three computations and provide an extension to allow affine constraints (which have no interior and so are beyond the scope of Assumption~\ref{assmpn: assumption1}).

\paragraph{Computing Selection of $e_j$} Our multiradial duality theory avoids a reliance on knowing a good reference point interior to $\cap S_j$ (with the quality measured by $R_e(\cap S_j)$). Instead, points $e_j$ with reasonably positive $R_{e_j}(S_j)$ are needed. One natural choice of $e_j$ is the Chebyshev center, defined as maximizing $e \mapsto R_{e}(S_j)$. For generic convex $S_j$, computing this is a convex optimization problem. For polyhedrons, this corresponds to an LP. For  norm-type constraints $\{x \mid \|A_jx - b_j\|\leq 1 \}$, its center is given by any solution to $A_jx=b_j$.

For our numerics, we consider QCQPs where the center is also given by a linear system solve. Our results in Section~\ref{sec:numerics} show that in this setting, the choice of centers has no observable effect on convergence. Therefore, exact solutions to the systems $A_jx = b_j$ are not needed. One could, for instance, compute the centers $e_j$ using only a few conjugate gradient steps.
Note for a given $e_j$, computing or estimating $R_{e_j}(S_j)$ is nontrivial. For convex QCQPs~\eqref{eq:qcqp}, this amounts to a nonconvex QCQP.

\paragraph{Computing an initialization and rescaling $\tau_0$}
Given a selection $e_j$, $||\mbox{-}\mathtt{MRM}$ still requires knowledge of a sufficiently large $\tau_0$ such that $p(\tau_0) \geq 1$. This can be done directly by finding any $x_0\in \cap_{j=0}^m S_j$ and setting $\tau_0 = 1/f(x_0)$. Such a point can be found by minimizing the maximum of the gauges of each $S_j$ with respect to $e_j$ until a value less than one is reached (which the Slater point ensures is possible). Noting that $\lim_{\tau\rightarrow \infty} (\tau f)^{\Gamma,e_0}(y) = \gamma_{S_0,e_0}(y)$, computing an initial feasible point in the domain of $f$ can be viewed as approximately minimizing $\Phi_{\infty}(y)\defeq \lim_{\tau\rightarrow\infty} \Phi_\tau(y)$.
Hence the cost of adding such a first phase to bootstrap $||\mbox{-}\mathtt{MRM}$ is comparable to the cost of approximately minimizing one subproblem $\Phi_\tau$.

\paragraph{Computing $f^{\Gamma,e_0}$ and $\gamma_{S_j,e_j}$ (and their subgradients)} Often $f^\Gamma$ and $\gamma_S$ have closed forms (see~\cite[Tables 1 and 2]{radial2}) and their (sub)gradients can be directly computed from (sup)gradients and normal vectors of $f$ and $S$ (see~\cite[Proposition 19 and 21]{radial1}). For example, generic polyhedral constraints $\{x \mid Ax\leq b\}$ or ellipsoidal constraints $\{x\mid \|Ax-b\|_2\leq 1\}$ have closed forms for their gauge, computable by a single matrix-vector multiplication, see~\eqref{eq:dual-qcqp}.

If a closed form is not available, evaluating the radial transformation of a function or the gauge of a set amounts to a one-dimensional linesearch. Given a function value oracle for $f$ or membership oracle for $S_j$, this can be computed by any root-finding methods (e.g., bisection). Algorithms based on such inexact evaluations were developed by the works~\cite{Zakaria2022,Lu2023}.

\paragraph{Reformulations with Affine Constraints} As stated, our multiradial duality theory does not directly apply to problems with affine constraints $A x = b$ among the set constraints $x\in\cap_{j=1}^m S_j$ since the affine constraints have no interior (and hence cannot satisfy Assumption~\ref{assmpn: assumption1}). Such constraints can be addressed separately from $S_j$ by additionally requiring that each $e_j$ satisfies $Ae_j = b$, then consider the affine constrained primal and multiradial dual functions
$$ \Psi(x) = \begin{cases}
    f(x) & \text{if } Ax=b, \  x\in\feasReg\\
    0 & \text{otherwise}
\end{cases}
\quad
\Phi(y) = \begin{cases}
    \max\{f^{\Gamma,e_0}(x), \gamma_{S_j,e_j}(y)\} & \text{if } Ay=b\\
    \infty & \text{otherwise .}
\end{cases}
$$
Our Theorems~\ref{thm: primialDualEqualityAtOne}, \ref{thm: dualUpperBoundIfPrimalAboveOne}, and~\ref{thm: primalLowerBoundIfDualBelowOne} directly generalize to this setting which restricts to the affine subspace where $Ax=b$, relating maximizers of $\Psi$ and minimizers of $\Phi$. Consequently, given a first-order method capable of minimizing a finite maximum over affine constraints, $||\mbox{-}\mathtt{MRM}$ could be applied to solve an affine-constrained primal. For example, by precomputing the projection operator onto the affine space, a projected subgradient method could be applied, while remaining projection-free with respect to the more sophisticated $S_j$ constraints.

    \section{Numerical Validation}\label{sec:numerics}
In this final section, we apply our theory to synthetically generated QCQP problems. Our primary goal is to validate our theoretical guarantees for $\pMRM$ working in parameter-free fashion ``out-of-the-box'' and highlight a surprising disconnect where performance outscales our theory's predictions. Our implementation is not state-of-the-art, and so we restrict our attention to understanding $\pMRM$ rather than comparisons with other methods. We consider QCQPs of the form
\begin{equation}\label{eq: qcqp_recalled}
    p^* = \begin{cases}
        \max & f_0(x) := r_0 - q_0^Tx - \frac{1}{2} x^TP_0x \\
        \mathrm{s.t.} & f_j(x) := r_j - q_j^Tx - \frac{1}{2} x^TP_jx \geq 0 \quad \forall j=1, \dots, m \ .
    \end{cases}
\end{equation}
where the matrices $P_j$ are symmetric and positive definite.

All our synthetic problems are constructed as follows. The matrices $P_j$ take the form $P_j = G_j^TG_j + \lambda I$ for all $j = 0, 1, \dots, m$, where $\lambda = 0.01,$ $I \in \R^{n \times n}$ is the identity matrix, and each entry of $G_j \in \R^{n \times n}$ is sampled independently from the standard normal distribution. Each $q_j$ is drawn independently from the normal distribution with mean $0$ and covariance $\sigma_j I.$ To avoid the trivial case where the solution is interior to the constraints, we take $\sigma_0 = 10$ and $\sigma_j = 1$ for $j \geq 1.$ Finally, to guarantee a Slater point exists, we ensure $f_j(0) > 0$ by selecting $r_j$ independently and uniformly from $[0.1, 1.1].$ In all cases, $\varspace = \mathbb{R}^{200}$ with the standard Euclidean norm. Code implementing these experiments can be found at \url{https://github.com/samaktbo/Parallel-MultiRadial-Method}.

\subsection{Performance of MRM with Varied Subproblem Solvers}
First, we investigate how the $\pMRM$ method performs under different first-order solvers. Specifically, for each $\mathtt{fom} \in \set{\mathtt{subgrad}, \mathtt{smooth}, \mathtt{genGrad}},$ and $m\in\set{10,100,1000}$, Figure~\ref{fig:convergence_vs_time} shows the relative optimality gap $\frac{p^* - f_0(y^{best}_i)}{p^* - f_0(x_0)}$ varies as a function of real-time and the number of iterations. We initialize each method with $x_0 = 0$ and Algorithm~\ref{alg: ||-MRM} with $b = 4.0$ and $N = 16$ parallel instances. We select the centers in an ideal fashion, i.e., we set $e_j = - P_j^{-1}q_j,$ for each $j = 0, 1, \dots, m.$ We see that for relatively small number of constraints ($m \leq 100$), the generalized gradient method far outperforms the theoretical per-iteration convergence rate of $O(1/\sqrt{\varepsilon}).$  However, this method scales poorly since each iteration requires a QP solve from \texttt{Mosek}, completing about 30 iterations in 3000 seconds for $m = 1000.$ On the other hand, the smoothing method and the subgradient method scale reasonably with $m,$ even though their rate of convergence is slower. These method's convergence matches their theoretically predicted rates of $ O(1/\varepsilon)$ and $O(1/\varepsilon^2),$ respectively, after slower convergence in the first hundred or so iterations, potentially corresponding to the $K^{(l_0)}_{\mathtt{fom}}$ term in Theorem~\ref{thm: main||-MRMtheorem}.

\begin{figure}
    \centering
    \includegraphics[width=0.3\textwidth]{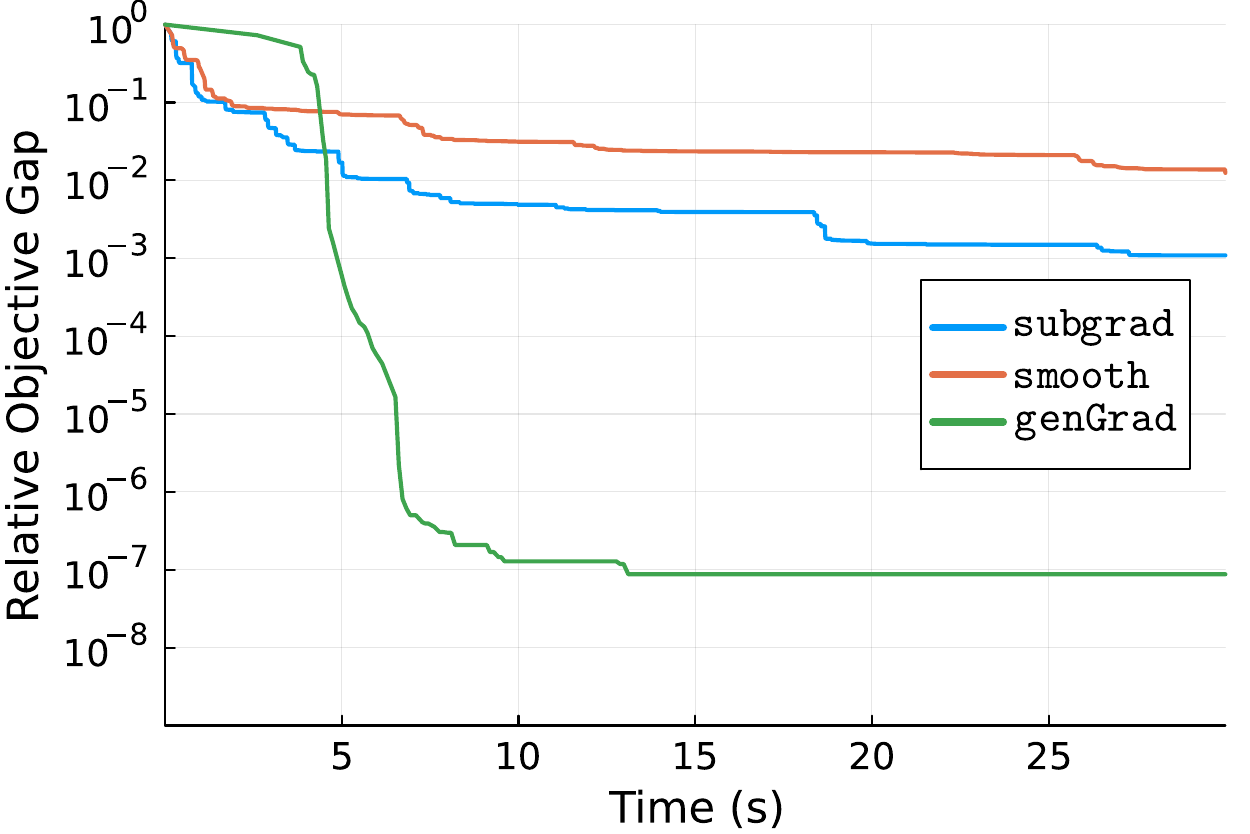}
    \hfill
    \includegraphics[width=0.3\textwidth]{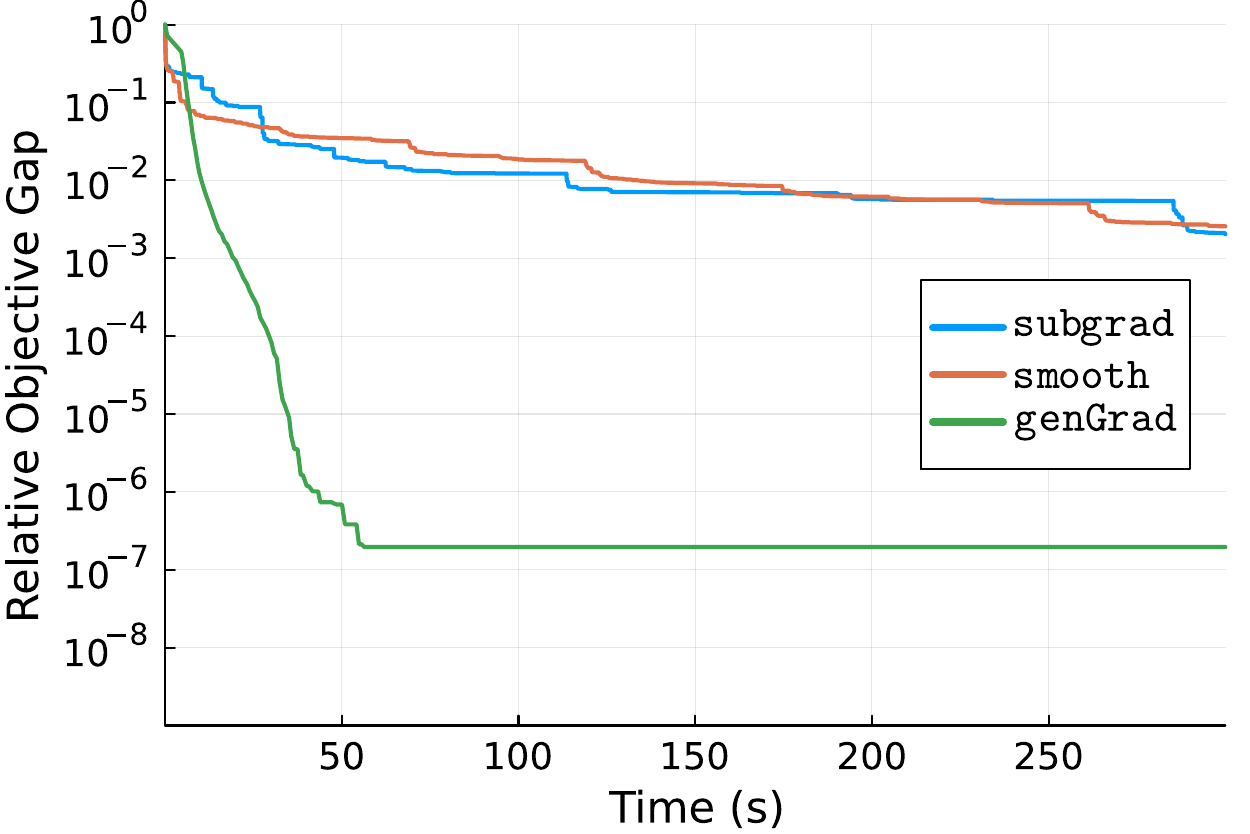}
    \hfill
    \includegraphics[width=0.3\textwidth]{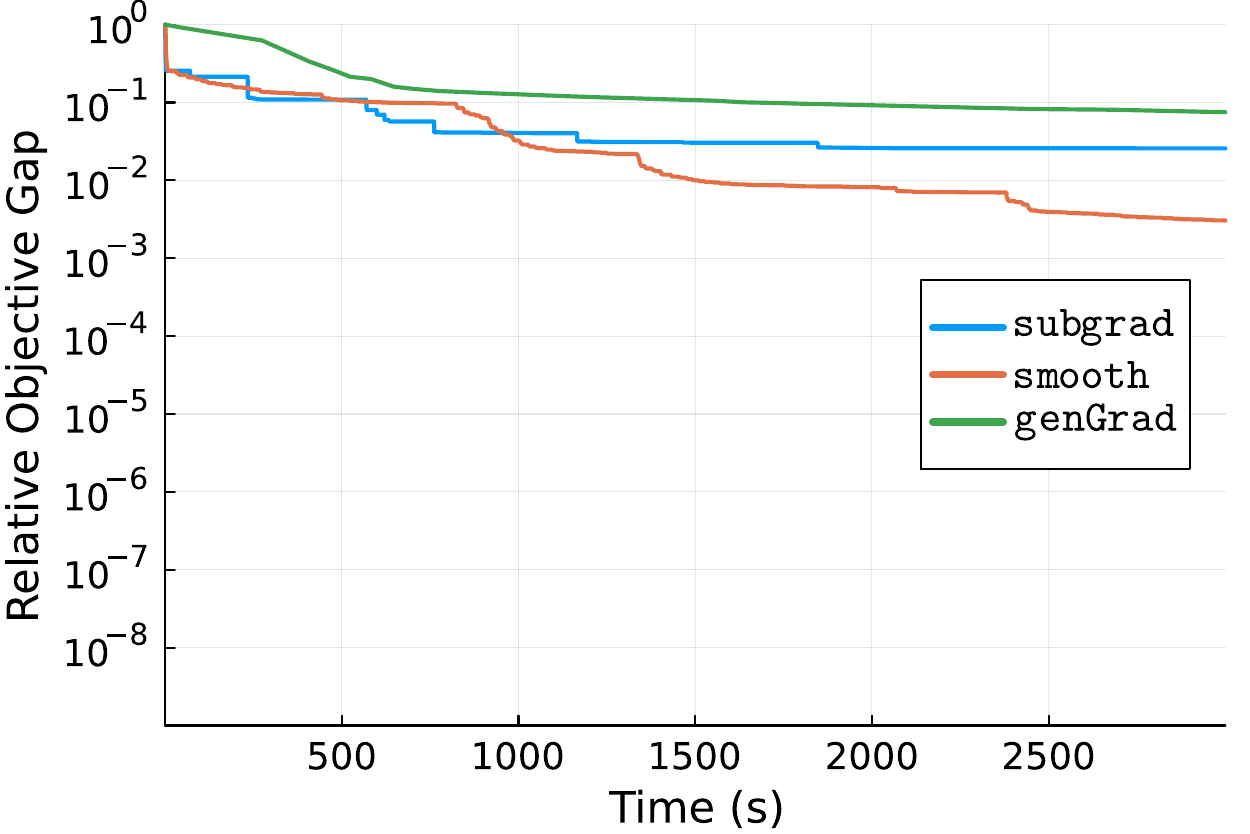}
    \includegraphics[width=0.3\textwidth]{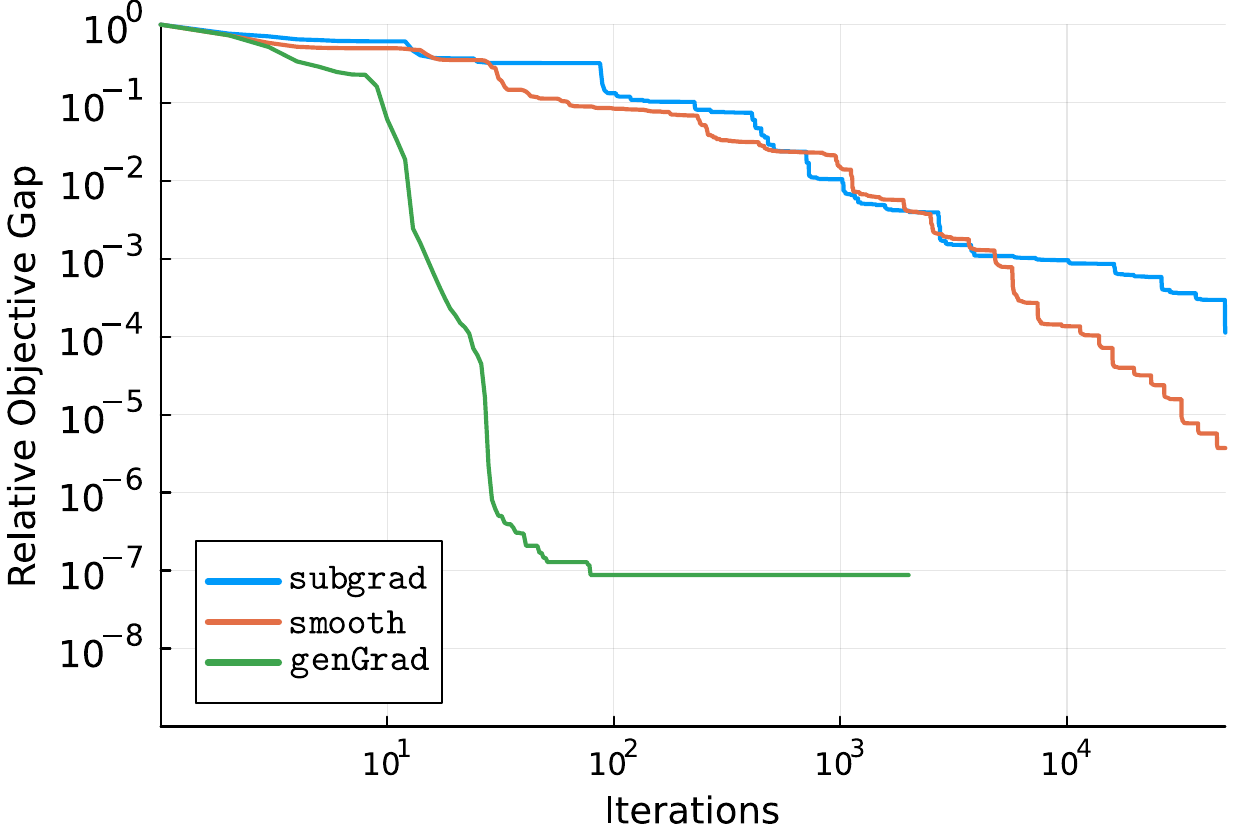}
    \hfill
    \includegraphics[width=0.3\textwidth]{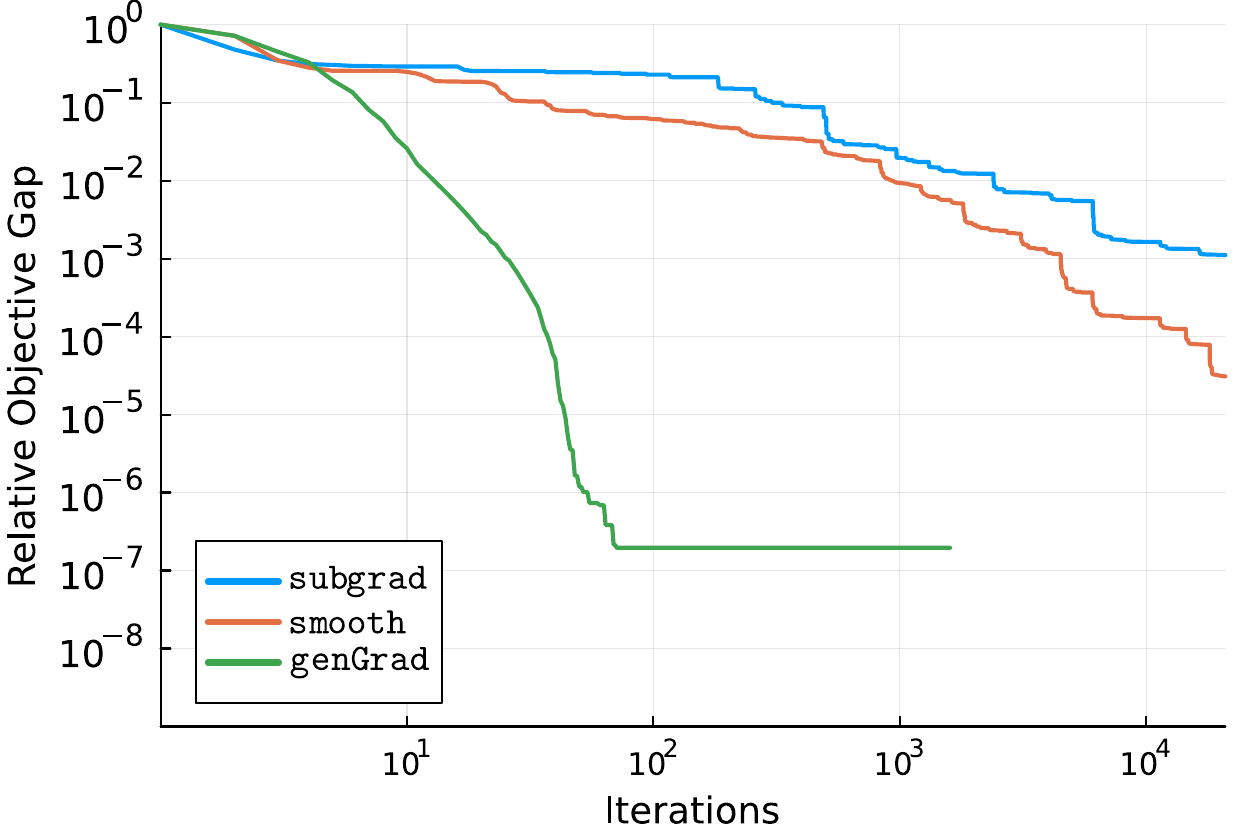}
    \hfill
    \includegraphics[width=0.3\textwidth]{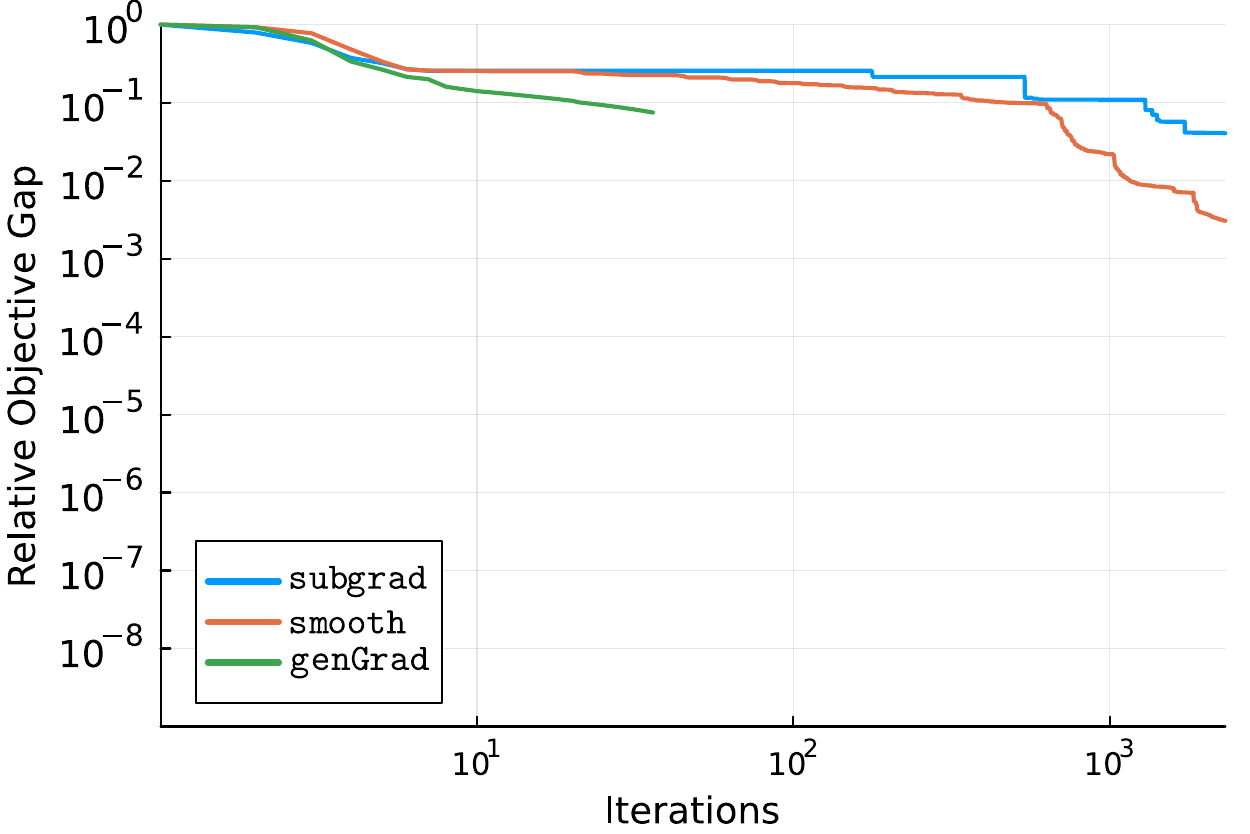}
     
    \caption{Performance of $\pMRM$ utilizing each of $\set{\mathtt{subgrad}, \mathtt{smooth}, \mathtt{genGrad}}$ in relative optimality gap $\frac{p^* - f_0(y^{best}_T)}{p^* - f_0(x_0)}$, plotted against real-time and number of iterations. The number of constraints is $m = 10,$ $m = 100,$ and $m = 1000,$ from left to right, respectively.}
    \label{fig:convergence_vs_time}
\end{figure}

\subsection{Effects of Multiradial Centers On Convergence}
Next, we examine how the performance of Algorithm~\ref{alg: ||-MRM} is affected by the choice of centers $e_0, e_1, \dots, e_m$ for problems of the form \eqref{eq: qcqp_recalled}. We utilize the same set of underlying first-order methods and sample three QCQPs for the same selections of $m$ as before. Then, for $K=300$ target magnitudes of $R$ ranging from $10^{-6}$ to $10^{-1}$, we randomly sample centers $e_j$ with controlled $R_{e_j}(S_j)$ (see full construction below). Surprisingly, Figure~\ref{fig:generic_R_performance} shows the (relative) optimality gap of the iterates of Algorithm~\ref{alg: ||-MRM} reached is essentially independent of the choice of centers $e_j$ and the related constant $R$. Practically, this indicates one need not spend much computational effort to find ``good'' centers to use. Conceptually, this indicates a gap between our theoretical bounds and actual performance. This is true for all three solvers and across problem sizes.

Note for a given $e_j$, computing $R_{e_j}(S_j)$ is a nontrivial nonconvex optimization problem. To avoid this difficulty, rather than randomly sampling $e_j$ and computing the resulting $R$, we generate the $e_j$ in such a way that $R_{e_j}(S_j)$ has a closed form. Namely for any $x_j$ with $f_j(x_j) = 0$ and $0< \alpha \leq 1,$ $e_j = x_j + \frac{\alpha}{\|P_j\|}\nabla f_j(x_j)$ has $f_j(e_j) > 0$ and $R_{e_j}(S_j) = \alpha \frac{\|\nabla f_j(x_j)\|}{\|P_j\|}$ since $f_j$ is $\|P_j\|$-smooth. 
For each of our $K=300$ trials, we use this construction for $e_j,$ setting $\alpha$ uniformly between $0.01$ and $1$ (in log-scale) and $x_j = \Bar{e}_j + \sqrt{2f(\Bar{e}_j)}P_j^{-\frac{1}{2}}u_j$ where $\Bar{e}_j = -P_j^{-1}q_j$ and $u_j$ is sampled uniformly from the unit sphere. The scaling $\sqrt{2f(\Bar{e}_j)}$ ensures that $f(x_j) = 0.$

\begin{figure}
    \centering
    \includegraphics[width=0.3\textwidth]{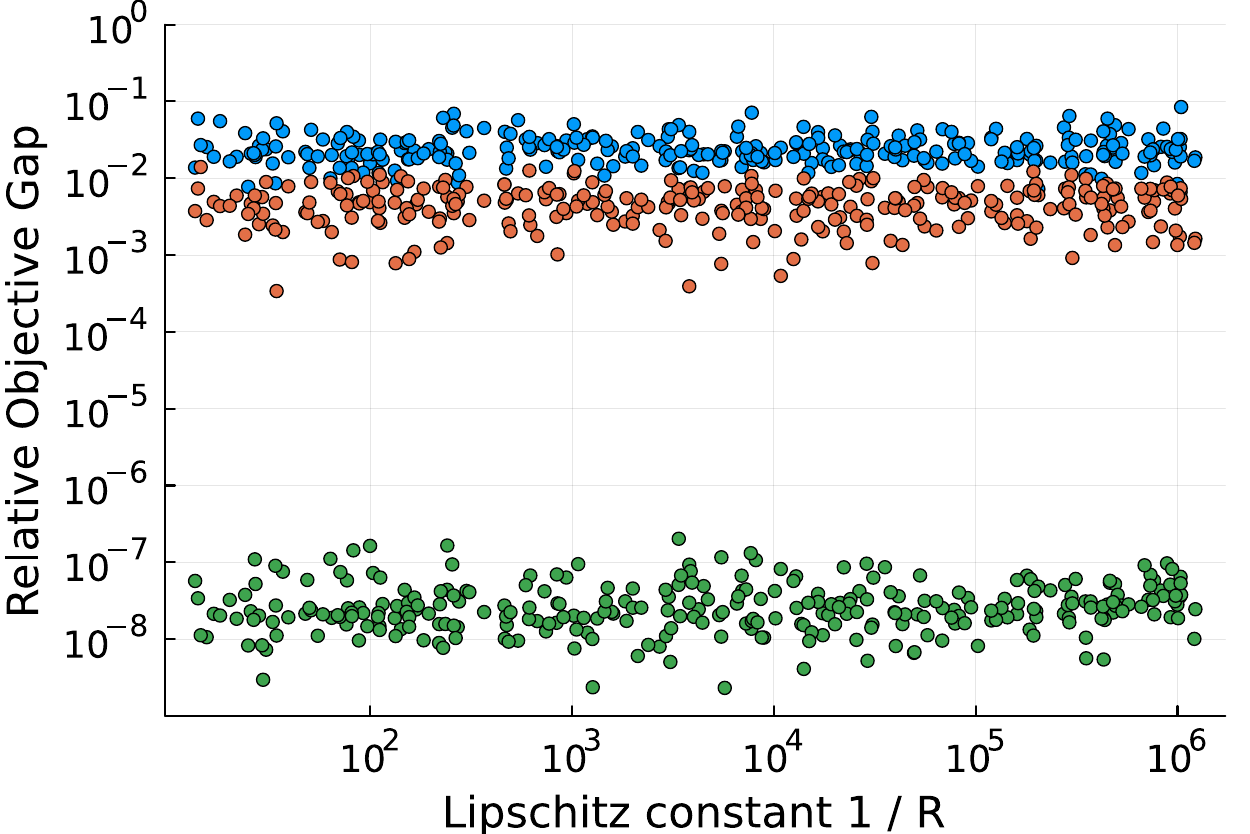}
    \hfill
    \includegraphics[width=0.3\textwidth]{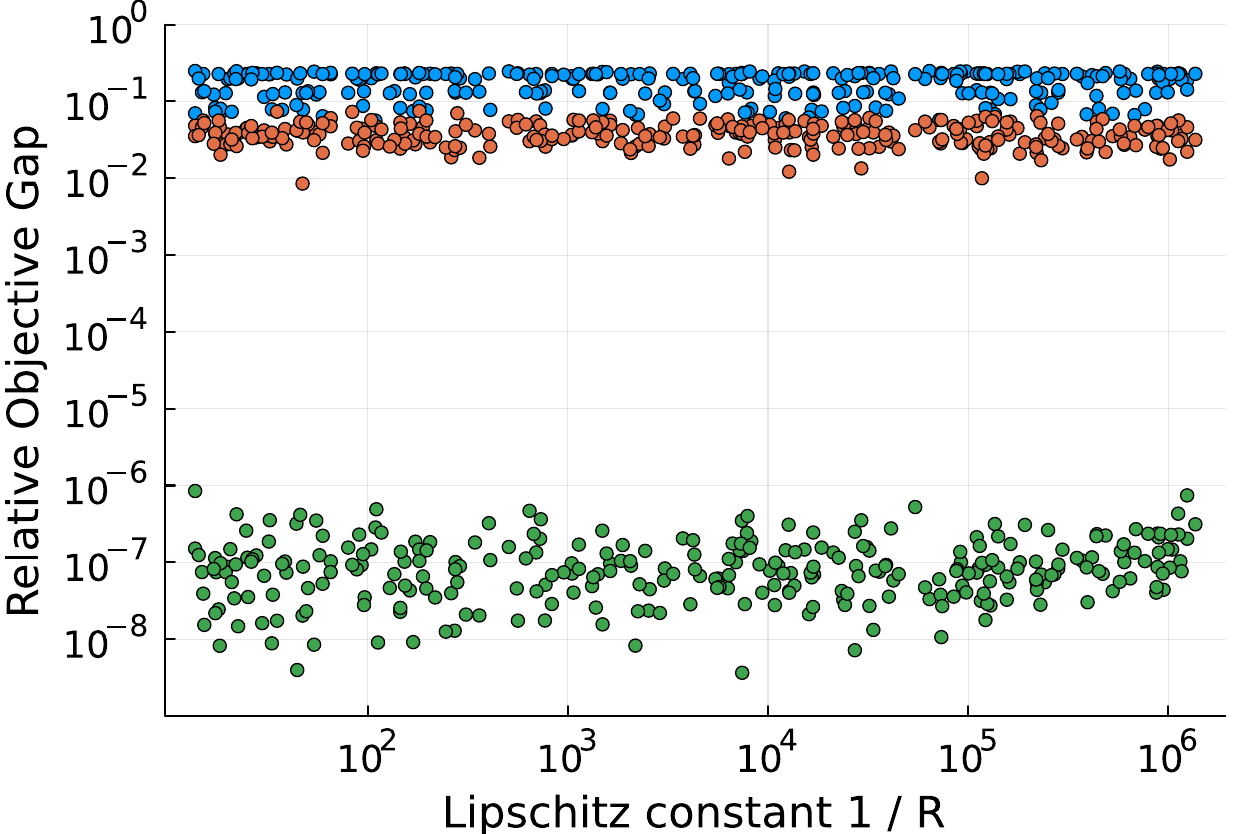}
    \hfill
    \includegraphics[width=0.3\textwidth]{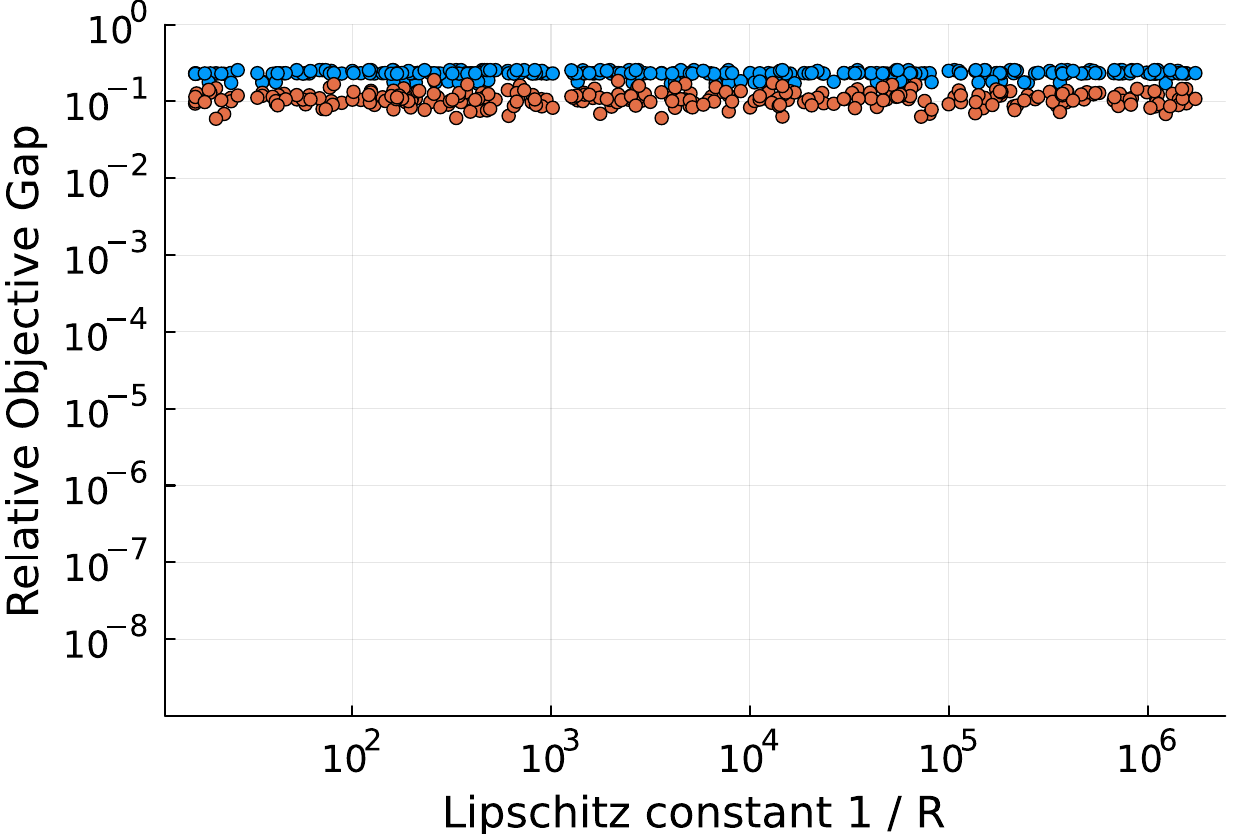}
    \caption{Final relative optimality gap $\frac{p^* - f_0(y^{best}_T)}{p^* - f_0(x_0)}$ vs Lipschitz constant for three solvers: $\mathtt{subgrad}$~(blue), $\mathtt{smooth}$~(orange), and $\mathtt{genGrad}$~(green). The number of constraints is $m = 10,$ $m = 100,$ and $m = 1000,$ from left to right. For $m = 1000,$ $\mathtt{genGrad}$ was prohibitively costly to run.}
    \label{fig:generic_R_performance}
\end{figure}


    \section{Deferred Analysis of Parallel MultiRadial Method} \label{sec:proofs}

We begin by bounding the rate at which the dual gaps $1 - d(\tau^{best}_{i})$ decrease. Recall that $\mathtt{fom}^{(l)}$ of Algorithm~\ref{alg: ||-MRM} restarts at $i\geq 0$ if $\tau^{best}_{i + 1} \leq \frac{1}{1 + \delta^{(l)}}\tau^{(l)}_i$ and $i^{(l)}_0<i^{(l)}_1<i^{(l)}_2\dots $ denotes the sequence of iterations where the $l$th first-order method instance restarted.

\begin{lemma}\label{lem: dualGapDecayForThe||-MRM}
    Under Assumptions~\ref{assmpn: assumption1}~-~\ref{assmpn: assumption4}, if $\frac{b \delta^{(l)}}{\rho} \leq 1 - d(\tau^{best}_i)$ at iteration  $i \geq 0$ of Algorithm~\ref{alg: ||-MRM} with $b \geq 2$, then 
    $$ 
    1 - d(\tau_{i'}^{best}) < \frac{b \delta^{(l)}}{\rho} \quad \text{for all} \quad i' > i + \frac{1}{\log(1 + \delta^{(l)})}\log\left(\frac{p(\tau^{(l)}_i)}{1 + b\delta^{(l)}}\right)K_{\mathtt{fom}}^{(l)}.
    $$ 
\end{lemma}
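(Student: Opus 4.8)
The plan is to show that within the stated number of iterations, some instance $\mathtt{fom}^{(l)}$ produces a feasible iterate good enough to trigger a restart, and then to argue that each such restart multiplicatively decreases the rescaling parameter $\tau^{(l)}$, which in turn (via the contraction machinery of Corollaries~\ref{cor: fomWorksIfDeltaSmall} and~\ref{cor: contractionWithRestartingCondition}) drives $p(\tau^{best})$ --- and hence $1-d(\tau^{best})$ --- below the target threshold $b\delta^{(l)}/\rho$. The key structural fact is that while $1-d(\tau^{best}_{i'}) \geq b\delta^{(l)}/\rho > 2\delta^{(l)}/\rho \geq 2\delta^{(l)}$, the $l$th instance is in a regime where its subproblem accuracy $\delta^{(l)}$ satisfies $\delta^{(l)}/\rho \leq \tfrac{1}{2}(1-d(\tau^{(l)}_{i'}))$ (using monotonicity $d(\tau^{best})\leq d(\tau^{(l)})$ since $\tau^{best}\leq\tau^{(l)}$, so the dual gap seen by instance $l$ is at least as large), so Corollary~\ref{cor: fomWorksIfDeltaSmall} with $r = 1/2$ applies: once $\mathtt{fom}^{(l)}$ reaches a $\delta^{(l)}/\rho$-accurate minimizer of its current $\Phi_{\tau^{(l)}_{i'}}$ --- which by definition of $K^{(l)}_{\mathtt{fom}}$ takes at most $K^{(l)}_{\mathtt{fom}}$ of its own steps --- that iterate is feasible and satisfies $1/f \leq \tfrac{1}{1+\delta^{(l)}}\tau^{(l)}_{i'}$, hence meets instance $l$'s restart criterion (directly, or via the shared $y^{best}$, whichever is sharper). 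So in every window of $K^{(l)}_{\mathtt{fom}}$ iterations, instance $l$ restarts at least once, as long as the dual gap remains above the threshold.

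Next I would count restarts. Each restart of instance $l$ has $\tau^{(l)}_{i^{(l)}_k+1}\leq \tfrac{1}{1+\delta^{(l)}}\tau^{(l)}_{i^{(l)}_k}$, so after $K$ restarts $\tau^{(l)}$ has dropped by a factor $(1+\delta^{(l)})^{-K}$ from $\tau^{(l)}_i$. Meanwhile, by Corollary~\ref{cor: contractionWithRestartingCondition} applied at each restart with $\mu = 1/\rho$ (valid since the restart criterion gives $\tau_{k+1}\leq\tfrac{1}{1+\delta^{(l)}}\tau_k$ and the threshold assumption gives $1-d(\tau_k)\leq \mu\delta^{(l)}$ is \emph{not} what we want --- rather we track $p(\tau)-1$ directly): each restart contracts $p(\tau^{(l)})-1$ by a factor $\tfrac{1}{1+\delta^{(l)}}$ at least (dropping the $(1-c_\tau/\mu)$ factor which is $\leq 1$), so after enough restarts $p(\tau^{(l)}_{i'})-1$ falls below any target, and since $\tau^{best}\leq\tau^{(l)}$ forces $p(\tau^{best})\leq p(\tau^{(l)})$, the gap $p(\tau^{best})-1 = \tau^{best}p^*\cdot\tfrac{p^*-f}{p^*}$-type quantity shrinks, and Theorem~\ref{thm: dualUpperBoundIfPrimalAboveOne} converts this to $1-d(\tau^{best})$ small. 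Concretely, I would set the target so that $p(\tau^{best})-1$ small enough forces $1-d(\tau^{best}) < b\delta^{(l)}/\rho$ via the relation $1-d(\tau) \leq \tfrac{1}{\rho}[p(\tau)-1]$ (the upper bound direction, which is the easy inequality of Corollary~\ref{cor: fomWorksIfDeltaSmall}'s proof, i.e. $1-\Phi_\tau(y)\geq \rho$-type converted back), giving that it suffices to have $p(\tau^{(l)}_{i'})-1 < b\delta^{(l)}$, i.e. $(1+b\delta^{(l)}) > p(\tau^{(l)}_{i'})$, which after $k$ restarts holds once $(1+\delta^{(l)})^{-k}(p(\tau^{(l)}_i)-1) < b\delta^{(l)}$ --- wait, more cleanly: once $p(\tau^{(l)}_i)/(1+\delta^{(l)})^{k} < 1 + b\delta^{(l)}$, i.e. $k > \log\!\big(p(\tau^{(l)}_i)/(1+b\delta^{(l)})\big)/\log(1+\delta^{(l)})$. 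Multiplying this restart count by the at-most-$K^{(l)}_{\mathtt{fom}}$ iterations per restart gives exactly the claimed bound $i + \tfrac{1}{\log(1+\delta^{(l)})}\log\!\big(p(\tau^{(l)}_i)/(1+b\delta^{(l)})\big)K^{(l)}_{\mathtt{fom}}$ on $i'$.

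The main obstacle I anticipate is the bookkeeping around the \emph{shared} best iterate: instance $l$ restarts not necessarily on its own iterate but on $y^{best}$, which may come from another instance and which may be even better; I need to check that (a) the restart criterion $\tau^{best}\leq\tfrac{1}{1+\delta^{(l)}}\tau^{(l)}$ is still implied whenever instance $l$'s \emph{own} progress would have triggered it (monotonicity of $\tau^{best}$ over iterations handles this: $\tau^{best}$ only decreases, and $\tau^{best}\leq 1/f(y^{(l)}_{i_k})$ when $y^{(l)}_{i_k}$ is feasible), and (b) the contraction argument on $p(\tau^{(l)})$ only needs $\tau^{(l)}_{i^{(l)}_k+1}\leq\tfrac{1}{1+\delta^{(l)}}\tau^{(l)}_{i^{(l)}_k}$, which is the restart condition itself, so it goes through verbatim. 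A secondary subtlety is ensuring $1-d(\tau^{best}_i)\geq b\delta^{(l)}/\rho$ for \emph{all} intermediate $i'$ in the window so that Corollary~\ref{cor: fomWorksIfDeltaSmall} stays applicable --- but this is exactly the contrapositive situation: if the gap ever drops below the threshold before the bound, the lemma's conclusion already holds (the gap is monotone non-increasing in $\tau^{best}$, and $\tau^{best}$ is non-increasing), so I may assume the gap stays above the threshold throughout and derive a contradiction with the restart count, or equivalently phrase it as: either the conclusion holds early, or enough restarts occur to force it by the stated iteration. I would present it in the latter form to keep the monotonicity argument clean.
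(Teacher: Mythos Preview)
Your plan is correct and matches the paper's proof: bound the time between consecutive restarts of instance $l$ by $K^{(l)}_{\mathtt{fom}}$ via Corollary~\ref{cor: fomWorksIfDeltaSmall}, count the restarts needed via the geometric decay $p(\tau^{(l)}_{i^{(l)}_{k+1}})\leq p(\tau^{(l)}_{i^{(l)}_k})/(1+\delta^{(l)})$, and then convert $p(\tau^{best})<1+b\delta^{(l)}$ into $1-d(\tau^{best})<b\delta^{(l)}/\rho$ via Theorem~\ref{thm: primalLowerBoundIfDualBelowOne} (not Corollary~\ref{cor: fomWorksIfDeltaSmall} as you cite). Two small notes: your written monotonicity inequality ``$d(\tau^{best})\leq d(\tau^{(l)})$'' has the sign reversed (since $d(\cdot)$ is non-increasing in $\tau$, $\tau^{best}\leq\tau^{(l)}$ gives $d(\tau^{best})\geq d(\tau^{(l)})$), though your verbal conclusion that instance $l$'s dual gap is at least as large is the correct one; and the paper applies Corollary~\ref{cor: fomWorksIfDeltaSmall} with $r=1-1/b$ rather than your $r=1/2$, but both choices yield $\tau f(y)-1\geq\delta^{(l)}$ since $b\geq 2$.
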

\begin{proof}
    It suffices to bound the number of iterations until $p(\tau^{best}_{i'}) < 1 + b\delta^{(l)}$ holds since by Theorem~\ref{thm: primalLowerBoundIfDualBelowOne}, this implies $1 - d(\tau^{best}_{i'}) < \frac{b\delta^{(l)}}{\rho}$. Let $i^{(l)}_{k}$ denote the first iteration after $i$ where $\mathtt{fom}^{(l)}$ restarts and $i^{(l)}_{\hat k}$ denote the first iteration with $p(\tau^{(l)}_{i^{(l)}_{\hat k}}) < 1 + b\delta^{(l)}$. The restarting condition of $\mathtt{fom}^{(l)}$ ensures $p(\tau^{(l)}_{i^{(l)}_{j+1}}) \leq (1+\delta^{(l)})^{-1}p(\tau^{(l)}_{i^{(l)}_{j}})$. Therefore
    $$ p(\tau^{best}_{i^{(l)}_{\hat k}}) \leq p(\tau^{(l)}_{i^{(l)}_{\hat k}}) \leq (1+\delta^{(l)})^{-(\hat k - k)}p(\tau^{(l)}_{i}) \ . $$
    Hence after $\frac{1}{\log(1 + \delta^{(l)})}\log\left(\frac{p(\tau^{(l)}_i)}{1 + b\delta^{(l)}}\right)$ restarts of $\mathtt{fom}^{(l)}$, every iteration $i'$ must have $p(\tau^{best}_{i'}) < 1 + b\delta^{(l)}$ and hence $1 - d(\tau^{best}_{i'}) < \frac{b\delta^{(l)}}{\rho}$.
    
    All that remains to bound the number of iterations between consecutive restarts of $\mathtt{fom}^{(l)}$ by $K_{\mathtt{fom}}^{(l)}$.
    Consider some pair of restart times $i^{(l)}_k < i^{(l)}_{k+1}$ with $i^{(l)}_k \geq i.$ If some first-order method instance $l'\neq l$ at an iteration $i' \leq i^{(l)}_k + K_{\mathtt{fom}}^{(l)}$, finds an iterate improving $\tau^{best}$ to be less than $\tau^{(l)}_{i^{(l)}_k}/(1+\delta^{(l)})$, then $\mathtt{fom}^{(l)}$ will restart with $i^{(l)}_{k+1} \leq i^{(l)}_k + K_{\mathtt{fom}}^{(l)}$. Otherwise, $\mathtt{fom}^{(l)}$ proceeds without interruption from other processes for at least $K_{\mathtt{fom}}^{(l)}$ iterations. Then, by definition, some $i' \leq i^{(l)}_k + K_{\mathtt{fom}}^{(l)}$ has $y^{(l)}_{i'}$ be a $\delta^{(l)}/\rho$-minimizer of $\Phi_{\tau^{(l)}_{i}}$. Since $\frac{\delta^{(l)}}{\rho} \leq (1 - d(\tau^{best}_{i}))/b \leq  (1 - d(\tau^{(l)}_{i^{(l)}_k}))/b$, Corollary~\ref{cor: fomWorksIfDeltaSmall} implies $y^{(l)}_{i'}$ is feasible and $\tau^{(l)}_{i^{(l)}_k} f(y^{(l)}_{i'}) - 1 \geq (1 - 1/b)\rho (1- d(\tau^{(l)}_{i^{(l)}_k})) \geq \delta^{(l)}$. Hence $1/f(y^{(l)}_{i'}) \leq \frac{1}{1+\delta^{(l)}}\tau^{(l)}_{i^{(l)}_k}$ and so $i^{(l)}_{k+1} \leq i^{(l)}_k + K_{\mathtt{fom}}^{(l)}$.

\end{proof}

\subsection{Proof of Theorem~\ref{thm: main||-MRMtheorem}}
    From Lemma~\ref{lem: dualGapDecayForThe||-MRM}, we arrive at the following.
    \begin{theorem}\label{thm: fundamentalThmOf||-MRM}
    Suppose Assumptions~\ref{assmpn: assumption1}~-~\ref{assmpn: assumption4} hold and let $\mathtt{fom}$ be given. Then, for all $\Bar{\varepsilon} > 0$, Algorithm~\ref{alg: ||-MRM} with $\mathtt{fom},$ $b \geq 2,$ and $N = \lceil\log_b(1/\Bar{\varepsilon})\rceil$ has 
    $$
    1 - d(\tau^{best}_i) \leq \frac{b\Bar{\varepsilon}}{\rho} \quad \text{for all} \quad i \geq \frac{\log(\tau_0 p^*)}{\log(1 + \frac{\rho}{b^2})} K^{(l_0)}_{\mathtt{fom}} + \frac{5b^2(1 + \frac{\rho}{b^2})}{4\rho c_{\tau_0}}\sum_{l = l_0 + 1}^{N} K^{(l)}_{\mathtt{fom}}
    $$
    In fact, any such $i$ has $1 - d(\tau^{best}_i)\leq \frac{b\delta^{(N)}}{\rho}.$
    \end{theorem}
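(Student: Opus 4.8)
The plan is to track how the dual gaps $1 - d(\tau^{best}_i)$ shrink geometrically as successive critical first-order-method instances $l_0, l_0+1, \dots, N$ each "activate" in turn. The key organizing observation is that for each instance $l$, once $\frac{b\delta^{(l)}}{\rho} \leq 1 - d(\tau^{best}_i)$ holds (i.e., the dual gap is still large enough that $\mathtt{fom}^{(l)}$ can make genuine progress), Lemma~\ref{lem: dualGapDecayForThe||-MRM} guarantees that after a bounded number of iterations — at most $\frac{1}{\log(1+\delta^{(l)})}\log\!\left(\frac{p(\tau^{(l)}_i)}{1+b\delta^{(l)}}\right)K^{(l)}_{\mathtt{fom}}$ further iterations — we reach $1 - d(\tau^{best}_{i'}) < \frac{b\delta^{(l)}}{\rho}$. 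So I would apply Lemma~\ref{lem: dualGapDecayForThe||-MRM} sequentially for $l = l_0, l_0+1, \dots, N$: once $1 - d(\tau^{best})$ drops below $\frac{b\delta^{(l)}}{\rho}$, the next relevant threshold is $\frac{b\delta^{(l+1)}}{\rho} = \frac{1}{b}\cdot\frac{b\delta^{(l)}}{\rho}$, so there is a window where the "$l+1$" hypothesis of the lemma is satisfied, and we repeat. Telescoping these windows gives the total iteration count, and since $\delta^{(N)} = b^{-N} \leq \bar\varepsilon$ (using $N = \lceil\log_b(1/\bar\varepsilon)\rceil$), reaching the $\frac{b\delta^{(N)}}{\rho}$ threshold yields the claimed bound $1 - d(\tau^{best}_i) \leq \frac{b\delta^{(N)}}{\rho} \leq \frac{b\bar\varepsilon}{\rho}$.

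The main bookkeeping is to sum the per-stage iteration bounds into the stated closed form. For the initial stage $l = l_0$ one must bound $\log(p(\tau^{(l_0)}_i)/(1+b\delta^{(l_0)})) \leq \log p(\tau_0) = \log(\tau_0 p^*)$ (using that $p(\tau)$ is decreasing in the run and $\tau$ only decreases from $\tau_0$), and $\log(1+\delta^{(l_0)}) \geq \log(1+\frac{\rho}{b^2})$ by the definition $l_0 = \lfloor\log_b(b^2/\rho)\rfloor$ which forces $\delta^{(l_0)} \geq \frac{\rho}{b^2}$; this produces the first term $\frac{\log(\tau_0 p^*)}{\log(1+\rho/b^2)}K^{(l_0)}_{\mathtt{fom}}$. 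For the subsequent stages $l = l_0+1, \dots, N$, once the gap has been driven below $\frac{b\delta^{(l-1)}}{\rho}$, one knows $p(\tau^{(l)}) < 1 + b\delta^{(l-1)} = 1 + b^2\delta^{(l)}$ entering stage $l$, so $\log\!\left(\frac{p(\tau^{(l)})}{1+b\delta^{(l)}}\right) \leq \log\!\left(\frac{1+b^2\delta^{(l)}}{1+b\delta^{(l)}}\right) \leq (b^2-b)\delta^{(l)} \leq b^2\delta^{(l)}$, while $\log(1+\delta^{(l)}) \geq \delta^{(l)}/(1+\delta^{(l)}) \geq \delta^{(l)}/(1+\frac{\rho}{b^2})$ since $\delta^{(l)} \leq \delta^{(l_0+1)} \leq \frac{\rho}{b^2}$ for $l \geq l_0+1$; the ratio is then at most $b^2(1+\frac{\rho}{b^2})$, independent of $l$. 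Summing over $l$ and absorbing the leftover numerical constant (the $\frac54$ factor) into the estimate gives the second term $\frac{5b^2(1+\rho/b^2)}{4\rho c_{\tau_0}}\sum_{l=l_0+1}^{N}K^{(l)}_{\mathtt{fom}}$ — here the $\frac{1}{c_{\tau_0}}$ presumably enters through bounding $\delta^{(l)}$ below in terms of $c_{\tau_0}$ or through the relation $c_\tau \geq c_{\tau_0}$ used to convert between the critical-instance indices; I would reconcile the exact constant by being slightly loose, as the statement only claims "$\geq$".

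The step I expect to be the main obstacle is making the \emph{sequential} application of Lemma~\ref{lem: dualGapDecayForThe||-MRM} rigorous — specifically, verifying that at the start of stage $l$, instance $\mathtt{fom}^{(l)}$ genuinely satisfies the lemma's hypothesis $\frac{b\delta^{(l)}}{\rho} \leq 1 - d(\tau^{best}_i)$ at \emph{some} iteration $i$ after stage $l-1$ has concluded, and that the $K^{(l)}_{\mathtt{fom}}$ bound on inter-restart gaps (the second half of the lemma's proof, which handles interference from other instances) composes cleanly across stages without double-counting iterations. One subtlety is that multiple instances run simultaneously, so the windows may overlap rather than concatenate; I would handle this by noting the iteration-count bound is a \emph{worst case} and the overlap can only help, so it suffices to bound each stage's contribution additively. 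A second subtlety is the boundary behavior when $1 - d(\tau^{best})$ lands exactly at a threshold $\frac{b\delta^{(l)}}{\rho}$ versus strictly below — this is cosmetic and handled by the definition of the critical instance $l_i$ via the half-open interval $[\frac{b\delta^{(l_i)}}{\rho}, \frac{b^2\delta^{(l_i)}}{\rho})$, which is exactly engineered so consecutive stages tile without gaps. Once the staging is set up correctly, the arithmetic is routine.
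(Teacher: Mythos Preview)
Your overall staging argument is exactly the paper's: partition $(0,1]$ by the thresholds $\frac{b\delta^{(l)}}{\rho}$ for $l=l_0,\dots,N$, apply Lemma~\ref{lem: dualGapDecayForThe||-MRM} once per sub-interval, and sum. The first-stage bound for $l_0$ is also handled as in the paper.

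The genuine gap is in your per-stage bound for $l>l_0$. You assert that once $1-d(\tau^{best}) < \frac{b\delta^{(l-1)}}{\rho}$, one has $p(\tau^{(l)}) < 1 + b\delta^{(l-1)} = 1 + b^2\delta^{(l)}$. This implication is in the wrong direction. Theorem~\ref{thm: primalLowerBoundIfDualBelowOne} says $p-1 \geq \rho\,\frac{1-d}{d}$, which lets you deduce a dual-gap bound from a primal-gap bound, not the reverse. To bound $p$ above from a bound on $1-d$ you must use Theorem~\ref{thm: dualUpperBoundIfPrimalAboveOne}, which only gives $p(\tau)-1 \leq \frac{1-d(\tau)}{c_{\tau}}$. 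Consequently the correct entry bound is $p(\tau^{best}_i)-1 \leq \frac{b^2\delta^{(l)}}{\rho c_{\tau^{best}_i}} \leq \frac{b^2\delta^{(l)}}{\rho c_{\tau_0}}$, and after also accounting for $\tau^{(l)}_i \leq (1+\delta^{(l)})\tau^{best}_i$ (which you omit) one gets $p(\tau^{(l)}_i) \leq 1 + \bigl[\frac{b^2}{\rho c_{\tau_0}} + p(\tau_0)\bigr]\delta^{(l)}$. This is precisely where the factor $\frac{1}{\rho c_{\tau_0}}$ in the theorem originates; it is not a slack from ``bounding $\delta^{(l)}$ below'' as you speculate. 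The $\tfrac{5}{4}$ then comes from the elementary estimate $\frac{\delta}{\ln(1+\delta)}\leq \tfrac{5}{4}$ for $\delta\in(0,1/2]$ together with $c_{\tau_0}[p(\tau_0)-1]\leq 1$ (again Theorem~\ref{thm: dualUpperBoundIfPrimalAboveOne}). With your incorrect $p$-bound the ratio you compute is $b^2(1+\rho/b^2)$, which is too strong by exactly the missing $\frac{1}{\rho c_{\tau_0}}$ and would not reconcile with the stated constant.
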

    \begin{proof}
    Note that $\frac{b\delta^{(N)}}{\rho} = \frac{b}{b^{N}\rho} \leq \frac{b\Bar{\varepsilon}}{\rho}$ so that the second statement of the theorem implies the first. Now, if $1 - d(\tau_0) \leq \frac{b\delta^{(N)}}{\rho}$ then there is nothing to prove. We therefore assume for the rest of the proof that $\frac{b}{b^{N}\rho} < 1 - d(\tau_0) \leq 1.$ 

    Considering the partition of $[0,1]$ given by $\set{0, \frac{b}{b^{N}\rho}, \frac{b}{b^{N-1}\rho}, \dots, \frac{b}{b^{l_0}\rho}, 1}$, Lemma~\ref{lem: dualGapDecayForThe||-MRM} gives a bound on how long $1 - d(\tau^{best}_i)$ can remain in each sub-interval of $(\frac{b}{b^{N}\rho}, 1]$. We get the number of iterations needed to have $1 - d(\tau^{best}_i) \leq \frac{b}{b^{N}\rho}$ by summing the total number of iterations needed to move $1 - d(\tau^{best}_i)$ out of each sub-interval of $(\frac{b}{b^{N}\rho}, 1]$.
    
    Note that $\frac{\log(p(\tau_0))}{\log(1 + \frac{\rho}{b^2})}K_{\mathtt{fom}}^{(l_0)} \geq \frac{1}{\log(1 + \delta^{(l_0)})}\log\left(\frac{p(\tau_0)}{1 + b\delta^{(l_0)}}\right)K_{\mathtt{fom}}^{(l_0)}$ since $\rho \leq b/b^{l_0 - 1}$ by the definition of $l_0$. Therefore, if $\frac{b\delta^{(l_0)}}{\rho} < 1 - d(\tau_0),$ then $1 - d(\tau^{best}_i) < \frac{b\delta^{(l_0)}}{\rho}$ for all $i > \frac{\log(p(\tau_0))}{\log(1 + \frac{\rho}{b^2})}K_{\mathtt{fom}}^{(l_0)},$ by Lemma~\ref{lem: dualGapDecayForThe||-MRM}. 
    
    Note that the restarting condition implies $\tau^{(l)}_i \leq (1 + \delta^{(l)})\tau^{best}_i.$ Thus, any $i$ with $1 - d(\tau^{best}_i) \leq \frac{b^2\delta^{(l)}}{\rho}$ has
    \begin{align*}
        p(\tau^{(l)}_i) \leq (1 + \delta^{(l)}_i)p(\tau^{best}_i) \leq 1 + \frac{1 - d(\tau^{best}_i)}{c_{\tau^{best}_i}} + p(\tau^{best}_i)\delta^{(l)} 
        &\leq 1 + \left[\frac{b^2}{\rho c_{\tau^{best}_i}} + p(\tau^{best}_i)\right]\delta^{(l)} \\
        &\leq 1 + \left[\frac{b^2}{\rho c_{\tau_0}} + p(\tau_0)\right]\delta^{(l)}, 
    \end{align*}    
    where the second inequality is by Theorem~\ref{thm: dualUpperBoundIfPrimalAboveOne}, the third is by assumption, and the fourth holds because $\tau^{best}_i \leq \tau_0.$ Therefore,
    \begin{align*}
        \frac{1}{\log(1 + \delta^{(l)})}\log\left(\frac{p(\tau^{(l)}_i)}{1 + b\delta^{(l)}}\right) 
        &= \frac{1}{\ln(1 + \delta^{(l)})}\ln\left(1 + \frac{p(\tau^{(l)}_i) - 1 - b\delta^{(l)}}{1 + b\delta^{(l)}}\right) \\
        &\leq \frac{1}{\ln(1 + \delta^{(l)})}\left(\frac{p(\tau^{(l)}_i) - 1 - b\delta^{(l)}}{1 + b\delta^{(l)}}\right) \\
        &\leq \frac{1}{\ln(1 + \delta^{(l)})}\left[\frac{b^2}{\rho c_{\tau_0}} + p(\tau_0) - b\right]\frac{\delta^{(l)}}{1 + b\delta^{(l)}} \\
        &\leq \frac{\delta^{(l)}}{\ln(1 + \delta^{(l)})}\frac{b^2}{\rho c_{\tau_0}}\left[1 + \rho\frac{c_{\tau_0}[p(\tau_0) - 1]}{b^2}\right] \\
        &\leq \frac{5b^2}{4\rho c_{\tau_0}}\left(1 + \frac{\rho}{b^2}\right)
    \end{align*}
    where the first inequality follows from $\ln(1 + x) \leq x$ and the last follows because $\frac{\delta}{\ln(1 + \delta)} \leq \frac{5}{4}$ for any $\delta \in (0, 1/2]$ and $c_{\tau_0}[p(\tau_0) - 1] \leq 1$ by Theorem~\ref{thm: dualUpperBoundIfPrimalAboveOne}. As such, if $\frac{b\delta^{(l)}}{\rho} < 1 - d(\tau^{best}_i) \leq \frac{b^2\delta^{(l)}}{\rho}$ for $l > l_0,$ then $1 - d(\tau^{best}_{i'}) \leq \frac{b\delta^{(l)}}{\rho}$ for all $i' > i + \frac{5b^2(1 + \frac{\rho}{b^2})}{4\rho c_{\tau_0}}K_{\mathtt{fom}}^{(l)}$ by Lemma~\ref{lem: dualGapDecayForThe||-MRM}. Summing everything completes the proof.
    \end{proof}

    From this theorem, our originally claimed Theorem~\ref{thm: main||-MRMtheorem} follows directly: Given $\varepsilon$ and $i$ as in Theorem~\ref{thm: main||-MRMtheorem}, consider $\Bar{\varepsilon} = \frac{c_{\tau_0}\rho \varepsilon}{bp^*}$. Then the above result ensures
    \begin{align*}
        \frac{b\Bar{\varepsilon}}{\rho} &\geq 1 - d(\tau^{best}_i) & [\text{Theorem~\ref{thm: fundamentalThmOf||-MRM}}] \\
        &\geq c_{\tau^{best}_i}(\tau^{best}_i p^* - 1) & [\text{Theorem~\ref{thm: dualUpperBoundIfPrimalAboveOne}}]\\
        &\geq c_{\tau_0}\frac{p^* - f(y^{best}_i)}{f(y^{best}_i)} & [c_{\tau_0} \leq c_{\tau^{best}_i}]\\
        &\geq c_{\tau_0}\frac{p^* - f(y^{best}_i)}{p^*}.
    \end{align*}
    Since $y^{best}_i$ is always feasible, the proof is complete.

    \paragraph{Acknowledgements.} This work was supported in part by the Air Force Office of Scientific Research under award number FA9550-23-1-0531.
    
    {\small
    \bibliographystyle{unsrt}
    \bibliography{bibliography}
    }
    \appendix
    \section{Proofs of Corollaries}\label{app:proofs}
Throughout this appendix, we let
\begin{align*}
    D \defeq \max\{D(S_j) \mid j = 0, 1, \dots, m\} \quad \text{and} \quad R \defeq \min\{R_{e_j}(S_j) \mid j = 0, 1, \dots, m\},
\end{align*}
where $e_1, \dots, e_m$ are the reference points defining $\gauge{S_1}{e_1}, \dots, \gauge{S_m}{e_m}.$

\subsection{Proof of Corollary~\ref{cor: subgradRateWith||-MRM}}
    Let $\Bar{\varepsilon} = \frac{c_{\tau_0}\rho \varepsilon}{bp^*}$ and $\tilde{N} = \lceil\log_b(\frac{bp^*}{c_{\tau_0}\rho \varepsilon})\rceil.$ By Theorem~\ref{thm: main||-MRMtheorem}, it suffices to show that $K^{(l_0)}_{\mathtt{subgrad}}$ and $\sum_{l = l_0 + 1}^{\Tilde{N}} K^{(l)}_{\mathtt{subgrad}}$ are bounded by $O(1 / \varepsilon^2).$ Since $\Phi_{\tau}$ is $1/ R$-Lipschitz for all $\tau > 0,$ it follows that $K^{(l)}_{\mathtt{subgrad}} \leq \frac{(D_0/R)^2}{(\delta^{(l)}/\rho)^2} = (\rho b^l D_0/R)^2.$ Now, we have $b^{l_0} \leq b^2/\rho$ by definition, hence $K^{(l_0)}_{\mathtt{subgrad}} \leq b^4\frac{D_0^2}{R^2}$ is constant with respect to $\varepsilon$. For $l > l_0,$ we have $b^l = \frac{b\cdot b^{\Tilde{N} - 1}}{b^{\Tilde{N} - l}} \leq \frac{b}{\Bar{\varepsilon}b^{\Tilde{N} - l}} = \frac{b}{\rho}\left(\frac{bp^*}{c_{\tau_0}}\right)\frac{1}{b^{\Tilde{N} - l}}\frac{1}{\varepsilon}.$ Therefore
    \begin{align*}
        \sum_{l = l_0 + 1}^{\Tilde{N}} K^{(l)}_{\mathtt{subgrad}} &\leq \left[b^2\left(\frac{bp^*}{c_{\tau_0}}\right)^2 \sum_{l = l_0 + 1}^{\Tilde{N}} \frac{1}{b^{2(\Tilde{N} - l)}}\right]\frac{D_0^2}{R^2}\frac{1}{\varepsilon^2} = O\left(\frac{1}{\varepsilon^2}\right). 
    \end{align*}

\subsection{Proof of Corollary~\ref{cor: smoothRateWith||-MRM}}
    Let $\Bar{\varepsilon} = \frac{c_{\tau_0}\rho \varepsilon}{bp^*}$, $\tilde{N} = \lceil\log_b(\frac{bp^*}{c_{\tau_0}\rho \varepsilon})\rceil$, and $\theta^{(l)} = \frac{\delta^{(l)}}{2\log(m+1)}$. By Theorem~\ref{thm: main||-MRMtheorem}, it suffices to show that $K^{(l_0)}_{\mathtt{smooth}}$ and $\sum_{l = l_0 + 1}^{\Tilde{N}} K^{(l)}_{\mathtt{smooth}}$ are bounded by $O(1 / \varepsilon).$ 
    Recall that $\mathtt{smooth}^{(l)}$ corresponds to Nesterov's accelerated method applied to the smoothed objective
    $$
    \Phi_{\tau^{(l)}_i, \theta^{(l)}}(y) \defeq \theta^{(l)} \log\left(\exp\left(\frac{(\tau^{(l)}_i f)^{\Gamma, e_0}}{\theta^{(l)}}\right) + \sum_{j=1}^m \exp\left(\frac{\varphi_{S_j}(y)}{\theta^{(l)}}\right)\right).
    $$

    First, we observe that all of the components $(\tau^{(l)}_i f)^{\Gamma, e_0}, \varphi_{S_1}, \dots, \varphi_{S_m}$ are all $L$-smooth and $M$-Lipschitz where $M = 1/R$ and $L = \max\{(1 + \frac{D_0}{R_0})^3\tau_0\beta, \frac{R + \beta D^2}{R^3}\}.$
    The smoothness and Lipschitz continuity of each identifier is verified below in Lemma~\ref{lem: HuberGaugeSmoothing}. The $(1 + \frac{D_0}{R_0})^3\tau_0\beta$-smoothness of $(\tau^{(l)}_i f)^{\Gamma, e_0}$ follows from~\cite[Corollary 1]{radial2} and noting $\tau^{(l)}_i\leq \tau_0$. The $1/R$-Lipschitz continuity of $(\tau^{(l)}_i f)^{\Gamma, e_0}$ follows from~\eqref{implication:RLipschitzOfConvexDual}. From these bounds, it follows that $\Phi_{\tau^{(l)}_i, \theta^{(l)}}$ is $\max\{(1 + \frac{R_0}{D_0})^3\tau_0\beta, \frac{R + \beta D^2}{R^3}\} + \frac{M^2}{\theta^{(l)}}$-smooth (see Appendix B of \cite{Beck2012}). Hence
    \begin{align*}
        K_{\mathtt{smooth}}&\leq 2\sqrt{\frac{2LD^2}{\delta^{(l)}/ \rho}+\frac{4M^2D^2\log(m+1)}{(\delta^{(l)}/ \rho)^2}}\\
        &\leq 2 \sqrt{\frac{2LD^2}{b} + 4M^2\log(m+1)D^2}\ \frac{\rho}{\delta^{(l)}}
    \end{align*}
    where the second inequality uses that all $l \geq l_0$ have $\frac{\delta^{(l)}}{\rho} < 1/b.$

    From this, it follows that $K_{\mathtt{smooth}}^{(l_0)} \leq  2 \sqrt{\frac{2LD^2}{b} + 4M^2\log(m+1)D^2}\ b^2$ is constant with respect to $\varepsilon$ as $1 / \delta^{(l_0)} \leq b^2/\rho.$ For $l > l_0,$ we have $1 / \delta^{(l)} = b^l \leq \frac{b}{\rho}\left(\frac{bp^*}{c_{\tau_0}}\right)\frac{1}{b^{\Tilde{N} - l}}\frac{1}{\varepsilon}.$ Therefore, the total iteration bound of Theorem~\ref{thm: main||-MRMtheorem} scales with $\varepsilon$ as
     \begin{align*}
     \sum_{l= l_0+1}^{\Tilde{N}}K_{\mathtt{smooth}}^{(l)} 
     &\leq 2 \sqrt{\frac{2LD^2}{b} + 4M^2\log(m+1)D^2} \left[ b\left(\frac{bp^*}{c_{\tau_0}}\right) \sum_{l= 0}^{\infty}1/b^l\right]\frac{1}{\varepsilon} = O(1/\varepsilon). 
     \end{align*}

\subsection{Proof of Corollary~\ref{cor: genGradRateWith||-MRM}}
    Note that $\gauge{S_1}{e_1}^2, \dots, \gauge{S_m}{e_m}^2$ are all $2\frac{R + \beta D^2}{R^3}$-smooth by \cite[Corollary 3.2]{liu2023gauges}. In addition, $(\tau^{(l)}_i f)^{\Gamma, e_0}$ is $(1 + \frac{R_0}{D_0})^3\tau^{(l)}_i\beta$-smooth by \cite[Corollary 1]{radial2}. Noting $\tau^{(l)}_i \leq \tau_0$, it follows that $(\tau^{(l)}_i f)^{\Gamma, e_0}, \gauge{S_1}{e_1}^2, \dots, \gauge{S_m}{e_m}^2$ are all $L=\max\{(1 + \frac{D_0}{R_0})^3\tau_0\beta, \frac{2(R + \beta D^2)}{R^3}\}$-smooth. Therefore, Theorem 2.3.5 of~\cite{nesterov-textbook} ensures that $K^{(l)}_{\mathtt{genGrad}} \leq 2\sqrt{LD_0^2} \cdot \sqrt{\frac{\rho}{\delta^{(l)}}}.$
    
    Since $1/ \delta^{(l_0)} \leq b^2/\rho,$ it follows that $K^{(l_0)}_{\mathtt{genGrad}} \leq 2\sqrt{LD_0^2}\ b$ is constant with respect to $\varepsilon$.
    Now, let $\Bar{\varepsilon} = \frac{c_{\tau_0}\rho \varepsilon}{bp^*}$ and $\Tilde{N} = \lceil\log_b(1/\Bar{\varepsilon})\rceil.$ For $l > l_0,$ we have $1 / \delta^{(l)} = b^l \leq \frac{b}{\rho}\left(\frac{bp^*}{c_{\tau_0}}\right)\frac{1}{b^{\Tilde{N} - l}}\frac{1}{\varepsilon}.$ Therefore, the total iteration bound of Theorem~\ref{thm: main||-MRMtheorem} scales with $\varepsilon$ as
    \begin{align*}
        \sum_{l = l_0+1}^{\Tilde{N}}K^{(l)}_{\mathtt{genGrad}} &\leq 2\sqrt{LD_0^2}\left[\sqrt{b}\left(\frac{bp^*}{c_{\tau0}}\right)^{\frac{1}{2}}\sum_{l = 0}^{\infty}(\sqrt{b})^{-l}\right]\frac{1}{\sqrt{\varepsilon}} = O(1 / \sqrt{\varepsilon}).
    \end{align*}

\section{Smoothness of the identifiers in Corollary~\ref{cor: smoothRateWith||-MRM}}

    \begin{lemma}\label{lem: HuberGaugeSmoothing}
    Let $S$ be convex and $e \in \interior S.$ Then $\varphi_{S} : \varspace \to \R,$ defined by
    $$
    \varphi_{S}(x) \defeq \begin{cases} \gauge{S}{e}(x) & \text{if } \gauge{S}{e}(x) > 1 \\
    \frac{1}{2}\gauge{S}{e}^2(x) + \frac{1}{2} & \text{otherwise}
    \end{cases},
    $$
    is convex and $1/R_e(S)$-Lipschitz. If $\frac{1}{2}\gauge{S}{e}^2$ is $L$-smooth, $\varphi_{S}$ is $L$-smooth. In particular, if $S$ is $\beta$-smooth, then $\varphi_{S}$ is $\frac{R_e(S) + \beta D_e(S)^2}{R_e(S)^3}$-smooth for $D_e(S) \defeq \sup\{\|x - e\| \mid x \in S\}$.
    \end{lemma}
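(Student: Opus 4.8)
The plan is to realize $\varphi_S$ as a one-dimensional ``Huber-type'' reshaping of the gauge $\gamma := \gauge{S}{e}$ and transport the needed regularity through this reshaping. We may assume $S$ is closed, since replacing $S$ by its closure changes neither $\gauge{S}{e}$ nor $R := R_e(S)$, and $R>0$ because $e\in\interior S$. Let $\psi\colon[0,\infty)\to\R$ be given by $\psi(t)=\tfrac12 t^2+\tfrac12$ for $t\le 1$ and $\psi(t)=t$ for $t>1$, so that $\varphi_S=\psi\circ\gamma$; one checks $\psi$ is $C^1$ with $\psi'(t)=\min\{t,1\}$, hence convex, nondecreasing, and $1$-Lipschitz on $[0,\infty)$. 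Equivalently, $\varphi_S=h\circ\phi$ with $\phi:=\tfrac12\gamma^2$ and $h\colon[0,\infty)\to\R$ given by $h(u)=u+\tfrac12$ for $u\le\tfrac12$ and $h(u)=\sqrt{2u}$ for $u>\tfrac12$; this $h$ is $C^1$ with $h'(u)=\min\{1,(2u)^{-1/2}\}\in(0,1]$, and $h'$ is nonincreasing, so $h$ is concave and nondecreasing.

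Convexity and Lipschitz continuity are then immediate: $\gamma$ is convex and $1/R$-Lipschitz by~\eqref{implication: GaugesAreLipschitz}, so $\varphi_S=\psi\circ\gamma$ is convex (a convex nondecreasing function of a convex function) and $1/R$-Lipschitz (a $1$-Lipschitz function of a $1/R$-Lipschitz one).

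The substance is the smoothness claim, so assume $\phi=\tfrac12\gamma^2$ is $L$-smooth; note $\phi$ is also convex, being $\tfrac12 t^2$ (convex and nondecreasing on $[0,\infty)$) composed with the convex nonnegative $\gamma$. Since $h\in C^1$ and $\phi\in C^1$, we get $\varphi_S\in C^1$ with $\nabla\varphi_S(x)=h'(\phi(x))\nabla\phi(x)$. I would invoke the standard equivalence that a convex $C^1$ function has $L$-Lipschitz gradient exactly when $\tfrac L2\|\cdot\|^2-\varphi_S$ is convex (cf.~\cite{nesterov-textbook}), which reduces to the monotonicity estimate
\[
\langle h'(\phi(x))\nabla\phi(x)-h'(\phi(y))\nabla\phi(y),\,x-y\rangle \le L\|x-y\|^2 \qquad \forall\, x,y .
\]
By symmetry assume $\phi(x)\ge\phi(y)$, so $a:=h'(\phi(x))\le b:=h'(\phi(y))$ with $a,b\in(0,1]$. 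Writing the left-hand side as $as+(a-b)\tau$ with $s:=\langle\nabla\phi(x)-\nabla\phi(y),x-y\rangle$ and $\tau:=\langle\nabla\phi(y),x-y\rangle$, convexity and $L$-smoothness of $\phi$ give $0\le s\le L\|x-y\|^2$, and convexity gives $\langle\nabla\phi(x),x-y\rangle\ge\phi(x)-\phi(y)\ge 0$. If $\tau\ge 0$ then $(a-b)\tau\le 0$, so the left-hand side is at most $as\le s\le L\|x-y\|^2$. If $\tau<0$ then $s=\langle\nabla\phi(x),x-y\rangle-\tau\ge|\tau|$, so $(a-b)\tau=(b-a)|\tau|\le(b-a)s$, giving left-hand side $\le as+(b-a)s=bs\le s\le L\|x-y\|^2$. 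Thus $\tfrac L2\|\cdot\|^2-\varphi_S$ is convex, and together with convexity of $\varphi_S$ this shows $\varphi_S$ is $L$-smooth.

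For the final assertion, if $S$ is $\beta$-smooth (hence bounded with $D_e(S)<\infty$), then~\cite[Corollary 3.2]{liu2023gauges} ensures $\gamma^2$ is $2\tfrac{R+\beta D_e(S)^2}{R^3}$-smooth, i.e.\ $\phi=\tfrac12\gamma^2$ is $\tfrac{R+\beta D_e(S)^2}{R^3}$-smooth; applying the previous paragraph with $L=\tfrac{R+\beta D_e(S)^2}{R^3}$ finishes the proof. The only delicate point is the monotonicity estimate, and within it the case $\tau<0$: the useful observation is that $s\ge|\tau|$ (because $\langle\nabla\phi(x),x-y\rangle\ge 0$), which lets the potentially ``bad'' term $(a-b)\tau$ be absorbed into $s$; everything else is routine bookkeeping about $\psi$, $h$, and the composition rules for convex functions.
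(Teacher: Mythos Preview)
Your proof is correct and takes a genuinely different route from the paper's. The paper establishes the identity
\[
\varphi_S(x) = \tfrac{1}{2} + \inf_{y \in \varspace}\Big\{\gauge{S}{e}(y) + \tfrac{1}{2}\gauge{S}{e}^2(e + x - y)\Big\},
\]
i.e.\ recognizes $\varphi_S-\tfrac12$ as an infimal convolution of $\gauge{S}{e}$ with $\tfrac12\gauge{S}{e}^2(e+\cdot)$, and then invokes the general fact that infimal convolution with an $L$-smooth convex function yields an $L$-smooth function. Establishing that identity requires finding the minimizer $y$ explicitly (via first-order conditions and positive homogeneity of the gauge), which is the bulk of the paper's proof. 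You instead write $\varphi_S = h\circ\phi$ with $\phi=\tfrac12\gauge{S}{e}^2$ and $h$ concave, nondecreasing, $C^1$ with $h'\in(0,1]$, and verify the monotonicity inequality $\langle\nabla\varphi_S(x)-\nabla\varphi_S(y),x-y\rangle\le L\|x-y\|^2$ directly by a short case split; together with the convexity already obtained from the $\psi\circ\gamma$ representation, this gives $L$-smoothness. The paper's route is slicker once the inf-convolution formula is in hand, but your argument avoids that computation entirely and isolates the structural fact actually at work: post-composing an $L$-smooth convex function with a concave nondecreasing $C^1$ map whose derivative lies in $(0,1]$ cannot increase the smoothness constant.

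One small correction: $\beta$-smoothness of $S$ does not by itself force boundedness (a halfspace is $0$-smooth), so your parenthetical ``hence bounded with $D_e(S)<\infty$'' is not justified. The paper's proof is careful to add the hypothesis $D_e(S)<\infty$ explicitly before invoking \cite[Corollary~3.2]{liu2023gauges}; you should do the same (and note that when $D_e(S)=\infty$ the claimed constant is vacuous anyway).
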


    \begin{proof}
        We will show that $\varphi_S(x) = \frac{1}{2} + \inf_{y \in \varspace} \gauge{S}{e}(y) + \frac{1}{2}\gauge{S}{e}^2(e + x - y).$ Note that this will immediately impy $\varphi_S$ is convex and as smooth as $\frac{1}{2}\gauge{S}{e}^2$ because infimal convolutions preserve convexity and smoothness. Lipschitz continuity is also immediate as the subgradients of $\varphi_S$ and $\gauge{S}{e}$ are bounded above by $1/R_e(S).$

        It remains to show $\varphi_S(x) = \frac{1}{2} + \inf_{y \in \varspace} \gauge{S}{e}(y) + \frac{1}{2}\gauge{S}{e}^2(e + x - y).$ We prove that for any $\mu > 0,$
        \begin{equation}\label{eq: GaugeInfConvolution}
            \inf_{y \in \varspace} \gauge{S}{e}(y) + \frac{1}{2\mu}\gauge{S}{e}^2(e + x - y) = \begin{cases} \gauge{S}{e}(x) - \frac{\mu}{2} & \text{if } \gauge{S}{e}(x) > \mu \\
            \frac{1}{2\mu}\gauge{S}{e}^2(x) & \text{otherwise}.
            \end{cases}
        \end{equation}
        By the sum rule and chain rule, which apply as $\gauge{S}{e}$ and $\gauge{S}{e}^2$ are convex and real-valued, the necessary and sufficient condition to attain the infimum above is
        $$
        0 \in \partial \gauge{S}{e}(y) - \frac{1}{\mu} \gauge{S}{e}(e + x - y)\partial \gauge{S}{e}(e + x - y).
        $$ 
        The result in \eqref{eq: GaugeInfConvolution} holds because $y = e$ or $y = e + \left[1 - \frac{\mu}{\gauge{S}{e}(x)}\right](x - e)$ satisfies the sufficient condition accordingly as $\gauge{S}{e}(x) \leq \mu$ or $\gauge{S}{e}(x) > \mu.$ Indeed, since $0 \in \partial \gauge{S}{e}(e)$ and $\partial \gauge{S}{e}(x) \subset \partial \gauge{S}{e}(e)$ for any $x,$ it follows from convexity of the subdifferential that $\frac{1}{\mu}\gauge{S}{e}(x)\partial \gauge{S}{e}(x) \subset \partial \gauge{S}{e}(e)$ if $\gauge{S}{e}(x) \leq \mu.$ Therefore, $y = e$ attains the infimum if $\gauge{S}{e}(x) \leq \mu.$ 
        
        Now suppose $\gauge{S}{e}(x) > \mu$ and let $y = e + \left[1 - \frac{\mu}{\gauge{S}{e}(x)}\right](x - e).$ Recall that for any $\alpha > 0,$ we have $\gauge{S}{e}(e + \alpha (x - e)) = \alpha \gauge{S}{e}(x)$ and $\partial \gauge{S}{e}(e + \alpha(x - e)) = \partial \gauge{S}{e}(x).$ Noting that $e + x - y = e + \frac{\mu}{\gauge{S}{e}(x)}(x - e),$ we conclude that 
        $$
        \gauge{S}{e}(e + x - y) = \frac{\mu}{\gauge{S}{e}(x)}\gauge{S}{e}(x) = \mu \quad \text{and} \quad \partial \gauge{S}{e}(y) = \partial \gauge{S}{e}(x) = \partial \gauge{S}{e}(e + x - y).
        $$
        So, $\partial \gauge{S}{e}(y) = \frac{1}{\mu} \gauge{S}{e}(e + x - y)\partial \gauge{S}{e}(e + x - y),$ and $y$ satisfies the sufficient condition. 
    
        Plugging in the suitable minimizer $y = e$ or $y = e + \left[1 - \frac{\mu}{\gauge{S}{e}(x)}\right](x - e)$ accordingly into $\gauge{S}{e}(y) + \frac{1}{2\mu}\gauge{S}{e}^2(e + x - y)$ gives \eqref{eq: GaugeInfConvolution}. Finally, if $S$ is $\beta$-smooth and $D_e(S) < \infty,$ then $\frac{1}{2}\gauge{S}{e}^2$ is $\frac{R_e(S) + \beta D_e(S)^2}{R_e(S)^3}$-smooth by \cite[Corollary 3.2]{liu2023gauges}. Therefore, $\varphi_S$ is smooth with the same smoothness constant.
    \end{proof}

\end{document}